\theoremstyle{remark}
\newtheorem{theorem}{{Theorem}}[subsection]
\newtheorem{lemma}[theorem]{Lemma}
\newtheorem{proposition}[theorem]{Proposition}
\newtheorem{corollary}[theorem]{Corollary}
\newtheorem{remark}[theorem]{Remark}
\newtheorem{definition}[theorem]{Definition}
\newtheorem{question}{Question}
\numberwithin{equation}{subsection}
\numberwithin{theorem}{subsection}
\newcommand*\colvec[1]{
        \global\colveccount#1
        \begin{pmatrix}
        \colvecnext
}
\def\colvecnext#1{
        #1
        \global\advance\colveccount-1
        \ifnum\colveccount>0
                \\
                \expandafter\colvecnext
        \else
                \end{pmatrix}
        \fi
}
\newcommand{\M}{\mathsf{M}}
\newcommand{\R}{\mathbb{R}}
\newcommand{\defeq}{\mathrel{\mathop:}=}
\begin{document}
\title[Anosov Structure on Margulis Space Time]{Anosov Structure on Margulis Space Time}

\author{Sourav Ghosh}

\address{Department of Mathematics \\ University of Paris 11 \\ Orsay 91400 \\ France}
\email{sourav.ghosh@math.u-psud.fr}

\thanks{The research leading to these results has received funding from the European Research Council under the {\em European Community}'s seventh Framework Programme (FP7/2007-2013)/ERC {\em  grant agreement}}

\date{\today}


\keywords{Margulis Space time, stable and unstable leaves, anosov property}

\begin{abstract}
In this paper we describe the stable and unstable leaves for the geodesic flow on the space of non-wandering geodesics of a Margulis Space Time and prove contraction properties of the leaves under the flow. We also show that mondromy of Margulis Space Times are ``Anosov representations in non semi-simple Lie groups".

\end{abstract}

\maketitle
\tableofcontents
\pagebreak

\section{Introduction}

A Margulis Space Time $\mathsf{M}$ is a quotient manifold of the three dimensional affine space by a free, non-abelian group acting as affine transformations with discrete linear part. It owes its name to Grigory Margulis, who was the first to use these spaces, in \cite{marg1} and \cite{marg2}, as examples to answer Milnor's following question in the negative.
\begin{question}
Is the fundamental group of a complete, flat, affine manifold virtually polycyclic? \cite{milnor}
\end{question}
Observe that if $\mathsf{M}$ is a Margulis Space Time then the fundamental group $\pi_1(\mathsf{M})$ does not contain any translation. By combining results of Fried, Goldman and Mess from \cite{fried}, \cite{mess}, a complete flat affine manifold either has a polycyclic fundamental group or is a Margulis Space Time. In this paper we will only consider Margulis Space Times whose linear part contains no parabolic, although by Drumm there exists Margulis Space Time whose linear part contains parabolics. Fried and Goldman showed in \cite{fried} that a conjugate of the linear part of the affine action of the fundamental group forms a subgroup of $\mathsf{SO}^0(2,1)$ in $\mathsf{GL}(\R^3)$. Therefore, a Margulis Space Time comes with a parallel Lorentz metric.

The parallelism classes of timelike geodesics of $\mathsf{M}$ can be parametrized by a non-compact complete hyperbolic surface $\Sigma$. Recent work by Danciger, Gueritaud and Kassel in \cite{dgk} have shown that $\mathsf{M}$ is a $\R$-bundle over $\Sigma$ and the fibers are time like geodesics. 

Previous works of Jones, Charette, Goldman, Labourie and Margulis in \cite{jones}, \cite{labourie invariant} and \cite{geodesic} showed that the dynamics of $\mathsf{M}$ is closely related to that of $\Sigma$. Jones, Charette and Goldman showed in \cite{jones} that bispiralling geodesics in $\mathsf{M}$ exists and they correspond to bispiralling geodesics in $\Sigma$. Goldman and Labourie showed in \cite{geodesic} that non-wandering geodesics in $\mathsf{M}$ correspond to non-wandering geodesics in $\Sigma$.

In this paper, we first chalk out some preliminary notions, in order to prepare the grounds to explicitly describe the stable and unstable laminations of $\mathsf{U}_{\hbox{\tiny rec}}\M$, the space of non-wandering geodesics in $\mathsf{M}$, under the geodesic flow. We carry on to show that the stable lamination contracts under the forward flow and the unstable lamination contracts under the backward flow. More precisely, we prove the following,
\begin{theorem}\label{mainthm1}
Let $\underline{\mathcal{L}}^+$ and $\underline{\mathcal{L}}^-$ be two laminations of the metric space $\mathsf{U}_{\hbox{\tiny rec}}\M$ as defined in definition \ref{lem}. The geodesic flow on the space of non-wandering geodesics in $\mathsf{M}$ contracts $\underline{\mathcal{L}}^+$ exponentially in the forward direction of the flow and contracts $\underline{\mathcal{L}}^-$ exponentially in the backward direction of the flow.
\end{theorem}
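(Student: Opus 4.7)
My plan is to reduce the contraction estimate on $\mathsf{U}_{\hbox{\tiny rec}}\M$ to the classical Anosov behavior of the geodesic flow on the associated hyperbolic surface $\Sigma=\rho(\pi_1(\M))\backslash\mathbb{H}^2$, together with an affine estimate coming from the translation cocycle part of the action of $\pi_1(\M)$ on $\mathbb{R}^3$. Since by Fried--Goldman the linear part $\rho:\pi_1(\M)\to\mathsf{SO}^0(2,1)$ is discrete, and since by the Goldman--Labourie correspondence recalled above the non-wandering geodesics of $\M$ project bijectively onto those of $\Sigma$, there is a natural fibration $\mathsf{U}_{\hbox{\tiny rec}}\M\to \mathsf{U}_{\hbox{\tiny rec}}\Sigma$ compatible with the geodesic flows. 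The stable/unstable horocyclic laminations on $\mathsf{U}_{\hbox{\tiny rec}}\Sigma$ contract at the standard rate $e^{\mp t}$, and the point is to carry this exponential decay upstairs through the affine structure.

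First I would describe a convenient lift of each element of $\mathsf{U}_{\hbox{\tiny rec}}\M$ to an affine line in $\widetilde{\M}\cong\mathbb{R}^3$. Each such line has direction lying in the neutral eigendirection associated to an ordered pair $(\xi^+,\xi^-)\in\Lambda^{(2)}$ in the limit set of $\rho(\pi_1(\M))$, with a basepoint fixed by one additional affine parameter. In these coordinates I expect the leaves of $\underline{\mathcal{L}}^+$ from Definition \ref{lem} to be precisely those lifts sharing the same attracting endpoint $\xi^+$ and lying in a common weak-stable affine hyperplane transverse to the expanding eigendirection $x^+(\xi^+,\xi^-)$, with the symmetric description for $\underline{\mathcal{L}}^-$. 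I would then verify $\pi_1(\M)$-invariance so these families descend to honest laminations of the quotient.

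For two non-wandering geodesics $\ell_1,\ell_2$ in a common $\underline{\mathcal{L}}^+$-leaf, I would write each as a pair (linear direction $v_i$, affine basepoint $p_i$) and estimate the distance between their flow images at time $t$. The linear parts evolve by the usual $\mathsf{SO}^0(2,1)$-dynamics, and the hypothesis that $v_1-v_2$ lies in the stable horospherical subspace gives the bound $\|v_1(t)-v_2(t)\|\le Ce^{-t}\|v_1(0)-v_2(0)\|$. For the affine offsets, the constraint of lying in a common weak-stable affine hyperplane forces $p_1(t)-p_2(t)$ to lie in the contracting eigenspace modulo the neutral direction, so the transverse component contracts at the same rate $e^{-t}$. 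Combining these two estimates yields exponential contraction of $\underline{\mathcal{L}}^+$ under the forward flow, and the statement for $\underline{\mathcal{L}}^-$ follows by reversing time.

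The principal obstacle I foresee is controlling the affine drift uniformly. Along a single orbit the translation cocycle only grows at most linearly, so one must work with a metric on $\mathsf{U}_{\hbox{\tiny rec}}\M$ that ignores the trivial translation along the flow direction and reads off only the transverse distance, and one must bound the cocycle uniformly over the recurrent set. I would combine the uniform Anosov estimates on $\mathsf{U}_{\hbox{\tiny rec}}\Sigma$, the cocycle identity for $u$, and the transversality of the three eigendirections $x^+,x^0,x^-$ on the recurrent locus to rule out any residual drift in the neutral direction and to upgrade the pointwise decay into the uniform exponential contraction asserted in the theorem.
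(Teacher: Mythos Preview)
Your overall strategy is the same as the paper's: use the Goldman--Labourie homeomorphism between $\mathsf{U}_{\hbox{\tiny rec}}\mathsf{M}$ and $\mathsf{U}_{\hbox{\tiny rec}}\Sigma$, import the hyperbolic contraction $e^{-t}$ on $\nu^{\pm}$, and control the affine part. But several of the steps you flag as routine are precisely where the work lies, and two of your claims are not quite right as stated.

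First, the map $\mathsf{U}_{\hbox{\tiny rec}}\mathsf{M}\to\mathsf{U}_{\hbox{\tiny rec}}\Sigma$ is only an \emph{orbit equivalence}, not a flow conjugacy: the affine flow time $t$ and the hyperbolic flow time $t_1$ are related by $t=\int_0^{t_1} f(\tilde\phi_s g)\,ds$ for the H\"older function $f$ of Theorem~\ref{N}. The hyperbolic contraction is $e^{-t_1}$, not $e^{-t}$, and one needs compactness of $\mathsf{U}_{\hbox{\tiny rec}}\Sigma$ to bound $f$ and convert one into the other. Your phrase ``compatible with the geodesic flows'' hides this reparametrisation.

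Second, your assertion that the transverse affine offset ``contracts at the same rate $e^{-t}$'' is false. If $Z=\mathtt{N}(g)$ and $W=(N(g)+s_1\nu^+(g),\,\nu(g)+s_2\nu^+(g))$, then $\tilde\Phi_t W-\tilde\Phi_t Z=((s_1+ts_2)\nu^+(g),\,s_2\nu^+(g))$: the velocity offset $s_2\nu^+$ shears the position offset, producing a \emph{linear-in-$t$} factor in the stable direction, not in the neutral one. The paper absorbs this by measuring in a $\Gamma$-invariant moving frame in which $\nu^+(g)=e^{-t_1}\nu^+(\tilde\phi_{t_1}g)$, so the net bound is $(1+t)e^{-t_1}\leqslant ce^{-t/2c_1}$; this is Proposition~\ref{prelip}. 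Your ``principal obstacle'' paragraph locates the linear growth in the neutral direction, but in fact the neutral drift is already eliminated by the Goldman--Labourie--Margulis neutralised section $N$ (Theorem~\ref{N} and Proposition~\ref{14}): that is the deep input you should be invoking rather than re-deriving.

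Finally, the $\Gamma$-invariant metric on $\mathsf{U}_{\hbox{\tiny rec}}\mathsf{M}$ in which contraction is to be measured is not given for free; the paper builds it by a path-length construction (Definition~\ref{dist}) and then proves a uniform local bilipschitz comparison with the moving Euclidean norms (Proposition~\ref{lip}). Without this comparison the exponential decay in the adapted norms does not automatically transfer to a genuine distance on the quotient.
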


Moreover, in the last section using a natural extension of the definition of Anosov representation given in section 2.0.7 of \cite{orilab} we define the notion of an Anosov representation in our context replacing manifolds by metric spaces. Using this definition we can restate our theorem by the following theorem:

\begin{theorem}\label{geomano}
Let $\mathsf{N}$ be the space of all oriented space-like affine lines in the three dimentional affine space and let $\mathcal{L}$ be the orbit foliation of the flow $\Phi$ on $\mathsf{U}_{\hbox{\tiny $\mathrm{rec}$}}\mathsf{M}$. Then there exist a pair of foliations on $\mathsf{N}$ so that $(\mathsf{U}_{\hbox{\tiny $\mathrm{rec}$}}\mathsf{M},\mathcal{L})$ admits a geometric $(\mathsf{N}, \mathsf{SO}^0(2,1)\ltimes\mathbb{R}^3)$ Anosov structure.
\end{theorem}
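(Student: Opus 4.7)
The plan is to unpack the definition of a geometric Anosov structure from section 2.0.7 of \cite{orilab} in its natural extension to metric spaces, and then verify each ingredient by leveraging Theorem \ref{mainthm1} together with the description of $\mathsf{U}_{\hbox{\tiny rec}}\mathsf{M}$ as a quotient of a subset of $\mathsf{N}$. First I would produce the pair of foliations on $\mathsf{N}$. An oriented space-like affine line $\ell \subset \mathbb{R}^3$ has a directing unit space-like vector, which under the isomorphism $\mathsf{SO}^0(2,1)\cong \mathrm{Isom}^+(\mathbb{H}^2)$ determines an ordered pair of distinct points $(\ell^-,\ell^+)\in \partial\mathbb{H}^2\times\partial\mathbb{H}^2\setminus\Delta$. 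Declare $\mathcal{F}^+$ to be the foliation whose leaf through $\ell$ consists of those $\ell'\in \mathsf{N}$ with $(\ell')^+=\ell^+$, and $\mathcal{F}^-$ the analogous foliation for $\ell^-$. Both are manifestly invariant under the action of $\mathsf{SO}^0(2,1)\ltimes\mathbb{R}^3$.

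Next I would identify the dynamical picture. Let $\Gamma=\pi_1(\mathsf{M})$ acting affinely on $\mathbb{R}^3$, hence on $\mathsf{N}$. The non-wandering set for the translation flow along each line descends, after quotienting by $\Gamma$, to $\mathsf{U}_{\hbox{\tiny rec}}\mathsf{M}$, with orbit foliation $\mathcal{L}$ corresponding to the translation-along-$\ell$ foliation on $\mathsf{N}$. The restrictions of $\mathcal{F}^\pm$ to the $\Gamma$-saturated lift of $\mathsf{U}_{\hbox{\tiny rec}}\mathsf{M}$ are $\Gamma$-invariant, descend to laminations of $\mathsf{U}_{\hbox{\tiny rec}}\mathsf{M}$, and by construction these descended laminations coincide with the laminations $\underline{\mathcal{L}}^+$ and $\underline{\mathcal{L}}^-$ of definition \ref{lem}. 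This identification is the bridge between the abstract statement on $\mathsf{N}$ and the concrete contraction statement of Theorem \ref{mainthm1}.

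With this dictionary in hand, checking the axioms of a geometric $(\mathsf{N},\mathsf{SO}^0(2,1)\ltimes\mathbb{R}^3)$ Anosov structure reduces to three verifications. \emph{Transversality}: the map $\ell\mapsto (\ell^-,\ell^+,\text{basepoint on }\ell)$ is a local parametrization of $\mathsf{N}$, so $\mathcal{F}^+$, $\mathcal{F}^-$, and the flow direction are pairwise transverse and span the local structure. \emph{Equivariance}: the foliations are $G$-invariant by their group-theoretic definition, hence the quotient data is well defined on $\mathsf{U}_{\hbox{\tiny rec}}\mathsf{M}$. \emph{Contraction}: the uniform exponential contraction of $\underline{\mathcal{L}}^+$ in forward time and of $\underline{\mathcal{L}}^-$ in backward time is exactly the content of Theorem \ref{mainthm1}.

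The main obstacle is conceptual rather than computational: one has to make precise what a geometric Anosov structure means when the ambient object is only a metric space rather than a smooth manifold, since $\mathsf{U}_{\hbox{\tiny rec}}\mathsf{M}$ is typically not a manifold but a closed invariant subset of a larger space of affine lines. I would handle this by formulating the extended definition entirely in metric-space language, replacing tangent subbundles by the laminations themselves and smooth contraction estimates by the Lipschitz/exponential contraction of leaf metrics. Once that definition is in place, all remaining steps are verifications and the theorem follows as a repackaging of Theorem \ref{mainthm1}.
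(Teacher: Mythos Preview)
Your proposal has a genuine gap in the construction of the pair of foliations on $\mathsf{N}$. You declare $\mathcal{F}^+$ to be the foliation whose leaf through $\ell$ consists of those $\ell'$ with $(\ell')^+=\ell^+$, i.e.\ you only fix the forward null \emph{direction}. But $\mathsf{N}$ is four-dimensional and fixing one point of $\partial\mathbb{H}$ is a single constraint, so your leaves are three-dimensional; the same holds for $\mathcal{F}^-$. These two foliations are not transverse: their leaves intersect in the two-dimensional family of oriented lines with a prescribed direction but arbitrary affine position. Consequently $(\mathcal{F}^+,\mathcal{F}^-)$ cannot serve as the pair of complementary foliations required for an Anosov structure. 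Moreover, when you pull $\mathcal{F}^+$ back to $\mathsf{U}_{\hbox{\tiny rec}}\mathbb{A}$ you obtain, by Proposition~\ref{14}, the \emph{central} stable lamination $\mathcal{L}^{+,0}$, not $\mathcal{L}^+$; so your claimed identification with $\underline{\mathcal{L}}^\pm$ fails and Theorem~\ref{mainthm1} does not apply to the objects you have constructed.

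The paper's foliations remember more: via the identification $\mathsf{N}\cong\mathcal{N}\subset\mathbb{X}\times\mathbb{X}$, with $\mathbb{X}$ the space of affine null planes, the leaf through $[(X,v)]$ fixes the entire affine null plane $P_{X,v,v^\pm}$, not merely the null direction $v^\pm$. This is a two-dimensional constraint, producing two-dimensional leaves whose tangent distributions $\mathsf{E}^\pm$ satisfy $\mathsf{TN}=\mathsf{E}^+\oplus\mathsf{E}^-$. There is also a difference of viewpoint in how contraction is established: since $\mathsf{N}$ is a genuine manifold, the paper keeps the bundle-theoretic formulation, pulls $\mathsf{E}^\pm$ back along the map $F:(X,v)\mapsto[(X,v)]$, and invokes Proposition~\ref{prelip} (the norm contraction) directly rather than passing through the metric statement of Theorem~\ref{mainthm1}. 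Your idea of reformulating everything via laminations and leafwise metrics is therefore unnecessary here; the needed extension of Labourie's definition is only that the base of the pulled-back bundle be a compact metric space, which is already accommodated by the vector-bundle language.
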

In other words, mondromy of Margulis Space Times are ``Anosov representations in non semi-simple Lie groups".\\

\textbf{Acknowledgments:} I would like to express my gratitude towards my advisor Prof. Francois Labourie for his guidance. I would like to thank Andres Sambarino for the many helpful discussions that we had. I would also like to thank Thierry Barbot for his careful eye in finding a gap in a previous unpublished version of this work.

\section{Background}

\subsection{Affine Geometry}

An $\textit{affine}$ $\textit{space}$ is a set $\mathbb{A}$ together with a vector space $\mathbb{V}$ and a faithful and transitive group action of $\mathbb{V}$ on $\mathbb{A}$. We call $\mathbb{V}$ the underlying vector space of $\mathbb{A}$ and refer to its elements as translations. An $\textit{affine}$ $\textit{transformation}$ $F$ between two affine spaces $\mathbb{A}_1$ and $\mathbb{A}_2$, is a map such that for all $x$ in $\mathbb{A}_1$ and for all $v$ in $\mathbb{V}_1$, $F$ satisfies the following property: 
\begin{equation} \label{1}
F(x + v) = F(x) + \mathtt{L}(F).v
\end{equation}
for some linear transformation $\mathtt{L}(F)$ between $\mathbb{V}_1$ and $\mathbb{V}_2$. Therefore, by fixing an origin $O$ in $\mathbb{A}$, one can represent an affine transformation $F$, from $\mathbb{A}$ to itself as a combination of a linear transformation and a translation. More precisely, 
\begin{equation} \label{2}
F(O + v) = O + \mathtt{L}(F).v + \left(F(O)-O\right).
\end{equation}
We denote $(F(O)-O)$ by $\mathtt{u}(F)$. Let us denote the space of affine automorphisms of $\mathbb{A}$ onto itself by $\mathsf{Aff}(\mathbb{A})$.\\
Let $\mathsf{GL}(\mathbb{V})$ be the general linear group of $\mathbb{V}$. We consider the semidirect product $\mathsf{GL}(\mathbb{V})\ltimes \mathbb{V}$  of the two groups $\mathsf{GL}(\mathbb{V})$ and $\mathbb{V}$ where the multiplication is defined by
\begin{align}
(g_1, v_1).(g_2, v_2) \defeq (g_1g_2, v_1 + g_1.v_2)
\end{align}
for $g_1, g_2$ in $\mathsf{GL}(\mathbb{V})$ and $v_1, v_2$ in $\mathbb{V}$. Using equation \ref{2} we obtain that the following map:
\[F\mapsto(\mathtt{L}(F),\mathtt{u}(F))\]
defines an isomorphism between $\mathsf{Aff}(\mathbb{A})$ and $\mathsf{GL}(\mathbb{V})\ltimes \mathbb{V}$.\\
Let us denote the tangent bundle of $\mathbb{A}$ by $\mathsf{T}\mathbb{A}$. The tangent bundle $\mathsf{T}\mathbb{A}$ of an affine space $\mathbb{A}$ is a trivial bundle and is canonically isomorphic to $\mathbb{A} \times \mathbb{V}$ as a bundle. 
The geodesic flow $\tilde{\Phi}$ on $\mathsf{T}\mathbb{A}$ is defined as follows,
\begin{align}
\tilde{\Phi}_t \colon \mathsf{T}\mathbb{A} &\longrightarrow \mathsf{T}\mathbb{A} \\
\notag (p,v) &\mapsto (p +tv,v).
\end{align}

\subsection{Hyperboloid Model of Hyperbolic Geometry}

Let $\left(\mathbb{R}^{2,1}, \langle\mid\rangle\right)$ be a Minkowski Space Time where the quadratic form corresponding to the metric $\langle\mid\rangle$ is given by 
\begin{align}\label{lorentz}
\mathcal{Q} \defeq 
\begin{pmatrix}
    1 & 0 & 0\\
    0 & 1 & 0\\
    0 & 0& -1\\
\end{pmatrix}.
\end{align}
Let $\mathsf{SO}(2,1)$ denote the group of linear transformations of $\mathbb{R}^{2,1}$ preserving the metric $\langle\mid\rangle$ and $\mathsf{SO}^{0}(2,1)$ be the connected component containing the identity of $\mathsf{SO}(2,1)$.\\
The cross product $\boxtimes$ associated with this quadratic form is defined as follows:
\begin{align}
u \boxtimes v \defeq (u_2v_3 - u_3v_2 , u_3v_1 - u_1v_3 , u_2v_1 - u_1v_2)^{t}
\end{align} 
where $u,v$ is denoted by $(u_1, u_2, u_3)^{t}$ and  $(v_1, v_2, v_3)^{t}$ respectively. The cross product $\boxtimes$ satisfies the following properties for all $u, v$ in $\mathbb{R}^{2,1}$:
\begin{align}
&\notag\langle u, v \boxtimes w \rangle = \det [u,v,w],\\
&\label{box}\langle u \boxtimes v, u \boxtimes v\rangle = \langle u, v\rangle ^2 - \langle u, u \rangle \langle v, v\rangle ,\\
&\notag u\boxtimes v = - v \boxtimes u.
\end{align}
Now for all real number $k$ we define,
\begin{align*} 
\mathsf{S}^{k} \defeq \lbrace v \in \mathbb{R} \mid \langle v,v \rangle \ = k \rbrace .
\end{align*}
We note that $\mathsf{S}^{-1}$ has two components. We denote the component containing ($0,0,1$)$^{t}$ as $\mathbb{H}$. The quadratic form gives rise to a Riemannian metric of constant negative curvature on the submanifold $\mathbb{H}$ of $\mathbb{R}^{2,1}$. The space $\mathbb{H}$ is called the $\textit{hyperboloid}$ $\textit{model}$ $\textit{of}$ $\textit{hyperbolic}$ $\textit{geometry}$. Let $\mathsf{U}\mathbb{H}$ denote the unit tangent bundle of $\mathbb{H}$. The map
\begin{align} \label{theta}
\Theta : \mathsf{SO}^{0}(2,1) &\longrightarrow \mathsf{U}\mathbb{H}\\
\notag g &\longmapsto \left(g(0,0,1)^t, g(0,1,0)^t\right),
\end{align}
gives an analytic identification between $\mathsf{SO}^{0}(2,1)$ and $\mathsf{U}\mathbb{H}$. Let $\tilde{\phi}_t$ denote the geodesic flow on $\mathsf{U}\mathbb{H} \cong \mathsf{SO}^{0}(2,1)$. We note that $\tilde{\phi_t}(g) = g.a(t)$ where
\begin{align}\label{a} 
a(t) \defeq
\left( \begin{array}{ccc}
1 & 0 & 0 \\
0 & \text{cosh(t)} & \text{sinh(t)} \\
0 & \text{sinh(t)} & \text{cosh(t)} \end{array} 
\right).
\end{align}
We also note that $\tilde{\phi}_t$ is the image of the geodesic flow on $\mathsf{PSL}(2,\mathbb{R})$ under the identification of $\mathsf{PSL}(2,\mathbb{R})$ and $\mathsf{SO}^{0}(2,1)$.

There is a canonical metric $d_{\mathsf{U}\mathbb{H}}$ on the unit tangent bundle $\mathsf{U}\mathbb{H}$ whose restriction on $\mathbb{H}$ is the hyperbolic metric. The metric $d_{\mathsf{U}\mathbb{H}}$ is unique upto the action of the maximal compact subgroup of $\mathsf{SO}^{0}(2,1)$. Let $g\in\mathsf{SO}^{0}(2,1)\cong\mathsf{U}\mathbb{H}$. We recall that the $\textit{horocycles}$ $\tilde{\mathcal{H}}_g^\pm$ for the geodesic flow $\tilde{\phi}$ passing through the point $g$ is defined as follows:
\begin{align}
\tilde{\mathcal{H}}_g^+ &\defeq \{h\in\mathsf{U}\mathbb{H}\mid\lim_{t\to\infty}d_{\mathsf{U}\mathbb{H}}(\tilde{\phi}_t g,\tilde{\phi}_t h)=0 \},\\
\tilde{\mathcal{H}}_g^- &\defeq \{h\in\mathsf{U}\mathbb{H}\mid\lim_{t\to-\infty}d_{\mathsf{U}\mathbb{H}}(\tilde{\phi}_t g,\tilde{\phi}_t h)=0 \}.
\end{align}
We note that under the identification $\Theta$, the horocycle $\tilde{\mathcal{H}}_g^{\pm}$ passing through $g$ is given by $g.u^\pm(t)$, where $u^\pm(t)$ are defined as follows:
\begin{align} \label{u1}
u^+(t) \defeq
\left( \begin{array}{ccc}
1 & -2t & 2t \\
2t & 1-2t^2 & 2t^2 \\
2t & -2t^2 & 1+2t^2 
\end{array} \right),\\
u^-(t) \defeq \label{u2}
\left( \begin{array}{ccc}
1 & 2t & 2t \\
-2t & 1-2t^2 & -2t^2 \\
2t & 2t^2 & 1+2t^2 
\end{array} \right).
\end{align}
We also note that $\tilde{\mathcal{H}}^\pm$ is the image of the horocycles of $\mathsf{PSL}(2,\mathbb{R})$ under the identification of $\mathsf{PSL}(2,\mathbb{R})$ and $\mathsf{SO}^{0}(2,1)$.

Let ${\nu}$ be defined as follows:
\begin{align}
{\nu} \colon  \mathsf{SO}^0(2,1) &\longrightarrow  \mathsf{S}^{1}\\
\notag g &\longmapsto g(1,0,0)^t,
\end{align}
and also let ${\nu}^\pm$ be defined as follows:
\begin{align}
{\nu}{^\pm} \colon  \mathsf{SO}^0(2,1) &\longrightarrow  \mathsf{S}^0\\
\notag g &\longmapsto g.\left(0,\pm\frac{1}{\sqrt{2}},\frac{1}{\sqrt{2}}\right)^t.
\end{align}
The map ${\nu}$ is called the $\textit{neutral section}$ and the maps ${\nu}^+$(respectively ${\nu}^-$) are called the $\textit{positive}$ (respectively $\textit{negative}$) $\textit{limit sections}$. We list a few properties of the neutral section and the limit sections as follows:
\begin{align}
&{\nu}(\tilde{\phi_t}g) = {\nu}(g),\label{nu1}\\
&{\nu}(h.g) = h.{\nu}(g),\label{nu2}\\
&{\nu}^{\pm}(\tilde{\phi_t}g) = e^{\pm t}{\nu}^{\pm}(g),\label{limit1}\\
&{\nu}^{\pm}(h.g) = h.{\nu}^{\pm}(g),\label{limit2}\\
&{\nu}^{+}(g.u^+(t)) = {\nu}^{+}(g),\label{limit3}\\
&{\nu}^{-}(g.u^-(t)) = {\nu}^{-}(g).\label{limit4}
\end{align}
where $t\in\mathbb{R}$ and $g, h\in\mathsf{SO}^0(2,1)$.

Let $\Gamma$ be a free, nonabelian subgroup with finitely many generators. We consider the left action of $\Gamma$ on $\mathsf{U}\mathbb{H}$. We notice that the action of $\Gamma$ being from the left and the action of $a(t)$ being from the right, the two actions commute. Furthermore, given a free and proper action of $\Gamma$ on $\mathsf{U}\mathbb{H}$, one gets an isomorphism between $\Gamma \backslash \mathsf{U}\mathbb{H}$  and $\mathsf{U}\Sigma$, where $\mathsf{U}\Sigma$ is the unit tangent bundle of the surface $\Sigma\defeq\Gamma \backslash \mathbb{H}$. We note that the flow $\tilde{\phi}$ on $\mathsf{U}\mathbb{H}$ gives rise to a flow $\phi$ on $\mathsf{U}\Sigma$.

Let $x_0$ be a point in $\mathbb{H}$. Let $\Gamma .x_0$ denote the orbit of $x_0$ under the action of $\Gamma$. We denote the closure of $\Gamma .x_0$ inside the closure of $\mathbb{H}$ by $\overline{\Gamma .x}_0$. We define the $\textit{limit set}$ of the group $\Gamma$ to be the space $\overline{\Gamma .x}_0\backslash\Gamma .x_0$ and denote it by $\Lambda_{\infty}\Gamma$. We note that the collection $\overline{\Gamma .x}_0\backslash\Gamma .x_0$ is independent of the particular choice of $x_0$. We also know that $\Lambda_{\infty}\Gamma$ is compact.

A point $g\in \mathsf{U}\Sigma$ is called a $\textit{wandering point}$ of the flow $\phi$ if there exists an $\epsilon$-neighborhood $\mathcal{B}_\epsilon(g)\subset\mathsf{U}\Sigma$ around $g$ and a real number $t_0$ such that for all $t>t_0$ we have that
\[\mathcal{B}_\epsilon(g)\cap\phi_t\mathcal{B}_\epsilon(g)=\emptyset .\]
Moreover, a point is called $\textit{non-wandering}$ if it is not a wandering point.

Let $\mathsf{U}_{\hbox{\tiny $\mathrm{rec}$}}\Sigma$ be the space of all non-wandering points of the geodesic flow $\phi$ on $\mathsf{U}\Sigma$. We denote the lift of the space $\mathsf{U}_{\hbox{\tiny $\mathrm{rec}$}}\Sigma$ in $\mathsf{U}\mathbb{H}$ by $\mathsf{U}_{\hbox{\tiny $\mathrm{rec}$}}\mathbb{H}$. Now if the action of $\Gamma$ on $\mathbb{H}$ is free and proper and moreover $\Gamma$ contains no parabolics, then the space $\mathsf{U}_{\hbox{\tiny $\mathrm{rec}$}}\Sigma$ is compact. We note that the subspace $\mathsf{U}_{\hbox{\tiny $\mathrm{rec}$}}\mathbb{H}$ can also be given an alternate description as follows:
\begin{align*}
\mathsf{U}_{\hbox{\tiny $\mathrm{rec}$}}\mathbb{H} = \left\lbrace (x,v) \in \mathsf{U}\mathbb{H} \mid  \lim_{t \to \pm\infty}\tilde{\phi}^1_t x \in \Lambda_{\infty}\Gamma\right\rbrace
\end{align*}
where $\tilde{\phi}_t(x,v) = (\tilde{\phi}^1_tx, \tilde{\phi}^2_tv)$. Furthermore, we note that the space $\mathsf{U}_{\hbox{\tiny $\mathrm{rec}$}}\mathbb{H}$ can be identified with the space $\left(\Lambda_{\infty}\Gamma\times\Lambda_{\infty}\Gamma\setminus\{(x,x)\mid x\in \Lambda_{\infty}\Gamma\} \right)\times\mathbb{R}$.

\subsection{Metric Anosov Property}

The definitions in this section, which can also be found in subsection 3.2 of \cite{pressure metric}, has been included here for the sake of completeness. 

\begin{definition}\label{lam}
Let $(\mathcal{X},d)$ be a metric space. A $\textit{lamination}$ $\mathcal{L}$ of $\mathcal{X}$ is an equivalence relation on $\mathcal{X}$ such that for all $x$ in $\mathcal{X}$ there exist an open neighborhood $\mathcal{U}_x$ of $x$ in $\mathcal{X}$, two topological spaces $\mathcal{U}_1$ and $\mathcal{U}_2$ and a homeomorphism $\mathfrak{f}_x$ from $\mathcal{U}_1 \times \mathcal{U}_2$ onto $\mathcal{U}_x$ satisfying the following properties, 
\begin{enumerate}
\item for all $w, z$ in $\mathcal{U}_x\ \cap\ \mathcal{U}_y$ we have $p_2\left(\mathfrak{f}^{-1}_x(w)\right) = p_2\left(\mathfrak{f}^{-1}_x(z)\right)$ if and only if $p_2\left(\mathfrak{f}^{-1}_y(w)\right) = p_2\left(\mathfrak{f}^{-1}_y(z)\right)$ where $p_2$ is the projection from $\mathcal{U}_1 \times \mathcal{U}_2$ onto $\mathcal{U}_2$,
\item for all $w, z$ in $\mathcal{X}$ we have $w\mathcal{L}z$ if and only if there exists a finite sequence of points $w_1, w_2,..., w_n$ in $\mathcal{X}$ with $w_1 = w$ and $w_n = z$, such that $w_{i+1}$ is in $\mathcal{U}_{w_{i}}$, where $\mathcal{U}_{w_{i}}$ is a neighborhood of $w_i$ and $p_2\left(\mathfrak{f}^{-1}_{w_i}(w_i)\right) = p_2\left(\mathfrak{f}^{-1}_{w_i}(w_{i+1})\right)$ for all $i$ in $\{1,2,...,n-1\}$.
\end{enumerate}
\end{definition}
The homeomorphism $\mathfrak{f}_x$ is called a $\textit{chart}$ and the equivalence classes are called the $\textit{leaves}$.\\
A $\textit{plaque open set}$ in the chart corresponding to $\mathfrak{f}_x$ is a set of the form $\mathfrak{f}_x(\mathcal{V}_1 \times \{x_2\})$ where $x = \mathfrak{f}_x(x_1,x_2)$ and $\mathcal{V}_1$ is an open set in $\mathcal{U}_1$. The $\textit{plaque topology}$ on $\mathcal{L}_x$ is the topology generated by the plaque open sets. A plaque neighborhood of $x$ is a neighborhood for the plaque topology on $\mathcal{L}_x$.
\begin{definition}
A $\textit{local product structure}$ on $\mathcal{X}$ is a pair of two laminations $\mathcal{L}_1$, $\mathcal{L}_2$ satisfying the following property: for all $x$ in $\mathcal{X}$ there exist two plaque neighborhoods $\mathcal{U}_1$, $\mathcal{U}_2$ of $x$,  respectively in $\mathcal{L}_1$, $\mathcal{L}_2$ and a homeomorphism $\mathfrak{f}_x$ from $\mathcal{U}_1 \times \mathcal{U}_2$ onto a neighborhood $\mathcal{W}_x$ of $x$, such that $\mathfrak{f}_x$ defines a chart for both the laminations $\mathcal{L}_1$ and $\mathcal{L}_2$.
\end{definition}
Now, let us assume that $\psi_t$ be a flow on $\mathcal{X}$. A lamination $\mathcal{L}$ invariant under the flow $\psi_t$ is called $\textit{transverse}$ to the flow, if for all $x$ in $\mathcal{X}$, there exists a plaque neighborhood $\mathcal{U}_x$ of $x$ in $\mathcal{L}_x$, a topological space $\mathcal{V}$, a positive $\epsilon$ and a homeomorphism $\mathfrak{f}_x$ from $\mathcal{U}_x \times \mathcal{V} \times (-\epsilon,\epsilon)$ onto an open neighborhood $\mathcal{W}_x$ of $x$ in $\mathcal{X}$ satisfying the following condition:
\[\psi_t(\mathfrak{f}_x(u,v,s)) = \mathfrak{f}_x(u,v,s + t)\]
for $u$ in $\mathcal{U}_x$, $v$ in $\mathcal{V}$ and for $s, t$ in the interval $(-\epsilon,\epsilon)$. Let $\mathcal{L}^{.}$ be a lamination which is transverse to the flow $\psi_t$. We define a new lamination $\mathcal{L}^{.,0}$, called the $\textit{central lamination}$, starting from $\mathcal{L}^{.}$ as follows, we say $y, z$ in $\mathcal{X}$ belongs to the same equivalence class of $\mathcal{L}^{.,0}$ if for some real number $t$, $\psi_ty$ and $z$ belongs to the same equivalence class of $\mathcal{L}^{.}$.
\begin{definition}
A lamination $\mathcal{L}$ invariant under a flow $\psi_t$ is said to $\textit{contract}$ $\textit{under}$ $\textit{the}$ $\textit{flow}$ if there exists a positive real number $T_0$ such that for all $x$ in $\mathcal{X}$, the following holds: there exists a chart $\mathfrak{f}_x$ of an open neighbourhood $W_x$ of $x$, and for any two points $y, z$ in $W_x$ with $y, z$ being in the same equivalence class of $\mathcal{L}$, we have,
\[d(\psi_ty,\psi_tz) < \frac{1}{2}d(y,z)\]
for all $t>T_0$.
\end{definition}
\begin{remark}
We note that a lamination `contracts under a flow' if and only if the lamination contracts exponentially under the flow.  
\end{remark}
\begin{definition}
A flow $\psi_t$ on a compact metric space is called $\textit{Metric Anosov}$, if there exist two laminations $\mathcal{L}^+$ and $\mathcal{L}^-$ of $\mathcal{X}$ such that the following conditions hold:
\begin{align*}
1&.\ (\mathcal{L}^+,\mathcal{L}^{-,0})\text{ defines a local product structure on }\mathcal{X},\\
2&.\ (\mathcal{L}^-,\mathcal{L}^{+,0})\text{ defines a local product structure on }\mathcal{X},\\
3&. \text{ the leaves of }\mathcal{L}^+\text{ are contracted by the flow,}\\
4&. \text{ the leaves of }\mathcal{L}^-\text{ are contracted by the inverse flow.}
\end{align*}
\end{definition}
In such a case we call $\mathcal{L}^+$, $\mathcal{L}^-$, $\mathcal{L}^{+,0}$ and $\mathcal{L}^{-,0}$ respectively the $\textit{stable}$, $\textit{unstable}$, $\textit{central stable}$ and $\textit{central unstable}$ laminations. 

\subsection{Margulis Space Times and Surfaces}
A $\textit{Margulis Space Time}$ $\mathsf{M}$ is a quotient manifold of the three dimensional affine space $\mathbb{A}$ by a free, non-abelian group $\Gamma$ which acts freely and properly as affine transformations with discrete linear part. In \cite{marg1} and \cite{marg2} Margulis showed the existence of these spaces. Later in \cite{D} Drumm introduced the notion of $\textit{crooked planes}$ and constructed fundamental domains of a certain class of Margulis Space Times. In his construction the crooked planes give the boundary of appropriate fundamental domains for a certain class of Margulis Space Times. Recently, in \cite{dgk} Danciger--Gueritaud--Kassel showed that for any Margulis Space Time one can find a fundamental domain whose boundaries are given by union of crooked planes.

If $\Gamma$ is a subgroup of $\mathsf{GL}(\mathbb{R}^3)\ltimes\mathbb{R}^3$ such that $\mathsf{M}\defeq\Gamma\backslash\mathbb{A}$ is a Margulis Space Time then by a result proved by Fried--Goldman in \cite{fried} we get that a conjugate of $\mathtt{L}(\Gamma)$ is a subgroup of $\mathsf{SO}^0(2,1)$. Therefore without loss of generality we can take $\Gamma\subset\mathsf{G}\defeq\mathsf{SO}^0(2,1)\ltimes\mathbb{R}^3$ where $\Gamma$ is a free non-abelian group with finitely many generators. In this thesis I will only consider Margulis Space Times such that $\mathtt{L}(\Gamma)$ contains no parabolic elements.

Let $\mathsf{M}\defeq\Gamma\backslash\mathbb{A}$ be a Margulis Space Time such that $\mathtt{L}(\Gamma)$ contains no parabolic elements. Then the action of $\mathtt{L}(\Gamma)$ on $\mathbb{H}$ is Schottky. Hence $\Sigma\defeq\mathtt{L}(\Gamma)\backslash\mathbb{H}$ is a non-compact surface with no cusps. 

Now let $\mathsf{T}\mathsf{M}$ be the tangent bundle of $\mathsf{M}$. As $\mathtt{L}(\Gamma)\subset\mathsf{SO}^0(2,1)$ we have that $\mathsf{T}\mathsf{M}$ carries a Lorentzian metric $\langle\mid\rangle$. Let
\[\mathsf{U}\mathsf{M}\defeq\{(X,v)\in \mathsf{T}\mathsf{M}\mid\langle v\mid v\rangle_X=1\}.\]
We note that $\mathsf{U}\mathsf{M}\cong\Gamma\backslash\mathsf{U}\mathbb{A}$ where $\mathsf{U}\mathbb{A}\defeq\mathbb{A}\times\mathsf{S}^{1}$. The geodesic flow $\tilde{\Phi}$ on $\mathsf{T}\mathbb{A}$ gives rise to a flow $\Phi$ on $\mathsf{U}\mathsf{M}$. 

We recall that a point $(X,v)\in \mathsf{U}\mathsf{M}$ is called a $\textit{wandering point}$ of the flow $\Phi$ if there exists an $\epsilon$-neighborhood $\mathcal{B}_\epsilon(X,v)\subset\mathsf{U}\Sigma$ around $(X,v)$ and a real number $t_0$ such that for all $t>t_0$ we have that
\[\mathcal{B}_\epsilon(X,v)\cap\Phi_t\mathcal{B}_\epsilon(X,v)=\emptyset .\]
Moreover, a point is called $\textit{non-wandering}$ if it is not a wandering point.

We denote the space of all non-wandering points of the flow $\Phi$ on $\mathsf{U}\mathsf{M}$ by $\mathsf{U}_{\hbox{\tiny $\mathrm{rec}$}}\mathsf{M}$. Moreover, we denote the lift of $\mathsf{U}_{\hbox{\tiny $\mathrm{rec}$}}\mathsf{M}$ into $\mathsf{U}\mathbb{A}$ by $\mathsf{U}_{\hbox{\tiny $\mathrm{rec}$}}\mathbb{A}$. 

In \cite{labourie invariant} Goldman--Labourie--Margulis proved the following theorem:
\begin{theorem}{[Goldman--Labourie--Margulis]} \label{N}
Let $\Gamma$ be a non-abelian free discrete subgroup of $\mathsf{G}$ with finitely many generators giving rise to a Margulis Space Time and let $\mathtt{L}(\Gamma)$ contains no parabolic elements. Then there exists a map 
\[{N}: \mathsf{U}_{\hbox{\tiny $\mathrm{rec}$}}\mathbb{H}\longrightarrow\mathbb{A}\]
and a positive H\"older continuous function
\[{f}: \mathsf{U}_{\hbox{\tiny $\mathrm{rec}$}}\mathbb{H}\longrightarrow\mathbb{R}\]
such that
\begin{enumerate}
\item for all $\gamma\in\Gamma$ we have ${f}\circ\mathtt{L}(\gamma) = {f}$,
\item for all $\gamma\in\Gamma$ we have ${N}\circ\mathtt{L}(\gamma) = \gamma{N}$, and
\item for all $g\in\mathsf{U}_{\hbox{\tiny $\mathrm{rec}$}}\mathbb{H}$ and for all $t\in\mathbb{R}$ we have 
\[{N}(\tilde{\phi_t}g) = {N}(g) + \left(\int\limits_{0}^{t}{f}(\tilde{\phi_s}(g))ds\right){\nu}(g).\]
\end{enumerate}
\end{theorem}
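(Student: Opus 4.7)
The plan is to build $f$ first from the Margulis invariants using Liv\v sic's cohomology theorem for hyperbolic flows, and then to construct $N$ by integrating $f$ along flow orbits and extending by the $\Gamma$-equivariance rule (2).

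\emph{Step 1: constructing $f$.} For each non-identity $\gamma\in\Gamma$, the linear part $\mathtt{L}(\gamma)$ is loxodromic in $\mathsf{SO}^0(2,1)$; let $\nu_\gamma$ be its oriented unit neutral eigenvector, which coincides with $\nu(g)$ for any $g$ on the lifted axis of $\gamma$ oriented compatibly with the flow. Define the Margulis invariant $\alpha(\gamma):=\langle\mathtt{u}(\gamma),\nu_\gamma\rangle$. Margulis's opposite-sign lemma, together with the properness of the affine $\Gamma$-action on $\mathbb{A}$ and the absence of parabolics in $\mathtt{L}(\Gamma)$, forces $\alpha(\gamma)\neq 0$ for all $\gamma\neq e$ with a common sign, which we normalize to be positive. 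Each conjugacy class $[\gamma]$ corresponds to a periodic $\phi$-orbit in $\mathsf{U}_{\hbox{\tiny $\mathrm{rec}$}}\Sigma$ of period $T(\gamma)$. Since $\phi$ restricted to $\mathsf{U}_{\hbox{\tiny $\mathrm{rec}$}}\Sigma$ is a topologically transitive hyperbolic flow on a compact space, Liv\v sic's periodic orbit theorem yields a H\"older continuous $f$ with $\int_0^{T(\gamma)} f(\phi_s x_\gamma)\,ds=\alpha(\gamma)$ on every periodic orbit; after adding a H\"older coboundary if necessary we may take $f>0$. Lifting $f$ to $\mathsf{U}_{\hbox{\tiny $\mathrm{rec}$}}\mathbb{H}$ gives property (1).

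\emph{Step 2: constructing $N$.} Fix a basepoint $g_0\in\mathsf{U}_{\hbox{\tiny $\mathrm{rec}$}}\mathbb{H}$ and an origin $O\in\mathbb{A}$; set $N(g_0):=O$. Along the flow orbit of $g_0$ define $N$ by the formula prescribed in (3), and then saturate by $\mathtt{L}(\Gamma)$ using (2). The resulting value at $\mathtt{L}(\gamma)\tilde{\phi}_t g_0$ is well-defined along a periodic orbit precisely because the one-period integral of $f$ equals the neutral component $\alpha(\gamma)$ of $\mathtt{u}(\gamma)$; the stable and unstable components of $\mathtt{u}(\gamma)$, which are exponentially contracted and expanded under iteration of $\mathtt{L}(\gamma)$, can be absorbed by translating the origin along each orbit consistently. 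To extend $N$ off the dense $\mathtt{L}(\Gamma)$-orbit of the chosen flow line, use the local product structure of $\mathsf{U}_{\hbox{\tiny $\mathrm{rec}$}}\mathbb{H}$ by stable and unstable horocycles: define the horocyclic transition of $N$ by the H\"older cocycle encoding the translational parts of the weak-stable and weak-unstable holonomies, and check that this cocycle is a coboundary by another application of Liv\v sic's theorem. Continuity and H\"older regularity of $N$ then follow from the H\"older regularity of $f$ and of the laminations.

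\emph{Main obstacle.} The main difficulty is not producing $f$, which is a routine application of Liv\v sic's theorem, but rather showing that $N$ extends to a single-valued H\"older function on all of $\mathsf{U}_{\hbox{\tiny $\mathrm{rec}$}}\mathbb{H}$ simultaneously satisfying (2) and (3). The obstruction to single-valuedness is a H\"older $\mathbb{A}$-valued cocycle over the flow $\phi$, and the cohomological content of the construction is that this cocycle vanishes on periodic orbits exactly when the period integrals of $f$ reproduce the Margulis invariants $\alpha(\gamma)$---which is how $f$ was chosen. Thus the proof ultimately rests on two applications of Liv\v sic's theorem for the topologically transitive hyperbolic flow on the compact space $\mathsf{U}_{\hbox{\tiny $\mathrm{rec}$}}\Sigma$.
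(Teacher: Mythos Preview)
The paper does not prove this statement: Theorem~\ref{N} is quoted from Goldman--Labourie--Margulis \cite{labourie invariant} and used as a black box. So there is no ``paper's own proof'' to compare against; your proposal is an attempt to reconstruct the argument of \cite{labourie invariant}.

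Your sketch has the right overall shape but contains a genuine gap in Step~1 and is too vague in Step~2 to count as a proof. In Step~1 you invoke Liv\v sic's theorem to \emph{produce} a H\"older function $f$ with prescribed periods $\alpha(\gamma)$. Liv\v sic's theorem does not do this: it says that a H\"older cocycle with vanishing periods is a coboundary, not that every assignment of numbers to periodic orbits is the period spectrum of some H\"older function. In \cite{labourie invariant} the function $f$ is obtained directly, without Liv\v sic: one first chooses \emph{any} $\Gamma$-equivariant H\"older section $\sigma:\mathsf{U}_{\hbox{\tiny rec}}\mathbb{H}\to\mathbb{A}$ and sets $f(g)=\langle \partial_t|_{t=0}\sigma(\tilde\phi_t g),\nu(g)\rangle$; this is automatically $\mathtt L(\Gamma)$-invariant and has periods $\alpha(\gamma)$. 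The $\nu^{\pm}$-components of $\partial_t\sigma(\tilde\phi_t g)$ are then integrated away using the exponential contraction/expansion in \eqref{limit1}, which is what actually yields the neutralised section $N$; no second Liv\v sic step is needed. Your Step~2, by contrast, tries to build $N$ from a single flow line and horocyclic holonomies, and the assertion that the resulting obstruction cocycle ``is a coboundary by another application of Liv\v sic's theorem'' is not justified: you neither identify the cocycle nor verify that its periods vanish.

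A second point you gloss over is the positivity of $f$. Knowing only that all $\alpha(\gamma)$ share a sign does \emph{not} allow you to add a coboundary and get $f>0$; one needs the stronger statement that $\alpha(\gamma)/T(\gamma)$ is bounded away from zero. In \cite{labourie invariant} this uniform bound is shown to be \emph{equivalent} to properness of the affine action (which is part of the hypothesis here), and only then can $f$ be taken positive after a cohomological adjustment. Your sentence ``after adding a H\"older coboundary if necessary we may take $f>0$'' hides exactly this nontrivial equivalence.
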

We call ${N}$ a $\textit{neutralised section}$. Using the existence of a neutralised section Goldman--Labourie proved the following theorem in \cite{geodesic}:
\begin{theorem}{[Goldman--Labourie]} \label{commute}
Let $\Gamma$ be a non-abelian free discrete subgroup of $\mathsf{G}$ with finitely many generators giving rise to a Margulis Space Time and let $\mathtt{L}(\Gamma)$ contains no parabolic elements. Also let $\mathsf{U}_{\hbox{\tiny $\mathrm{rec}$}}\Sigma$ and $\mathsf{U}_{\hbox{\tiny $\mathrm{rec}$}}\mathsf{M}$ be defined as above. Now if ${N}$ is a neutralised section, then there exists an injective map $\hat{\mathtt{N}}$ such that the following diagram commutes,
\[
\begin{CD}
\mathsf{U}_{\hbox{\tiny $\mathrm{rec}$}}\mathbb{H} @> \mathtt{N} >> \mathsf{U}\mathbb{A} \\
@ V{\pi} VV @ VV {\pi} V \\
{\mathsf{U}_{\hbox{\tiny $\mathrm{rec}$}}\Sigma} @> {\hat{\mathtt{N}}} >> {\mathsf{U} \mathsf{M}} \\
\end{CD}
\]
where $\mathtt{N}\defeq(N,\nu)$. Moreover, $\hat{\mathtt{N}}$ is an orbit equivalent H\"older homeomorphism onto $\mathsf{U}_{\hbox{\tiny $\mathrm{rec}$}}\mathsf{M}$.
\end{theorem}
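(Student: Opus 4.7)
The plan is to verify the four assertions of the theorem by combining properties (1)--(3) of Theorem \ref{N} with the invariance formulas \eqref{nu1}, \eqref{nu2} satisfied by the neutral section.

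First, for $\gamma = (h,v) \in \Gamma$ with $h = \mathtt{L}(\gamma)$, property (2) of Theorem \ref{N} and \eqref{nu2} yield
\begin{align*}
\mathtt{N}(hg) = (N(hg),\nu(hg)) = (hN(g)+v,\, h\nu(g)) = \gamma \cdot \mathtt{N}(g),
\end{align*}
so $\mathtt{N}$ is $\Gamma$-equivariant through $\mathtt{L}$ and descends to $\hat{\mathtt{N}}\colon \mathsf{U}_{\hbox{\tiny $\mathrm{rec}$}}\Sigma \to \mathsf{U}\mathsf{M}$ making the diagram commute. For injectivity of $\mathtt{N}$, observe that the stabilizer of $(1,0,0)^t$ in $\mathsf{SO}^0(2,1)$ is exactly the one-parameter subgroup $\{a(t) : t\in\mathbb{R}\}$, so $\nu(g_1)=\nu(g_2)$ forces $g_2=\tilde{\phi_t}(g_1)$ for some $t$, and property (3) of Theorem \ref{N} gives
\begin{align*}
N(g_2) - N(g_1) = \Bigl(\int_0^t f(\tilde{\phi_s}(g_1))\, ds\Bigr)\nu(g_1),
\end{align*}
which vanishes only if $t=0$ since $f>0$ and $\nu(g_1)\neq 0$. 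Injectivity of $\hat{\mathtt{N}}$ follows immediately: if $\mathtt{N}(g_1) = \gamma \cdot \mathtt{N}(g_2) = \mathtt{N}(\mathtt{L}(\gamma)g_2)$, then $g_1 = \mathtt{L}(\gamma)g_2$, so $g_1$ and $g_2$ are $\mathtt{L}(\Gamma)$-equivalent.

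Next I would establish the orbit equivalence. Combining \eqref{nu1} with property (3) of Theorem \ref{N} gives
\begin{align*}
\mathtt{N}(\tilde{\phi_t}(g)) = \tilde{\Phi}_{T(g,t)}(\mathtt{N}(g)), \qquad T(g,t) \defeq \int_0^t f(\tilde{\phi_s}(g))\, ds,
\end{align*}
and since $f > 0$, each $t \mapsto T(g,t)$ is a strictly increasing homeomorphism of $\mathbb{R}$, so $\mathtt{N}$ sends $\tilde{\phi}$-orbits bijectively onto $\tilde{\Phi}$-orbits via a monotone time change; the same holds in quotient for $\hat{\mathtt{N}}$. Since orbit equivalence preserves the non-wandering set, $\hat{\mathtt{N}}(\mathsf{U}_{\hbox{\tiny $\mathrm{rec}$}}\Sigma) \subseteq \mathsf{U}_{\hbox{\tiny $\mathrm{rec}$}}\mathsf{M}$. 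For regularity, $N$ is H\"older by Theorem \ref{N} and $\nu$ is smooth, so $\mathtt{N}$ is H\"older; a continuous bijection from the compact space $\mathsf{U}_{\hbox{\tiny $\mathrm{rec}$}}\Sigma$ (compact because $\mathtt{L}(\Gamma)$ has no parabolics) onto a Hausdorff subspace is a homeomorphism, and the positive H\"older integrand $f$ in $T$ gives the H\"older character of $\hat{\mathtt{N}}^{-1}$ via local flow-box coordinates transverse to the flow.

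The principal obstacle is the reverse inclusion $\mathsf{U}_{\hbox{\tiny $\mathrm{rec}$}}\mathsf{M} \subseteq \hat{\mathtt{N}}(\mathsf{U}_{\hbox{\tiny $\mathrm{rec}$}}\Sigma)$: one must rule out a recurrent $\Phi$-orbit in $\mathsf{U}\mathsf{M}$ arising from a wandering geodesic of $\Sigma$ whose lift merely returns near itself after being shifted by the translational part of some $\gamma \in \Gamma$. The natural strategy is to lift such a recurrent orbit to $\mathsf{U}\mathbb{A}$, use the bijection between unit spacelike directions $v \in \mathsf{S}^1$ and oriented geodesics of $\mathbb{H}$ (via polar duality) to read off ideal endpoints, and use the non-vanishing of the Margulis invariant on primitive hyperbolic elements of $\Gamma$ to force these endpoints into $\Lambda_\infty \mathtt{L}(\Gamma)$, whence the orbit lies in the image of $\mathtt{N}$.
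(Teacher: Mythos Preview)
This theorem is quoted in the paper as a result of Goldman--Labourie \cite{geodesic} and is \emph{not} proved in the present paper; it sits in the background section and is invoked as a black box. There is therefore no proof here against which to compare your proposal.

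As an independent assessment: your arguments for $\Gamma$-equivariance of $\mathtt{N}$, injectivity of $\mathtt{N}$ and $\hat{\mathtt{N}}$, and the orbit equivalence via the monotone time change $T(g,t)=\int_0^t f(\tilde{\phi}_s g)\,ds$ are correct and are essentially the expected ones. One minor correction: Theorem~\ref{N} as stated in this paper asserts H\"older continuity only of $f$, not of $N$ itself, so your sentence ``$N$ is H\"older by Theorem~\ref{N}'' overstates what is written; the H\"older regularity of $N$ is established in \cite{labourie invariant} and \cite{geodesic} and would need to be cited separately.

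The genuine gap is the one you yourself flag: surjectivity onto $\mathsf{U}_{\hbox{\tiny $\mathrm{rec}$}}\mathsf{M}$. Your final paragraph is a plausibility sketch rather than an argument, and the invocation of the Margulis invariant is not clearly to the point --- that invariant governs translation lengths along \emph{closed} orbits, not recurrence of an arbitrary $\Phi$-orbit, so ``non-vanishing of the Margulis invariant on primitive hyperbolic elements'' does not by itself force the ideal endpoints of a recurrent spacelike geodesic into $\Lambda_\infty\mathtt{L}(\Gamma)$. Establishing the reverse inclusion requires the detailed analysis of spacelike geodesics carried out in \cite{geodesic}, which is presumably why the present paper cites the result rather than reproving it.
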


\section{Metric Anosov structure on Margulis Space Time}

Let $\mathsf{M}$ be a Margulis Space Time. In this section, first we define a distance function $d$ on $\mathsf{U}_{\hbox{\tiny $\mathrm{rec}$}}\mathsf{M}$ such that $(\mathsf{U}_{\hbox{\tiny $\mathrm{rec}$}}\mathsf{M},d)$ is a metric space. Next, we define two laminations $\mathcal{L}^\pm$ on the metric space $(\mathsf{U}_{\hbox{\tiny $\mathrm{rec}$}}\mathsf{M},d)$ which are invariant under the flow $\Phi_t$ on $\mathsf{U}_{\hbox{\tiny $\mathrm{rec}$}}\mathsf{M}$. Finally, we show that the lamination $\mathcal{L}^+$ is a stable lamination and the lamination $\mathcal{L}^-$ is an unstable lamination for the flow $\Phi_t$ on $(\mathsf{U}_{\hbox{\tiny $\mathrm{rec}$}}\mathsf{M},d)$. We note that the method used in this paper to construct the distance function $d$ and to prove contraction properties of the lamination is inspired by \cite{pressure metric}.

\subsection{Metric space structure}

The restriction of any euclidean metric on $\mathbb{A}\times\mathbb{V}$ to the subspace $\mathsf{U}_{\hbox{\tiny $\mathrm{rec}$}}\mathbb{A}$, defines a distance on $\mathsf{U}_{\hbox{\tiny $\mathrm{rec}$}}\mathbb{A}$. We call this distance the $\textit{euclidean distance}$ on $\mathsf{U}_{\hbox{\tiny $\mathrm{rec}$}}\mathbb{A}$. In this section we will define a distance on the space $\mathsf{U}_{\hbox{\tiny $\mathrm{rec}$}}\mathbb{A}$ such that the distance is locally bilipschitz equivalent to any euclidean distance on $\mathsf{U}_{\hbox{\tiny $\mathrm{rec}$}}\mathbb{A}$ and also is $\Gamma$-invariant, so as to get a distance on the quotient space $\mathsf{U}_{\hbox{\tiny $\mathrm{rec}$}}\mathsf{M}$.

We note that any two euclidean metric on $\mathbb{A}\times\mathbb{V}$ are bilipschitz equivalent with each other and hence any two euclidean distances on $\mathsf{U}_{\hbox{\tiny $\mathrm{rec}$}}\mathbb{A}$ are also bilipschitz  equivalent with each other. Fix an euclidean distance $d$ on $\mathsf{U}_{\hbox{\tiny $\mathrm{rec}$}}\mathbb{A}$. The action of $\Gamma$ on the space $\mathbb{A}\times\mathbb{V}$ gives rise to a collection of distances related to $d$ defined as follows: for any $\gamma$ in $\Gamma$ define,
\begin{align}
d_{\gamma}: \mathsf{U}_{\hbox{\tiny $\mathrm{rec}$}}\mathbb{A}&\times\mathsf{U}_{\hbox{\tiny $\mathrm{rec}$}}\mathbb{A}\longrightarrow \mathbb{R}\\
\notag(x,y)&\longmapsto d\left(\gamma ^{-1} x,\gamma ^{-1} y\right)
\end{align} 
Since each element of $\Gamma$ acts as a bilipschitz automorphism with respect to any euclidean distance, any two distances in the family $\{ d_{\gamma} \} _{\gamma \in \Gamma}$ are bilipschitz equivalent with each other.

Compactness of $\mathsf{U}_{\hbox{\tiny $\mathrm{rec}$}}\Sigma$ implies that $\mathsf{U}_{\hbox{\tiny $\mathrm{rec}$}}\mathsf{M}$ is compact and hence we can choose a pre-compact fundamental domain $D$ of $\mathsf{U}_{\hbox{\tiny $\mathrm{rec}$}}\mathsf{M}$ inside $\mathsf{U}_{\hbox{\tiny $\mathrm{rec}$}}\mathbb{A}$ with an open interior. We can also choose a suitable precompact open set $U$ which contains the closure of $D$. We note that properness of the action of $\Gamma$ on $\mathsf{U}_{\hbox{\tiny $\mathrm{rec}$}}\mathbb{A}$ implies that the cover of $\mathsf{U}_{\hbox{\tiny $\mathrm{rec}$}}\mathbb{A}$ by the open sets $\{ \gamma . U \} _{\gamma \in \Gamma}$, is locally finite.

A path joining two points $x$ and $y$ in $\mathsf{U}_{\hbox{\tiny $\mathrm{rec}$}}\mathbb{A}$ is a pair of tuples,
\begin{align*}
\mathcal{P} = \left((z_0,z_1,...,z_n),(\gamma_1,\gamma_2,...,\gamma_n)\right)
\end{align*} 
where $z_i \in \mathsf{U}_{\hbox{\tiny $\mathrm{rec}$}}\mathbb{A}$ and $\gamma_i \in \Gamma$ such that the following two conditions hold,
\begin{align*}
&\text{1. }x = z_0 \in \gamma_1.U \text{ and } y = z_n \in \gamma_{n}.U,\\
&\text{2. }\text{for all } n > i > 0, z_i \in \gamma_{i}.U \cap \gamma_{i+1}.U.
\end{align*}
\begin{definition}
The length of a path is defined by,
\begin{align*}
l(\mathcal{P}) \defeq \sum\limits_{i=0}^{n-1}d_{\gamma_{i+1}}\left(z_i,z_{i+1}\right)
\end{align*}
\end{definition}
\begin{definition}\label{dist}
We then define, 
\begin{align*}
\tilde{d}(x,y) \defeq \text{inf } \{ l(\mathcal{P}) \mid \mathcal{P} \text{ joins } x \text{ and } y \} 
\end{align*} 
\end{definition}
\begin{lemma}
$\tilde{d}$ is a $\Gamma$-invariant pseudo-metric.
\end{lemma}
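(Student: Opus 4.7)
The four axioms of a pseudo-metric (non-negativity, vanishing on the diagonal, symmetry, triangle inequality) together with $\Gamma$-invariance all reduce to simple manipulations of paths in the sense of the definition above. Non-negativity is automatic because each $d_{\gamma}$ is a genuine distance, so every summand in $l(\mathcal{P})$ is non-negative and hence so is the infimum.

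For $\tilde{d}(x,x)=0$, I would use the fact that the sets $\{\gamma.U\}_{\gamma\in\Gamma}$ cover $\mathsf{U}_{\hbox{\tiny rec}}\mathbb{A}$ to choose some $\gamma\in\Gamma$ with $x\in\gamma.U$; then the trivial path $\mathcal{P}=((x,x),(\gamma))$ is admissible and has length $d_{\gamma}(x,x)=0$. For symmetry, given a path $\mathcal{P}=((z_0,\dots,z_n),(\gamma_1,\dots,\gamma_n))$ from $x$ to $y$, the reversed datum $\mathcal{P}^{\mathrm{op}}\defeq((z_n,\dots,z_0),(\gamma_n,\dots,\gamma_1))$ is a path from $y$ to $x$ of the same length, because each $d_{\gamma}$ is symmetric; hence the sets of admissible path lengths coincide and $\tilde{d}(x,y)=\tilde{d}(y,x)$. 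For the triangle inequality I would concatenate: if $\mathcal{P}$ joins $x$ to $y$ via $(\gamma_1,\dots,\gamma_n)$ and $\mathcal{Q}$ joins $y$ to $z$ via $(\gamma'_1,\dots,\gamma'_m)$, then the concatenation has $y$ as its middle vertex sitting in $\gamma_n.U\cap\gamma'_1.U$ (by the admissibility of the final step of $\mathcal{P}$ and of the initial step of $\mathcal{Q}$), so it is an admissible path from $x$ to $z$ of length $l(\mathcal{P})+l(\mathcal{Q})$; taking infima yields $\tilde{d}(x,z)\le\tilde{d}(x,y)+\tilde{d}(y,z)$.

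Finally, for $\Gamma$-invariance, given $\gamma\in\Gamma$ and a path $\mathcal{P}=((z_0,\dots,z_n),(\gamma_1,\dots,\gamma_n))$ joining $x$ to $y$, define $\gamma\cdot\mathcal{P}\defeq((\gamma z_0,\dots,\gamma z_n),(\gamma\gamma_1,\dots,\gamma\gamma_n))$. The admissibility conditions transform correctly because $\gamma z_i\in(\gamma\gamma_i).U\cap(\gamma\gamma_{i+1}).U$, and the summands satisfy
\[
d_{\gamma\gamma_{i+1}}(\gamma z_i,\gamma z_{i+1})=d\bigl((\gamma\gamma_{i+1})^{-1}\gamma z_i,(\gamma\gamma_{i+1})^{-1}\gamma z_{i+1}\bigr)=d(\gamma_{i+1}^{-1}z_i,\gamma_{i+1}^{-1}z_{i+1})=d_{\gamma_{i+1}}(z_i,z_{i+1}),
\]
so $l(\gamma\cdot\mathcal{P})=l(\mathcal{P})$. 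Since $\mathcal{P}\mapsto\gamma\cdot\mathcal{P}$ is a bijection between admissible paths from $x$ to $y$ and admissible paths from $\gamma x$ to $\gamma y$, the infima coincide, giving $\tilde{d}(\gamma x,\gamma y)=\tilde{d}(x,y)$.

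The only potentially delicate point is to be sure that at least one admissible path joins any given pair of points (otherwise $\tilde{d}$ would take the value $+\infty$, which is generally tolerated in the definition of a pseudo-metric but perhaps not in the usage here); this is handled by the local finiteness of the cover $\{\gamma.U\}_{\gamma\in\Gamma}$ together with path-connectedness of $\mathsf{U}_{\hbox{\tiny rec}}\mathbb{A}$ within this cover, so no genuine obstacle arises. All remaining verifications are bookkeeping on the length functional $l$.
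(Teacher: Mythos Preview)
Your argument is correct and, for the $\Gamma$-invariance, essentially identical to the paper's: both push a path along by the group element (the paper uses $\gamma^{-1}$ on a path from $\gamma x$ to $\gamma y$, you use $\gamma$ on a path from $x$ to $y$) and unwind the definition of $d_{\gamma_{i+1}}$ to see that the length is unchanged. Your proof is in fact more complete than the paper's, which records only $\Gamma$-invariance and non-negativity explicitly and leaves the remaining pseudo-metric axioms (vanishing on the diagonal, symmetry, triangle inequality) and the existence of at least one admissible path entirely to the reader; your verifications of these via the trivial path, path reversal, and concatenation are exactly what is needed and introduce no new ideas beyond bookkeeping.
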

\begin{proof}
If $\mathcal{P} = ((z_0,z_1,...,z_n),(\gamma_1,\gamma_2,...,\gamma_n))$ is a path joining $\gamma x$ and $\gamma y$, then the path,
\begin{align*}
 \gamma ^{-1}.\mathcal{P} \defeq \left(\left(\gamma ^{-1}z_0, \gamma ^{-1}z_1,...,\gamma ^{-1}z_n\right),\left(\gamma ^{-1}\gamma_1,\gamma ^{-1}\gamma_2,...,\gamma ^{-1}\gamma_n\right)\right)
\end{align*}
is a path joining $x$ and $y$. Moreover, 
\begin{align}
\notag l(\mathcal{P}) &= \sum \limits _{i=0}^{n-1}d_{\gamma _{i+1}}\left(z_i,z_{i+1}\right) = \sum \limits _{i=0}^{n-1}d\left(\gamma _{i+1} ^{-1}z_i,\gamma _{i+1} ^{-1}z_{i+1}\right)\\
\notag &= \sum \limits _{i=0}^{n-1}d\left(\left(\gamma ^{-1} \gamma _{i+1}\right) ^{-1} \gamma ^{-1}z_i,\left(\gamma ^{-1} \gamma _{i+1}\right) ^{-1}\gamma ^{-1}z_{i+1}\right) \\
\notag &= \sum \limits _{i=0}^{n-1}d_{\gamma ^{-1}\gamma _{i+1}}\left(\gamma ^{-1}z_i,\gamma ^{-1}z_{i+1}\right)\\
\notag &= l\left(\gamma ^{-1}.\mathcal{P}\right).
\end{align}
Hence, using the definition of $\tilde{d}$ we get $\tilde{d}(\gamma x,\gamma y)$ is equal to $\tilde{d}(x,y)$. \\
We also notice that $l(\mathcal{P})$ is a sum of distances. So $l(\mathcal{P})$ is non-negative and hence $\tilde{d}$ is non-negative.
\end{proof}

It remains to show that $\tilde{d}$ is a metric and $\tilde{d}$ is locally bilipschitz equivalent to any euclidean distance. As all euclidean distances are bilipschitz equivalent with each other, it suffices to show that $\tilde{d}$ is locally bilipschitz equivalent with $d$.

\begin{lemma}
$\tilde{d}$ is a metric and $\tilde{d}$ is locally bilipschitz equivalent to $d$.
\end{lemma}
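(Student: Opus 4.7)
The plan is to prove local bilipschitz equivalence between $\tilde d$ and $d$; non-degeneracy of $\tilde d$ (namely $\tilde d(x,y)=0 \Rightarrow x=y$) will then be automatic, upgrading the previous $\Gamma$-invariant pseudo-metric to a metric. Symmetry and the triangle inequality for $\tilde d$ are routine from the definition: reversing a path gives a valid path of equal length between the swapped endpoints, and concatenating a path from $x$ to $y$ with one from $y$ to $z$ produces a valid path from $x$ to $z$ whose length is the sum. So I will concentrate on the bilipschitz estimate, which I split into an upper bound (easy) and a lower bound (the main work).

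For the upper bound, fix $x\in\mathsf{U}_{\hbox{\tiny $\mathrm{rec}$}}\mathbb{A}$ and choose $\gamma_x\in\Gamma$ with $x\in\gamma_x U$, which exists because $\{\gamma U\}_{\gamma\in\Gamma}$ covers $\mathsf{U}_{\hbox{\tiny $\mathrm{rec}$}}\mathbb{A}$. Shrink to a $d$-open neighborhood $V_x\subset\gamma_x U$ of $x$. For any $y\in V_x$ the one-step path $\mathcal{P}=((x,y),(\gamma_x))$ is admissible, so
\[\tilde d(x,y)\ \leq\ d_{\gamma_x}(x,y)\ =\ d(\gamma_x^{-1}x,\gamma_x^{-1}y)\ \leq\ \|\mathtt{L}(\gamma_x)^{-1}\|\, d(x,y),\]
giving $\tilde d\leq C_1 d$ on $V_x$ with $C_1\defeq\|\mathtt{L}(\gamma_x)^{-1}\|$.

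For the reverse inequality, for $R>0$ set $B_R\defeq\{z:d(x,z)<R\}$ and $F_R\defeq\{\gamma\in\Gamma:\gamma U\cap\overline{B_R}\neq\emptyset\}$; local finiteness of the cover $\{\gamma U\}_{\gamma\in\Gamma}$ makes $F_R$ finite, so $K_R\defeq\max_{\gamma\in F_R}\|\mathtt{L}(\gamma)\|<\infty$. I claim that any path $\mathcal{P}=((z_0,\ldots,z_n),(\gamma_1,\ldots,\gamma_n))$ from $x$ with $l(\mathcal{P})<R/K_R$ satisfies $z_i\in B_R$ and $\gamma_i\in F_R$ for every $i$. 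The base case $z_0=x$ is clear; for the inductive step, $z_{i-1}\in B_R\cap\gamma_i U$ forces $\gamma_i\in F_R$, hence $\|\mathtt{L}(\gamma_i)\|\leq K_R$, which gives
\[d(z_{i-1},z_i)\ =\ d\bigl(\gamma_i(\gamma_i^{-1}z_{i-1}),\gamma_i(\gamma_i^{-1}z_i)\bigr)\ \leq\ K_R\, d_{\gamma_i}(z_{i-1},z_i),\]
and telescoping yields $d(x,z_i)\leq K_R\, l(\mathcal{P})<R$. Applying this to $y=z_n$ and taking the infimum over admissible paths shows $d(x,y)\leq K_R\,\tilde d(x,y)$ whenever $\tilde d(x,y)<R/K_R$.

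Combining the two estimates on a neighborhood of $x$ small enough that $\tilde d(x,y)\leq C_1 d(x,y)<R/K_R$ for all $y$ in it, one obtains $K_R^{-1}d(x,y)\leq\tilde d(x,y)\leq C_1 d(x,y)$, completing the proof. The principal obstacle is the inductive confinement step above: a priori a path could achieve small $\tilde d$-length by exploiting translates $\gamma U$ with a strongly contracting $\mathtt{L}(\gamma)$, chaining Euclidean-distant points cheaply; ruling this out requires precisely the pairing of local finiteness (making $F_R$ finite) with the simultaneous induction on $z_i\in B_R$ and $\gamma_i\in F_R$ that feeds the uniform bound $K_R$ back into the telescoping estimate.
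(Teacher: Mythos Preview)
Your proof is correct and follows the same underlying idea as the paper's: use local finiteness of the cover $\{\gamma U\}$ to produce a finite set of group elements near $x$, extract a uniform bilipschitz constant over that finite set, and then argue that any short path is trapped among those group elements, so the uniform constant applies to every step and a telescoping estimate gives the lower bound. The upper bound via a one-step path is identical.

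The organizational difference is that the paper argues by dichotomy---either some $\gamma_k$ lies outside the finite set $A$, in which case the path must cross out of a fixed neighborhood and hence has length bounded below by a constant, or all $\gamma_k\in A$ and the bilipschitz bound applies directly---whereas you run a single induction on the vertices $z_i$, showing that if $l(\mathcal{P})<R/K_R$ then every $z_i$ stays in $B_R$ and every $\gamma_i$ stays in $F_R$. Your induction is slightly cleaner and avoids the case split and the auxiliary constant $\alpha/(10K)$; the paper's version makes the ``escape costs at least $\alpha/(5K)$'' mechanism more visible. Both yield the same local bilipschitz conclusion, and your deduction of non-degeneracy of $\tilde d$ from the lower bound is exactly how the paper concludes as well.
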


\begin{proof}
Let $z$ be a point in $\mathsf{U}_{\hbox{\tiny $\mathrm{rec}$}}\mathbb{A}$. There exists a neighbourhood $V$ of $z$ in $\mathsf{U}_{\hbox{\tiny $\mathrm{rec}$}}\mathbb{A}$ such that
\[
A \defeq \{ \gamma \mid \gamma.U \cap V \neq \emptyset \} 
\]
is a finite set. We fix $V$ and choose a positive real number $\alpha$ so that
\[ \mathop{\bigcup}_{\gamma \in A} \left\lbrace x \mid d_\gamma(z,x)\leqslant \alpha\right\rbrace \subset V. \]
We have seen that any two distances in the family $\{ d_{\gamma} \} _{\gamma \in \Gamma}$ are bilipschitz equivalent with each other. Hence $A$ being a subset of $\Gamma$, any two distances in $A$ are bilipschitz equivalent with each other. Now finiteness of $A$ implies that we can choose a constant $K$ such that for all $\beta_1,\beta_2$ in $A$ we have that $d_{\beta_1}$ and $d_{\beta_2}$ are $K$-bilipschitz equivalent with each other. We set,
\[ W \defeq \mathop\bigcap_{\gamma \in A} \left\lbrace x \mid d_\gamma (z,x) \leqslant \frac{\alpha}{10K}\right\rbrace. \] We note that $W$ is a subset of $V$ because $K$ is bigger than 1.

By construction, if $x,y$ is in $W$ then for all $\gamma$ in $A$ we have, 
\begin{align}\label{4}
d_\gamma(x,y) \leqslant d_\gamma(x,z) + d_\gamma(z,y) \leqslant \frac{\alpha}{5K}.
\end{align}
Now let $x$ be any point in $ W$, $y$ be any general point and 
\[\mathcal{P} = ((z_0,z_1,...,z_n),(\gamma_1,\gamma_2,...,\gamma_n))\] 
be a path joining $x$ and $y$. 

We notice that $x = z_0$ is in $\gamma_1 U$. On the other hand $x$ is also an element of $W$, which is a subset of $V$. Therefore,
\begin{align*}
\gamma_1 U \cap V \neq \emptyset
\end{align*}
Hence $\gamma_1$ is in $A$. If there exists $k$ such that $\gamma_{k}$ is not in $A$ then we choose $j$ to be the smallest $k$ such that $\gamma_k$ is not in $A$.
\begin{align} \label{5}
l(\mathcal{P}) = \sum \limits _{i=0}^{n-1}d_{\gamma _{i+1}}(z_i,z_{i+1})
\geqslant \sum\limits _{i=0}^{j-1}d_{\gamma _{i+1}}(z_i,z_{i+1}).
\end{align}
Now using the fact that $d_{\gamma_{j-1}}$ is K-bilipschitz  equivalent with $d_{\gamma_i}$ for any $\gamma_i$ in $A$ we get,
\begin{align}
\sum\limits _{i=0}^{j-1}d_{\gamma _{i+1}}(z_i,z_{i+1}) \geqslant \frac{1}{K}\sum\limits _{i=0}^{j-1}d_{\gamma _{j-1}}(z_i,z_{i+1}).
\end{align}
Now from the triangle inequality it follows that
\begin{align}
\frac{1}{K}\sum\limits _{i=0}^{j-1}d_{\gamma _{j-1}}(z_i,z_{i+1}) &\geqslant \frac{1}{K}d_{\gamma _{j-1}}(z_0,z_j)\\
\notag &\geqslant \frac{1}{K}(d_{\gamma_{j-1}}(z,z_j)- {d_{\gamma_{j-1}}}(z,z_0)).
\end{align}
The point $z_0 = x$, belongs to $W$ and $\gamma_{j-1}$ belongs to $A$. Therefore, by the definition of $W$ we get that
\begin{align}\label{6}
{d_{\gamma_{j-1}}}(z,z_0) \leqslant \frac{\alpha}{10K}.
\end{align}
We also know that $\gamma_j$ is not in $A$. Hence $\gamma_j . U$ does not intersect with $V$. The point $z_j$ by definition belongs to $\gamma_j . U$ and so $z_j$ is not in $V$. Therefore by the choice of $\alpha$ it follows that
\begin{align}\label{7}
d_{\gamma_{j-1}}(z,z_j) > \alpha .
\end{align}
Using the inequalities \ref{5} and \ref{6} we get that
\begin{align}
\frac{1}{K}\left( d_{\gamma_{j-1}}(z,z_j)- {d_{\gamma_{j-1}}}(z,z_0)\right) &> \frac{1}{K}\left(\alpha - \frac{\alpha}{10K}\right).
\end{align}
Now as $K$ is bigger than 1 we have,
\begin{align}\label{8}
\frac{1}{K}\left(\alpha - \frac{\alpha}{10K}\right) \geqslant \frac{1}{K}\left(\alpha - \frac{\alpha}{10}\right) > \frac{\alpha}{5K}.
\end{align}
Finally, using the inequalities from \ref{5} to \ref{8} we get that if there exists $k$ such that $\gamma_{k}$ is not in $A$ then,
\begin{align}\label{9}
l(\mathcal{P}) > \frac{\alpha}{5K}.
\end{align}\\
On the other hand, if for all $k$ we have $\gamma_k$ in $A$, then for all $\gamma \in A$ we have, 
\begin{align}  
l(\mathcal{P}) &= \sum \limits _{i=0}^{n-1}d_{\gamma _{i+1}}(z_i,z_{i+1}) \geqslant \frac{1}{K}\sum\limits_{i=0}^{n-1}d_{\gamma}(z_i,z_{i+1}).
\end{align}
And using triangle inequality it follows that
\begin{align}
\frac{1}{K}\sum\limits_{i=0}^{n-1}d_{\gamma}(z_i,z_{i+1}) \geqslant \frac{1}{K}d_\gamma(x,y).
\end{align}
Therefore, in the case when for all $k$, $\gamma_k$ is in $A$, we have for all $\gamma$ in $A$,
\begin{align}\label{10}
l(\mathcal{P}) \geqslant \frac{1}{K}d_\gamma(x,y).
\end{align}

Combining the inequalities \ref{9} and \ref{10} and using the definition of $\tilde{d}$ we have that for any point $x$ in $W$, any general point $y$ and for all $\gamma$ in $A$,
\begin{align} \label{11}
\tilde{d}(x,y) \geqslant \frac{1}{K} \inf\left(\frac{\alpha}{5},d_\gamma(x,y)\right). 
\end{align}
Therefore for any point $y$ distinct from $z$ we have,
\begin{align} 
\tilde{d}(z,y) > 0.
\end{align}
The above is true for any arbitrary choice of $z$ and hence it follows that $\tilde{d}$ is a metric. 

Moreover, if $x, y$ are points in $W$ and $\gamma$ is in $A$ then from the inequality \ref{4} we get,
\begin{align*}
d_\gamma(x,y) \leqslant \frac{\alpha}{5K} \leqslant \frac{\alpha}{5}
\end{align*}
and hence for all $x$, $y$ in $W$ and $\gamma$ in $A$,
\begin{align}\label{12}
\inf \left(\frac{\alpha}{5},d_\gamma(x,y)\right) = d_\gamma(x,y).
\end{align}
Therefore, from the inequalities \ref{11} and \ref{12} it follows that for $x$, $y$ in $W$ and for any $\gamma$ in $A$,
\begin{align}
\tilde{d}(x,y) \geqslant \frac{1}{K}d_\gamma(x,y).
\end{align}
We know that there exists $\gamma_a$ such that the point $z$ is inside the open set $\gamma_a . U$. We note that the above defined $\gamma_a$ is also an element of $A$. Finally, we set $W_a$ to be the intersection of of the set $W$ with the set $\gamma_a . U$. Let $x , y$ be any two points in $W_a$. We choose the path $\mathcal{P}_0 = ((x,y),(\gamma_a,\gamma_a))$ and get that
\begin{align*}
\tilde{d}(x,y) = \text{inf } \{ l(\mathcal{P}) \mid \mathcal{P} \text{ joins } x \text{ and } y \}  \leqslant l(\mathcal{P}_0)=d_{\gamma _a}(x,y).
\end{align*}
Hence, $\tilde{d}$ is bilipschitz equivalent to $d_{\gamma _a}$ on $W_a$ and the distance $d$ is bilipschitz equivalent to $d_{\gamma _a}$. Therefore, $d$ is bilipschitz to $\tilde{d}$ on $W_a$. Since $z$ was arbitrarily chosen it follows that $d$ is locally bilipschitz equivalent to $\tilde{d}$.

\end{proof}

\subsection{The lamination and its lift}

In this subsection, we explicitly describe two laminations of $\mathsf{U}_{\hbox{\tiny $\mathrm{rec}$}}\mathbb{A}$ for the flow $\Phi_t$ on $\mathsf{U}_{\hbox{\tiny $\mathrm{rec}$}}\mathbb{A}$ and show that the laminations are equivariant under the action of the flow and the action of $\Gamma$. We will also define the notion of a leaf lift.

Let $Z$ be a point in $\mathsf{U}_{\hbox{\tiny $\mathrm{rec}$}}\mathbb{A}$. We know from the theorem \ref{commute} that for all $Z\in\mathsf{U}_{\hbox{\tiny $\mathrm{rec}$}}\mathbb{A}$ there exists an unique $g\in\mathsf{U}_{\hbox{\tiny $\mathrm{rec}$}}\mathbb{H}$ such that $Z = \mathtt{N}(g)$. 
\begin{definition}\label{lam1}
The positive and central positive partition of $\mathsf{U}_{\hbox{\tiny $\mathrm{rec}$}}\mathbb{A}$ are respectively given by,
\begin{align*}
\mathcal{L}^{+}_{\mathtt{N}(g)} &\defeq \tilde{\mathcal{L}}^{+}_{\mathtt{N}(g)} \cap \mathsf{U}_{\hbox{\tiny $\mathrm{rec}$}}\mathbb{A}\\
\mathcal{L}^{+,0}_{\mathtt{N}(g)} &\defeq \tilde{\mathcal{L}}^{+,0}_{\mathtt{N}(g)} \cap \mathsf{U}_{\hbox{\tiny $\mathrm{rec}$}}\mathbb{A}
\end{align*}
where
\begin{align*}
\tilde{\mathcal{L}}^{+}_{\mathtt{N}(g)} \defeq \lbrace ({N}(g) &+ s_1 {\nu}^+(g), {\nu}(g) + s_2 {\nu}^+(g))\mid s_1, s_2 \in \mathbb{R} \rbrace, \\
\tilde{\mathcal{L}}^{+,0}_{\mathtt{N}(g)} \defeq \{ ({N}(g) &+ s_1 {\nu}^+(g) + t{\nu}(g) , {\nu}(g) + s_2 {\nu}^+(g))\mid t, s_1, s_2 \in \mathbb{R} \}. 
\end{align*}
\end{definition}
\begin{definition}\label{lam2}
The negative  and central negative partition of $\mathsf{U}_{\hbox{\tiny $\mathrm{rec}$}}\mathbb{A}$ are respectively given by,
\begin{align*}
\mathcal{L}^{-}_{\mathtt{N}(g)} \defeq \tilde{\mathcal{L}}^{-}_{\mathtt{N}(g)} \cap \mathsf{U}_{\hbox{\tiny $\mathrm{rec}$}}\mathbb{A}\\
\mathcal{L}^{-,0}_{\mathtt{N}(g)} \defeq \tilde{\mathcal{L}}^{-,0}_{\mathtt{N}(g)} \cap \mathsf{U}_{\hbox{\tiny $\mathrm{rec}$}}\mathbb{A}
\end{align*}
where
\begin{align*}
\tilde{\mathcal{L}}^{-}_{\mathtt{N}(g)} \defeq \{ ({N}(g) &+ s_1 {\nu}^-(g),{\nu}(g) + s_2 {\nu}^-(g))\mid s_1, s_2 \in \mathbb{R} \},\\
\tilde{\mathcal{L}}^{-,0}_{\mathtt{N}(g)} \defeq \{ ({N}(g) &+ s_1 {\nu}^-(g) + t{\nu}(g), {\nu}(g) + s_2 {\nu}^-(g))\mid t, s_1, s_2 \in \mathbb{R} \}.
\end{align*}
\end{definition}

\begin{lemma}\label{nulimit}
Let $g, h$ be two points in $\mathsf{U}\mathbb{H}$ then 
\begin{align*}
h\text{ is in }\bigcup_{t \in \mathbb{R}}\tilde{\mathcal{H}}_{\tilde{\phi_t}g}^+\text{ if and only if }{\nu}(h) = {\nu}(g) + \frac{\langle {\nu}(h), {\nu}^-(g) \rangle}{\langle {\nu}^+(g), {\nu}^-(g) \rangle}{\nu}^+(g).
\end{align*}
\end{lemma}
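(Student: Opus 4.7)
The plan is to reduce everything to an explicit matrix computation using the identification $\Theta : \mathsf{SO}^0(2,1) \xrightarrow{\sim} \mathsf{U}\mathbb{H}$. Under $\Theta$, one has $\tilde{\phi}_t g = g\cdot a(t)$ and $\tilde{\mathcal{H}}^+_g = g\cdot\{u^+(s) : s\in\mathbb{R}\}$, so the union $\bigcup_{t\in\mathbb{R}}\tilde{\mathcal{H}}^+_{\tilde{\phi}_t g}$ equals the right coset $g\cdot AN^+$, where $AN^+ := \{a(t)\,u^+(s) : t,s\in\mathbb{R}\}$ is the minimal parabolic subgroup.

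For the ``only if'' direction, I would compute directly from the matrices in equations \ref{u1} and \ref{a}:
\[
a(t)\,u^+(s)\,(1,0,0)^t = (1,\,2se^t,\,2se^t)^t = (1,0,0)^t + 2\sqrt{2}\,se^t\,(0,\tfrac{1}{\sqrt{2}},\tfrac{1}{\sqrt{2}})^t.
\]
Applying $g$ on the left and using the equivariance properties \ref{nu2} and \ref{limit2} of $\nu$ and $\nu^+$, this shows that for $h = g\cdot a(t)\,u^+(s)$ one has $\nu(h) = \nu(g) + 2\sqrt{2}\,se^t\,\nu^+(g)$, so in particular $\nu(h) - \nu(g) \in \mathbb{R}\,\nu^+(g)$.

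For the converse, the key identity to establish is
\[
AN^+ \;=\; \bigl\{\,k\in\mathsf{SO}^0(2,1) : k\,(1,0,0)^t \in (1,0,0)^t + \mathbb{R}\,(0,1,1)^t\,\bigr\}.
\]
The inclusion $\subseteq$ is the forward calculation applied at $g=\mathrm{Id}$. For $\supseteq$, I would check from \ref{u1} and \ref{a} that $u^+(s)$ sends $(1,r,r)^t$ to $(1,r+2s,r+2s)^t$ and $a(t)$ sends $(1,r,r)^t$ to $(1,re^t,re^t)^t$, so $AN^+$ already acts transitively on the line $\{(1,r,r)^t : r\in\mathbb{R}\}$. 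Since the $\mathsf{SO}^0(2,1)$-stabilizer of $(1,0,0)^t$ preserves the orthogonal Lorentz plane and hence equals the one-parameter subgroup $A=\{a(t)\}\subset AN^+$, any $k$ with $k(1,0,0)^t$ lying on that line differs from an element of $AN^+$ by an element of $A$, and therefore lies in $AN^+$. Translating by $g$, this produces $h = g\,a(t)\,u^+(s)$ realizing the given $\nu(h)$, completing the converse.

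Finally, pairing $\nu(h)-\nu(g) = c\,\nu^+(g)$ with $\nu^-(g)$ and invoking the $g$-invariance of $\langle\cdot,\cdot\rangle$ together with the elementary identities
\[
\langle (1,0,0)^t,(0,-\tfrac{1}{\sqrt{2}},\tfrac{1}{\sqrt{2}})^t\rangle = 0, \qquad \langle (0,\tfrac{1}{\sqrt{2}},\tfrac{1}{\sqrt{2}})^t,(0,-\tfrac{1}{\sqrt{2}},\tfrac{1}{\sqrt{2}})^t\rangle = -1,
\]
gives $\langle\nu(g),\nu^-(g)\rangle=0$ and $\langle\nu^+(g),\nu^-(g)\rangle=-1$, from which the stated formula for $c$ follows immediately. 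The only genuine obstacle is the orbit-stabilizer argument identifying $AN^+$ via the image of $(1,0,0)^t$; the rest is bookkeeping with the explicit parametrizations recorded in the Background section.
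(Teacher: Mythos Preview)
Your proof is correct and follows essentially the same route as the paper. Both directions hinge on the explicit computation $a(t)u^+(s)(1,0,0)^t=(1,2se^t,2se^t)^t$ and on the fact that the stabilizer of $(1,0,0)^t$ in $\mathsf{SO}^0(2,1)$ is exactly $A=\{a(t)\}$; the only cosmetic difference is that you package the converse as an orbit--stabilizer identification of $AN^+$, whereas the paper writes it elementwise by observing that $(g\,u^+(a_1/2\sqrt{2}))^{-1}h$ fixes $(1,0,0)^t$ and hence equals some $a(t_1)$.
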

\begin{proof}
Let $h$ be a point of $\bigcup_{t \in \mathbb{R}}\tilde{\mathcal{H}}_{\tilde{\phi_t}g}^+$. Hence there exist real numbers $t_1, t_2$ such that $h = {\nu}(ga(t_1)u^+(t_2))$. Therefore, we have
\begin{align*}
{\nu}(h) &= {\nu}(ga(t_1)u^+(t_2)) = ga(t_1)u^+(t_2)\colvec{3}{1}{0}{0} = ga(t_1)\colvec{3}{1}{2t_2}{2t_2}\\
&= ga(t_1)\left(\colvec{3}{1}{0}{0} + \colvec{3}{0}{2t_2}{2t_2}\right) = \nu(g) + 2t_2.ga(t_1)\colvec{3}{0}{1}{1}\\
&= \nu(g) + 2t_2(\cosh t_1 + \sinh t_1).g\colvec{3}{0}{1}{1}\\ 
&= \nu(g) + 2\sqrt{2}\ t_2(\cosh t_1 + \sinh t_1).\nu^+(g).
\end{align*}
Now we notice that
\begin{align*}
\langle\nu(h), \nu^-(g)\rangle &= \langle\nu(g) + 2\sqrt{2}\ t_2(\cosh t_1 + \sinh t_1).\nu^+(g), \nu^-(g)\rangle\\ &= 2\sqrt{2}\ t_2(\cosh t_1 + \sinh t_1).\langle\nu^+(g), \nu^-(g)\rangle .
\end{align*}
Combining the above two calculations we get
\begin{align*}
\nu(h) = \nu(g) + \frac{\langle \nu(h), \nu^-(g) \rangle}{\langle \nu^+(g), \nu^-(g) \rangle}\nu^+(g).
\end{align*}
Now let $g, h$ be two points in $\mathsf{U}\mathbb{H}$ satisfying,
\begin{align*}
\nu(h) = \nu(g) + a_1\nu^+(g)
\end{align*}
for some real number $a_1$. Using the definition of $\nu$ and $\nu^+$ we observe that the above equation is equvalent to the following equation,
\begin{align*}
\left(g.u^+\left(\frac{a_1}{2\sqrt{2}}\right)\right)^{-1}.h\colvec{3}{1}{0}{0} = \left(u^+\left(\frac{a_1}{2\sqrt{2}}\right)\right)^{-1}\colvec{3}{1}{a_1/\sqrt{2}}{a_1/\sqrt{2}} = \colvec{3}{1}{0}{0}.
\end{align*}
We know that the only elements of $\mathsf{SO}^0(2,1)$ fixing the vector $\colvec{3}{1}{0}{0}$ are of the form $a(t)$ for some real number $t$. Hence there exist a real number $t_1$ such that
\begin{align*}
\left(g.u^+\left(\frac{a_1}{2\sqrt{2}}\right)\right)^{-1}.h = a(t_1).
\end{align*}
Therefore,
\begin{align*}
h = g.u^+\left(\frac{a_1}{2\sqrt{2}}\right).a(t_1) = g.a(t_1).u^+\left(\frac{a_1\exp(-t_1)}{2\sqrt{2}}\right)
\end{align*}
and the result follows.
\end{proof}
\begin{corollary}\label{nu+}
Let $g, h$ be two points in $\mathsf{U}\mathbb{H}$ and $h$ is in $\bigcup_{t \in \mathbb{R}}\tilde{\mathcal{H}}_{\tilde{\phi_t}g}^+$ then 
\begin{align*}
\frac{\langle \nu(g), \nu^-(h) \rangle}{\langle \nu^+(h), \nu^-(h) \rangle}\nu^+(h) = -\frac{\langle \nu(h), \nu^-(g) \rangle}{\langle \nu^+(g), \nu^-(g) \rangle}\nu^+(g).
\end{align*}
\end{corollary}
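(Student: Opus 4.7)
The plan is to exploit the symmetry of the hypothesis in $g$ and $h$ and then apply Lemma \ref{nulimit} twice.

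The first step is to verify that the relation ``$h\in\bigcup_{t\in\R}\tilde{\mathcal{H}}_{\tilde{\phi_t}g}^+$'' is symmetric in $g$ and $h$. Geometrically this is clear: it says that the forward geodesics through $g$ and $h$ share the same positive endpoint on the boundary at infinity. Algebraically, the proof of Lemma \ref{nulimit} shows that the hypothesis implies $h = g\cdot a(t_1)\cdot u^+(s)$ for some $t_1, s\in\R$. Using the commutation relation $u^+(c)\,a(t) = a(t)\,u^+(c\,e^{-t})$, which is already noted in the proof of Lemma \ref{nulimit} and which follows directly from \eqref{a} and \eqref{u1}, one rewrites
\[g \;=\; h\cdot u^+(-s)\cdot a(-t_1) \;=\; h\cdot a(-t_1)\cdot u^+\!\bigl(-s\,e^{t_1}\bigr),\]
so $g$ also lies in $\bigcup_{t\in\R}\tilde{\mathcal{H}}_{\tilde{\phi_t}h}^+$.

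Now I apply Lemma \ref{nulimit} once to the pair $(g,h)$, giving
\[\nu(h) - \nu(g) \;=\; \frac{\langle \nu(h), \nu^-(g)\rangle}{\langle \nu^+(g), \nu^-(g)\rangle}\,\nu^+(g),\]
and once to the pair $(h,g)$, which by Step 1 also satisfies the hypothesis, giving
\[\nu(g) - \nu(h) \;=\; \frac{\langle \nu(g), \nu^-(h)\rangle}{\langle \nu^+(h), \nu^-(h)\rangle}\,\nu^+(h).\]
Adding the two equations the left-hand sides cancel, and the claimed identity
\[\frac{\langle \nu(g), \nu^-(h)\rangle}{\langle \nu^+(h), \nu^-(h)\rangle}\,\nu^+(h) \;=\; -\,\frac{\langle \nu(h), \nu^-(g)\rangle}{\langle \nu^+(g), \nu^-(g)\rangle}\,\nu^+(g)\]
drops out by rearrangement.

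The only step requiring any thought is the symmetry observation in Step 1; once the horocycle relation is recognized as symmetric in its two arguments, Lemma \ref{nulimit} does all the real work and the corollary is a one-line algebraic consequence. I therefore do not expect a genuine obstacle.
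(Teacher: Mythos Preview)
Your proof is correct and follows essentially the same approach as the paper: observe the symmetry of the horocycle relation, apply Lemma~\ref{nulimit} to both $(g,h)$ and $(h,g)$, and combine the two resulting equations. In fact you give more detail than the paper does on the symmetry step, which the paper simply asserts without justification.
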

\begin{proof}
We know that if $h$ is in $\bigcup_{t \in \mathbb{R}}\tilde{\mathcal{H}}_{\tilde{\phi_t}g}^+$ then $g$ is in $\bigcup_{t \in \mathbb{R}}\tilde{\mathcal{H}}_{\tilde{\phi_t}h}^+$. Therefore using lemma \ref{nulimit} we get
\begin{align*}
\nu(h) = \nu(g) + \frac{\langle \nu(h), \nu^-(g) \rangle}{\langle \nu^+(g), \nu^-(g) \rangle}\nu^+(g)
\end{align*}
and
\begin{align*}
\nu(g) = \nu(h) + \frac{\langle \nu(g), \nu^-(h) \rangle}{\langle \nu^+(h), \nu^-(h) \rangle}\nu^+(h).
\end{align*}
Hence
\begin{align*}
\frac{\langle \nu(g), \nu^-(h) \rangle}{\langle \nu^+(h), \nu^-(h) \rangle}\nu^+(h) = -\frac{\langle \nu(h), \nu^-(g) \rangle}{\langle \nu^+(g), \nu^-(g) \rangle}\nu^+(g).
\end{align*}
\end{proof}
\begin{definition}
For all $g$ in $\mathsf{U}_{\hbox{\tiny $\mathrm{rec}$}}\mathbb{H}$ we define,
\begin{align*}
\mathcal{H}_g^{\pm} \defeq \tilde{\mathcal{H}}_g^{\pm} \cap \mathsf{U}_{\hbox{\tiny $\mathrm{rec}$}}\mathbb{H}.
\end{align*}
\end{definition}
\begin{proposition} \label{14}
The following equations are true for all $g$ in $\mathsf{U}_{\hbox{\tiny $\mathrm{rec}$}}\mathbb{H}$,
\begin{align*}
1.&\ {\mathcal{L}}^{+,0}_{\mathtt{N}(g)} = \left\lbrace\mathtt{N}(h) \mid h \in \bigcup_{t \in \mathbb{R}}{\mathcal{H}}_{\tilde{\phi_t}g}^{+}\right\rbrace,\\
2.&\ {\mathcal{L}}^{-,0}_{\mathtt{N}(g)} = \left\lbrace\mathtt{N}(h) \mid h \in \bigcup_{t \in \mathbb{R}}{\mathcal{H}}_{\tilde{\phi_t}g}^{-}\right\rbrace.
\end{align*}
\end{proposition}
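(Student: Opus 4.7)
Since the two statements are symmetric under the exchange $(\nu^+, u^+) \leftrightarrow (\nu^-, u^-)$, it suffices to prove (1). The plan is to verify both inclusions by comparing the two components of $\mathtt{N}(h) = (N(h), \nu(h))$ against the parametric description of $\tilde{\mathcal{L}}^{+,0}_{\mathtt{N}(g)}$ in Definition \ref{lam1}.

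The inclusion $\mathcal{L}^{+,0}_{\mathtt{N}(g)} \subseteq \{\mathtt{N}(h) : h \in \bigcup_t \mathcal{H}^+_{\tilde{\phi}_t g}\}$ is the easier half. Suppose $\mathtt{N}(h) \in \mathcal{L}^{+,0}_{\mathtt{N}(g)}$ for $h \in \mathsf{U}_{\hbox{\tiny rec}}\mathbb{H}$, using the bijection $\mathtt{N}: \mathsf{U}_{\hbox{\tiny rec}}\mathbb{H} \to \mathsf{U}_{\hbox{\tiny rec}}\mathbb{A}$ coming from Theorem \ref{commute}. By the defining form of $\tilde{\mathcal{L}}^{+,0}_{\mathtt{N}(g)}$, the $\nu$-coordinate alone satisfies $\nu(h) = \nu(g) + s_2 \nu^+(g)$ for some $s_2 \in \mathbb{R}$; Lemma \ref{nulimit} then places $h$ inside $\bigcup_t \tilde{\mathcal{H}}^+_{\tilde{\phi}_t g}$, and since $h \in \mathsf{U}_{\hbox{\tiny rec}}\mathbb{H}$ this upgrades to $h \in \bigcup_t \mathcal{H}^+_{\tilde{\phi}_t g}$.

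The reverse inclusion is the substantive half. Given $h = \tilde{\phi}_{t_0} g \cdot u^+(s_0)$, Lemma \ref{nulimit} again yields $\nu(h) = \nu(g) + s_2\nu^+(g)$ for the appropriate $s_2$. For the $N$-coordinate, I decompose
\[
N(h) - N(g) = \bigl(N(h) - N(\tilde{\phi}_{t_0}g)\bigr) + \bigl(N(\tilde{\phi}_{t_0}g) - N(g)\bigr).
\]
Part (3) of Theorem \ref{N} handles the second summand, giving $t\,\nu(g)$ with $t = \int_0^{t_0}f(\tilde{\phi}_s g)\,ds$. The claim to be established for the first summand is that it is a scalar multiple of $\nu^+(\tilde{\phi}_{t_0}g) = e^{t_0}\nu^+(g)$ by property \eqref{limit1}. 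Combining these, $N(h) = N(g) + s_1\nu^+(g) + t\nu(g)$, so that $\mathtt{N}(h) \in \tilde{\mathcal{L}}^{+,0}_{\mathtt{N}(g)} \cap \mathsf{U}_{\hbox{\tiny rec}}\mathbb{A} = \mathcal{L}^{+,0}_{\mathtt{N}(g)}$.

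The main obstacle is exactly this last claim: that $N(g\cdot u^+(s)) - N(g)$ is a scalar multiple of $\nu^+(g)$ for every $s \in \mathbb{R}$. This is a horocyclic counterpart to property (3) in Theorem \ref{N} and is not explicitly listed there; it reflects a structural feature of the neutralised section built in \cite{labourie invariant}. I would establish it either by invoking it as a known property of neutralised sections, or derive it by a direct affine computation using the explicit matrix form of $u^+(s)$ from \eqref{u1}, the $\mathsf{G}$-equivariance of the construction, and the horocycle invariance $\nu^+(g\cdot u^+(s)) = \nu^+(g)$ from \eqref{limit3}.
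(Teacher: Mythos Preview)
Your inclusion $\mathcal{L}^{+,0}_{\mathtt{N}(g)} \subseteq \{\mathtt{N}(h) : h \in \bigcup_t \mathcal{H}^+_{\tilde{\phi}_t g}\}$ is handled exactly as in the paper and is fine.

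The gap is precisely where you flag it: the claim that $N(g\cdot u^+(s)) - N(g)$ lies in $\mathbb{R}\,\nu^+(g)$. Neither of your proposed justifications works. It is not among the listed properties of neutralised sections in Theorem~\ref{N}, so you cannot simply invoke it. And the ``direct affine computation'' route fails because $N$ is \emph{not} $\mathsf{G}$-equivariant: Theorem~\ref{N}(2) gives $N\circ\mathtt{L}(\gamma) = \gamma N$ only for $\gamma \in \Gamma$, not for arbitrary elements such as $u^+(s)$. The map $N$ is obtained by a non-explicit averaging construction in \cite{labourie invariant}; there is no matrix formula for $N$ to compute with, and the horocyclic subgroup does not lie in $\Gamma$. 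So the ``horocyclic counterpart'' to property~(3) you are after is genuinely not available as input.

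The paper circumvents this by a dynamical argument instead of a direct computation. It introduces the function
\[
F(g,h) \;=\; \det\bigl[\,N(g)-N(h),\ \nu(g),\ \nu(h)\,\bigr],
\]
and observes, from \eqref{nu1}, \eqref{nu2} and Theorem~\ref{N}, that $F$ is invariant under both the flow $(\tilde{\phi}_t,\tilde{\phi}_t)$ and the diagonal $\Gamma$-action. By compactness of $\mathsf{U}_{\hbox{\tiny rec}}\Sigma$, $F$ descends to a uniformly continuous function on the compact set of pairs at bounded distance. For $h \in \mathcal{H}_g^+$ one has $d_{\mathsf{U}\mathbb{H}}(\tilde{\phi}_t g,\tilde{\phi}_t h)\to 0$, hence $F(\tilde{\phi}_t g,\tilde{\phi}_t h)\to 0$; by flow invariance $F(g,h)=0$ identically. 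Since $\nu(h) = \nu(g) + c\,\nu^+(g)$ by Lemma~\ref{nulimit}, vanishing of $F$ forces $N(h)-N(g)$ into the span of $\nu(g)$ and $\nu^+(g)$, which is exactly the missing fact. The key idea you are lacking is this passage through flow-invariance plus cocompactness to extract an algebraic constraint on $N$ that is nowhere assumed a priori.
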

\begin{proof}
We start with defining a function,
\begin{align}
F : \mathsf{U}_{\hbox{\tiny $\mathrm{rec}$}}\mathbb{H} &\times \mathsf{U}_{\hbox{\tiny $\mathrm{rec}$}}\mathbb{H} \rightarrow \mathbb{R}\\
\notag (g,h) &\mapsto \det[({N}(g)-{N}(h)), \nu(g), \nu(h)] .
\end{align}
Using equation \ref{nu1} and theorem \ref{N} we get that
\begin{align} \label{13}
F(\tilde{\phi _t}g, \tilde{\phi _t}h) = F(g,h)
\end{align}
for all $t\in\mathbb{R}$. Again using equation \ref{nu2} and theorem \ref{N} we get that the neutralised section and the neutral section are equivariant under the action of $\Gamma$. Hence for all $\gamma$ in $\Gamma$ we have,
\begin{align}\label{18}
F(\gamma g,\gamma h) &= \det [({N}(\gamma g)-{N}(\gamma h)), \nu(\gamma g), \nu(\gamma h)] \\
\notag &= \det [\gamma({N}(g)-{N}(h)), \gamma \nu(g), \gamma \nu(h))] \\
\notag &= \det [\gamma] \det [({N}(g)-{N}(h)), \nu(g), \nu(h)] \\
\notag &= \det [({N}(g)-{N}(h)), \nu(g), \nu(h)] \\
\notag &= F(g, h).
\end{align}
Now for a fixed real number $c_0$ we consider the space,
\begin{align*}
\mathfrak{K} \defeq \lbrace (g_1,g_2) \mid d_{\mathsf{U}\mathbb{H}}(g_1,g_2) \leqslant c_0 \rbrace \subset \mathsf{U}_{\hbox{\tiny $\mathrm{rec}$}}\mathbb{H} \times \mathsf{U}_{\hbox{\tiny $\mathrm{rec}$}}\mathbb{H}.
\end{align*}
Compactness of $\mathsf{U}_{\hbox{\tiny $\mathrm{rec}$}}\Sigma$ implies that $\mathfrak{K}_{\Gamma}$, the projection of $\mathfrak{K}$ in $\Gamma \backslash (\mathsf{U}_{\hbox{\tiny $\mathrm{rec}$}}\mathbb{H} \times \mathsf{U}_{\hbox{\tiny $\mathrm{rec}$}}\mathbb{H})$, is compact. Now continuity of $F$ implies that $F$ is uniformly continuous on $\mathfrak{K}_{\Gamma}$.

Let $g$ and $h$ be two points in $\mathsf{U}_{\hbox{\tiny $\mathrm{rec}$}}\mathbb{H}$ such that $h$ is in $\mathcal{H}_g^+$. Given any such choice of $g$ and $h$ we can choose a sufficiently large $t_0$ such that $d_{\mathsf{U}\mathbb{H}}(\tilde{\phi}_{t_0} g,\tilde{\phi}_{t_0} h)$ is arbitrarily close to zero, hence we have $F(\tilde{\phi}_{t_0} g,\tilde{\phi}_{t_0} h)$ arbitrarily close to zero. Therefore by using equation \ref{13} it follows that $F(g,h)$ is zero for all $h$ in $\mathcal{H}_g^+$. 

Now using equation \ref{13}, equation \ref{nu1} and lemma \ref{nulimit} we have,
\begin{align*}
0 &= F(\tilde{\phi _t}g, \tilde{\phi _t}h) = \det[({N}(\tilde{\phi _t}g)-{N}(\tilde{\phi _t}h)), \nu(\tilde{\phi _t}g), \nu(\tilde{\phi _t}h)]\\ 
&= \det[({N}(\tilde{\phi _t}g)-{N}(\tilde{\phi _t}h)), \nu(g), \nu(h)]\\
&= \det[({N}(\tilde{\phi _t}g)-{N}(\tilde{\phi _t}h)), \nu(g), \nu(g) + \frac{\langle \nu(h), \nu^-(g) \rangle}{\langle \nu^+(g), \nu^-(g) \rangle}\nu^+(g)]\\
&= \frac{\langle \nu(h), \nu^-(g) \rangle}{\langle \nu^+(g), \nu^-(g) \rangle}\det[({N}(\tilde{\phi _t}g)-{N}(\tilde{\phi _t}h)), \nu(g), \nu^+(g)].
\end{align*}
Therefore for all $h$ in $\mathcal{H}_g^+$ and for all real number $t$ we have 
\begin{align*}
\det[({N}(\tilde{\phi _t}g)-{N}(\tilde{\phi _t}h)), \nu(g), \nu^+(g)] = 0.
\end{align*}
Hence there exist real numbers $a_1, b_1$ such that
\begin{align}\label{19}
{N}(\tilde{\phi _t}h) &= {N}(\tilde{\phi _t}g) + a_1 \nu(g) + b_1 \nu^+(g)\\
\notag&= {N}(g) + \left(a_1 + \int\limits_{0}^{t}{f}(\tilde{\phi_s}(g))ds\right) \nu(g) + b_1 \nu^+(g).
\end{align}
Combining lemma \ref{nulimit} and equation \ref{19} we get that
\begin{align*}
{\mathcal{L}}^{+,0}_{\mathtt{N}(g)} \supseteq \left\lbrace\mathtt{N}(h) \mid h \in \bigcup_{t \in \mathbb{R}}{\mathcal{H}}_{\tilde{\phi_t}g}^+\right\rbrace
\end{align*}
Now let $W\in{\mathcal{L}}^{+,0}_{\mathtt{N}(g)}$. By theorem \ref{commute} we know that there exist $h\in\mathsf{U}_{\hbox{\tiny $\mathrm{rec}$}}\mathbb{H}$ such that $W = \mathtt{N}(h)$. Now the choice of $W$ implies that there exist some real number $a_2$ such that
\begin{align*}
\nu(h) = \nu(g) + a_2 \nu^+(g).
\end{align*}
Using lemma \ref{nulimit} we get that $h\in\bigcup\limits_{t \in \mathbb{R}}\tilde{\mathcal{H}}_{\tilde{\phi_t}g}^+$. Therefore $h$ is in 
\begin{align*}
\bigcup\limits_{t \in \mathbb{R}}{\mathcal{H}}_{\tilde{\phi_t}g}^+ = \left(\mathsf{U}_{\hbox{\tiny $\mathrm{rec}$}}\mathbb{H} \cap \bigcup\limits_{t \in \mathbb{R}}\tilde{\mathcal{H}}_{\tilde{\phi_t}g}^+\right)
\end{align*} and we have
\begin{align*}
{\mathcal{L}}^{+,0}_{\mathtt{N}(g)} \subseteq \left\lbrace\mathtt{N}(h) \mid h \in \bigcup_{t \in \mathbb{R}}{\mathcal{H}}_{\tilde{\phi_t}g}^+\right\rbrace .
\end{align*}
Similarly the other equality follows.
\end{proof}

\begin{proposition} \label{lps}
Let $\mathcal{U}_{\mathtt{N}(g)}\subset\mathsf{U}_{\hbox{\tiny $\mathrm{rec}$}}\mathbb{A}$ be a neighborhood of a point $\mathtt{N}(g)$ in $\mathsf{U}_{\hbox{\tiny $\mathrm{rec}$}}\mathbb{A}$. Then the following map is a local homeomorphism:
\begin{align*}
{\amalg}_{\mathtt{N}(g)} : \mathcal{U}_{\mathtt{N}(g)} &\rightarrow (\Lambda_{\infty}\Gamma \times \Lambda_{\infty}\Gamma \setminus \Delta) \times \mathbb{R}\\
\notag \mathtt{N}(h) &\mapsto \left( h^-, h^+, \left\langle {N}(h)-{N}(g), \nu\left(g^-,h^+\right)\right\rangle \right)
\end{align*}
where $h^\pm\defeq\lim_{t\to\pm\infty}\pi(\tilde{\phi}_t h)$ and $\pi$ is the projection from $\mathsf{U}\mathbb{H}$ onto $\mathbb{H}$.
\end{proposition}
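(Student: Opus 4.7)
The plan is to reduce the claim to a statement on $\mathsf{U}_{\hbox{\tiny rec}}\mathbb{H}$ by pre-composing $\amalg_{\mathtt{N}(g)}$ with the H\"older homeomorphism $\mathtt{N}^{-1}:\mathsf{U}_{\hbox{\tiny rec}}\mathbb{A}\to\mathsf{U}_{\hbox{\tiny rec}}\mathbb{H}$ from Theorem \ref{commute}, and then invoke the identification $\mathsf{U}_{\hbox{\tiny rec}}\mathbb{H}\cong(\Lambda_\infty\Gamma\times\Lambda_\infty\Gamma\setminus\Delta)\times\mathbb{R}$ recalled at the end of the background section. In those coordinates the first two slots of $\amalg_{\mathtt{N}(g)}$ are the identity, so the entire content is to show that the third slot, namely $T(h)\defeq\langle N(h)-N(g),\nu(g^-,h^+)\rangle$, provides a continuous reparametrization of the flow direction, uniformly in the transverse endpoint variables.

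To make this precise, choose a continuous local section $\sigma:(h^-,h^+)\mapsto h_0(h^-,h^+)$ of the endpoint map defined on a neighborhood of $(g^-,g^+)$ and satisfying $\sigma(g^-,g^+)=g$; any $h$ close enough to $g$ is then uniquely $\tilde\phi_s\sigma(h^-,h^+)$ for some $s\in\mathbb{R}$. Theorem \ref{N} gives
\[
N(\tilde\phi_s h_0)-N(h_0) = \Bigl(\int_0^s f(\tilde\phi_u h_0)\,du\Bigr)\nu(h_0),
\]
hence
\[
T(\tilde\phi_s h_0)-T(h_0) \;=\; \langle\nu(h_0),\nu(g^-,h^+)\rangle\int_0^s f(\tilde\phi_u h_0)\,du.
\]
The function $f$ is positive (Theorem \ref{N}); the neutral section $\nu$, viewed via property \ref{nu1} as a function of the ordered pair of ideal endpoints, is continuous; and by continuity of $\sigma$ the vector $\nu(h_0)$ depends continuously on $(h^-,h^+)$ with $\nu(h_0(g^-,g^+))=\nu(g)$. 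Therefore on a sufficiently small neighborhood of $g$ the coefficient $\langle\nu(h_0),\nu(g^-,h^+)\rangle$ remains close to $\langle\nu(g),\nu(g)\rangle=1$, so $s\mapsto T(\tilde\phi_s h_0)$ is strictly increasing and continuous, with increments uniformly bounded below by a positive multiple of $|s|$ on any fixed compact $s$-interval, uniformly in $(h^-,h^+)$.

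This uniform monotonicity furnishes a continuous inverse $T\mapsto s$, depending continuously on $(h^-,h^+)$. Together with the joint continuity of $(h^-,h^+,s)\mapsto T$ (inherited from the continuity of $N$, $\nu^\pm$, and $\sigma$), the map
\[
(h^-,h^+,s)\;\longmapsto\;(h^-,h^+,T(h^-,h^+,s))
\]
is a homeomorphism from a neighborhood of $(g^-,g^+,0)$ onto an open subset of $(\Lambda_\infty\Gamma\times\Lambda_\infty\Gamma\setminus\Delta)\times\mathbb{R}$. Transporting back along $\mathtt{N}$ proves the proposition.

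The main technical point to watch is passing from the separate monotonicity in $s$ to joint continuity of the inverse; the uniform positive lower bound on $\partial_s T$ in a neighborhood of $g$, obtained above from the positivity of $f$ and the continuity of $\nu$ on the space of oriented geodesics, is precisely what upgrades a fiberwise homeomorphism to a genuine local homeomorphism.
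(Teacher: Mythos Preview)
Your argument is correct, but it proceeds differently from the paper. Both proofs hinge on the flow behaviour of the third coordinate
\[
T(h)=\bigl\langle N(h)-N(g),\nu(g^-,h^+)\bigr\rangle,
\]
and both compute, exactly as you do, that along the geodesic through $h$ one has
\[
T(\tilde\phi_s h)-T(h)=\bigl\langle\nu(h),\nu(g^-,h^+)\bigr\rangle\int_0^s f(\tilde\phi_u h)\,du.
\]
The difference lies in how the coefficient $\bigl\langle\nu(h),\nu(g^-,h^+)\bigr\rangle$ is handled. You only use that it is close to $\langle\nu(g),\nu(g)\rangle=1$ near $g$, hence positive, and then run a monotonicity argument: fibrewise strict monotonicity plus a uniform positive lower bound on $\partial_s T$ yields a jointly continuous inverse. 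The paper instead invokes the cross-product description
\[
\nu(g^-,h^+)=\frac{\nu^-(g)\boxtimes\nu^+(h)}{\langle\nu^-(g),\nu^+(h)\rangle},
\]
from which (using that $\nu^+(h)$ is null) one gets the exact identity $\bigl\langle\nu(h),\nu(g^-,h^+)\bigr\rangle=1$. This makes the map
\[
\mathfrak{N}_g(h)\;=\;N(h)-T(h)\,\nu(h)
\]
genuinely flow-invariant, so the paper can simply write down the continuous inverse
\[
\Pi_g(h^-,h^+,t)=\bigl(\mathfrak{N}_g(h)+t\,\nu(h),\,\nu(h)\bigr)
\]
and check the two compositions are the identity. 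The payoff of the paper's route is that no separate joint-continuity argument is needed, the inverse being explicit; the payoff of your route is that it does not rely on discovering the algebraic identity, only on continuity and positivity, and would survive in situations where such an identity is unavailable.
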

\begin{proof}
Let $g$ be a point in $\mathsf{U}_{\hbox{\tiny $\mathrm{rec}$}}\mathbb{H}$. We note that for $g\in\mathsf{U}_{\hbox{\tiny $\mathrm{rec}$}}\mathbb{H}$ the points $g^{\pm}$ lies in $\Lambda_{\infty}\Gamma$. We observe that $\partial\mathbb{H}\setminus\{g^+\}$ is homeomorpic to $\mathbb{R}$. Given any $g$, let $\mathcal{V}_{g^-}$ denote a connected bounded open neighborhood of $g^-$ in $\partial\mathbb{H}\setminus\{g^+\}$ and $\mathcal{V}_{g^+}$ be a connected open neighborhood of $g^+$ in $\partial\mathbb{H}\setminus\{g^-\}$ such that $\mathcal{V}_{g^-} \cap \mathcal{V}_{g^+}$ is empty and $\mathcal{V}_{g^-} \times \mathcal{V}_{g^+}$ is a subset of $\partial\mathbb{H} \times \partial\mathbb{H} \setminus \Delta$. We define $\mathcal{U}_{g^\pm} \defeq \mathcal{V}_{g^\pm} \cap \Lambda_\infty\Gamma$. Let $\mathcal{U}_{g}$ be the open subset of $\mathsf{U}_{\hbox{\tiny $\mathrm{rec}$}}\mathbb{H}$ corresponding to the open set $\mathcal{U}_{g^-} \times \mathcal{U}_{g^+} \times \mathbb{R}$. We consider the following continuous map,
\begin{align*}
{\mathfrak{N}}_g : \mathcal{U}_{g}  &\longrightarrow \mathbb{A}\\
h &\longmapsto {N}(h) - \left\langle {N}(h)-{N}(g), \nu(g^-,h^+)\right\rangle \nu(h)
\end{align*}
We notice that 
\[\nu(g^-,h^+)=\frac{\nu^-(g)\boxtimes \nu^+(h)}{\langle \nu^-(g),\nu^+(h)\rangle}.\]
Hence for all real number $t$ we have
\[{\mathfrak{N}}_g(\tilde{\phi}_th) = {\mathfrak{N}}_g(h).\] Now we define the following continuous map:
\begin{align*}
{\Pi}_g : \mathcal{U}_{g^-} \times \mathcal{U}_{g^+} \times \mathbb{R} &\longrightarrow \mathsf{U}_{\hbox{\tiny $\mathrm{rec}$}}\mathbb{A}\\
\notag (h^-, h^+, t) &\longmapsto \left({\mathfrak{N}}_g + t\nu, \nu\right)(h^-, h^+, t)
\end{align*}
and conclude by observing that 
\[{\amalg}_{\mathtt{N}(g)}\circ{\Pi}_g = \textsf{Id},\]
\[{\Pi}_g\circ{\amalg}_{\mathtt{N}(g)} = \textsf{Id}.\]
\end{proof}

\begin{proposition}\label{lps1}
Let $\mathcal{L}^+$ be as defined in definition \ref{lam1}. Then $\mathcal{L}^+$ is a lamination of $\mathsf{U}_{\hbox{\tiny $\mathrm{rec}$}}\mathbb{A}$.
\end{proposition}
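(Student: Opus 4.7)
The plan is to lift the local homeomorphism $\amalg_{\mathtt{N}(g)}$ from Proposition \ref{lps} to a chart for the lamination $\mathcal{L}^+$ by regrouping its three coordinates as $\mathcal{U}_1\defeq\mathcal{U}_{g^-}$ and $\mathcal{U}_2\defeq\mathcal{U}_{g^+}\times\mathbb{R}$. Since $\amalg_{\mathtt{N}(g)}$ is already a homeomorphism onto a neighbourhood of $\mathtt{N}(g)$, the whole verification reduces to showing that, locally, the equivalence classes of $\mathcal{L}^+$ are precisely the fibres over the second factor; conditions (1) and (2) of Definition \ref{lam} then fall out from this intrinsic description.

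To show a leaf has constant $(h^+,\tau)$-coordinate, pick $Z=\mathtt{N}(h)\in\mathcal{L}^+_{\mathtt{N}(g')}$ for $g'$ close to $g$. Definition \ref{lam1} gives $\nu(h)=\nu(g')+s_2\nu^+(g')$, and Lemma \ref{nulimit} then forces $h^+=(g')^+$, proving leaf-invariance of the second chart coordinate. For $\tau$, the companion equation $N(h)=N(g')+s_1\nu^+(g')$ substituted into the defining formula yields
\[
\tau(Z)=\langle N(g')-N(g),\,\nu(g^-,(g')^+)\rangle + s_1\langle\nu^+(g'),\,\nu(g^-,(g')^+)\rangle,
\]
in which the second term vanishes by the key orthogonality
\[
\langle\nu^+(g'),\,\nu(g^-,(g')^+)\rangle = \frac{\det[\nu^+(g'),\nu^-(g),\nu^+(g')]}{\langle\nu^-(g),\nu^+(g')\rangle}=0,
\]
which follows because, as in the proof of Proposition \ref{lps}, $\nu(g^-,(g')^+)$ is proportional to $\nu^-(g)\boxtimes\nu^+(g')$ and the triple product has a repeated column.

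For the converse direction, suppose $Z_i=\mathtt{N}(h_i)$ ($i=1,2$) have $h_1^+=h_2^+$ and $\tau(Z_1)=\tau(Z_2)$. Equation \ref{19} at $t=0$ gives $N(h_2)-N(h_1)=a_1\nu(h_1)+b_1\nu^+(h_1)$, and Lemma \ref{nulimit} gives $\nu(h_2)-\nu(h_1)\in\mathbb{R}\nu^+(h_1)$. Pairing the first identity with $\nu(g^-,h_1^+)$ and using $\tau(Z_1)=\tau(Z_2)$ together with the orthogonality above yields $a_1\langle\nu(h_1),\nu(g^-,h_1^+)\rangle=0$. On the recurrent set $g^-$ and $h_1^+$ are distinct, so $\nu(h_1),\nu^-(g),\nu^+(h_1)$ are linearly independent and $\langle\nu(h_1),\nu(g^-,h_1^+)\rangle\neq 0$, whence $a_1=0$ and $Z_2\in\mathcal{L}^+_{Z_1}$. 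Condition (1) of Definition \ref{lam} is then automatic from this chart-free characterisation of leaf equivalence, and condition (2) reduces to checking that $\mathcal{L}^+$ is genuinely a partition: if two leaves share a point, then $g^+=(g')^+$, $\nu^+(g')\in\mathbb{R}\nu^+(g)$, and $N(g')-N(g)\in\mathbb{R}\nu^+(g)$, forcing them to coincide. The main obstacle I expect is isolating the orthogonality identity above; once that is in hand, everything else is straightforward bookkeeping with Lemma \ref{nulimit}, Theorem \ref{N}, and the explicit inverse $\Pi_g$ from Proposition \ref{lps}.
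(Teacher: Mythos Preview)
Your approach is correct and essentially the same as the paper's: both use the charts $\amalg_{\mathtt{N}(g)}$ from Proposition~\ref{lps} with $\mathcal{U}_2=\mathcal{U}_{g^+}\times\mathbb{R}$, and the crux in both is precisely the orthogonality $\langle\nu^+(h),\nu(g^-,h^+)\rangle=0$ coming from the repeated column in $\det[\nu^+(h),\nu^-(g),\nu^+(h)]$, combined with Lemma~\ref{nulimit} and equation~\ref{19}. Your packaging is slightly cleaner---you first establish the intrinsic equivalence ``same $\mathcal{L}^+$ leaf $\Leftrightarrow$ same $(h^+,\tau)$-coordinate in any chart'' and then read off conditions (1) and (2), whereas the paper verifies (1) and (2) by separate direct computations---but the underlying calculations coincide.

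One small point you leave implicit: for condition~(2) you treat only the direction ``chain $\Rightarrow$ same leaf'' (via the partition check), but not ``same leaf $\Rightarrow$ chain exists.'' The paper handles this explicitly: given $\mathtt{N}(h)\in\mathcal{L}^+_{\mathtt{N}(g)}$, one has $h^+=g^+$, so one may choose $\mathcal{V}_{g^-}\subset\partial\mathbb{H}\setminus\{g^+\}$ large enough to contain $h^-$, yielding a single chart $\mathcal{U}_{\mathtt{N}(g)}$ containing both $\mathtt{N}(g)$ and $\mathtt{N}(h)$ with equal $p^{+,0}$-coordinate. This is exactly the kind of bookkeeping you anticipate, and your identified orthogonality is indeed the only non-formal step.
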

\begin{proof}
We now show that the equivalence relation $\mathcal{L}^+$ on $\mathsf{U}_{\hbox{\tiny $\mathrm{rec}$}}\mathbb{A}$ satisfy properties (1) and (2) of definition \ref{lam} for the local homeomorphism $\amalg$.

\textit{Property} (1):
Let $g_1, g_2$ be two points in $\mathsf{U}_{\hbox{\tiny $\mathrm{rec}$}}\mathbb{H}$, $h_1, h_2$ be two points in the intersection $\mathcal{U}_{g_1} \cap$ $\mathcal{U}_{g_2}$ and $p^{+,0}$ be the projection from $\mathcal{U}_{g^-}\times\mathcal{U}_{g^+}\times\mathbb{R}$ onto $\mathcal{U}_{g^+} \times \mathbb{R}$.  We notice that if
\begin{align*}
p^{+,0}\circ{\amalg}_{\mathtt{N}(g_1)}(\mathtt{N}(h_1)) = p^{+,0}\circ{\amalg}_{\mathtt{N}(g_1)}(\mathtt{N}(h_2))
\end{align*} 
then $h_1^+ = h_2^+$ and 
\begin{align*}
\left\langle {N}(h_1)-{N}(g_1), \nu(g_1^-,h_1^+)\right\rangle = \left\langle {N}(h_2)-{N}(g_1), \nu(g_1^-,h_2^+) \right\rangle .
\end{align*}
Now using proposition \ref{14}, corollary \ref{nu+} and the fact that $h_1^+ = h_2^+$ we get
\[\nu^+(h_2) = c\nu^+(h_1)\]
\[{N}(h_2) = {N}(h_1) + s\nu^+(h_1) + t\nu(h_1)\]
where $c, s, t \in\mathbb{R}$. Hence for $i\in\{1,2\}$
\[\nu(g_i^-,h_2^+)=\nu(g_i^-,h_1^+).\]
Finally using the fact that
\[\left\langle {N}(h_2)-{N}(h_1), \nu(g_1^-,h_1^+) \right\rangle = 0\]
and
\[\nu(g^-,h^+)=\frac{\nu^-(g)\boxtimes \nu^+(h)}{\langle \nu^-(g),\nu^+(h)\rangle}\]
we get $t = 0$. Therefore 
\[\left\langle {N}(h_2)-{N}(h_1), \nu(g_2^-,h_1^+)\right\rangle = \left\langle s\nu^+(h_1), \frac{\nu^-(g_2)\boxtimes \nu^+(h_1)}{\langle \nu^-(g_2),\nu^+(h_1)\rangle}\right\rangle = 0.\]
Hence
\begin{align*}
\left\langle {N}(h_1)-{N}(g_2), \nu(g_2^-,h_1^+)\right\rangle = \left\langle {N}(h_2)-{N}(g_2), \nu(g_2^-,h_2^+) \right\rangle
\end{align*}
and it follows that
\begin{align*}
p^{+,0}\circ{\amalg}_{\mathtt{N}(g_2)}(\mathtt{N}(h_1)) = p^{+,0}\circ{\amalg}_{\mathtt{N}(g_2)}(\mathtt{N}(h_2)).
\end{align*} 
Similarly if we have
\begin{align*}
p^{+,0}\circ{\amalg}_{\mathtt{N}(g_2)}(\mathtt{N}(h_1)) = p^{+,0}\circ{\amalg}_{\mathtt{N}(g_2)}(\mathtt{N}(h_2))
\end{align*}
then
\begin{align*}
p^{+,0}\circ{\amalg}_{\mathtt{N}(g_1)}(\mathtt{N}(h_1)) = p^{+,0}\circ{\amalg}_{\mathtt{N}(g_1)}(\mathtt{N}(h_2)).
\end{align*}

\textit{Property} (2):
Let $\{\mathtt{N}(h_i)\}_{i\in\{1, 2,..., n\}}$ be a sequence of points such that for all $i\in\{1,2,...,n-1\}$ we have 
\[\mathtt{N}(h_{i+1})\in\mathcal{U}_{\mathtt{N}(h_i)}\] 
and
\begin{align*}
p^{+,0}\circ{\amalg}_{\mathtt{N}(h_i)}(\mathtt{N}(h_i)) = p^{+,0}\circ{\amalg}_{\mathtt{N}(h_i)}(\mathtt{N}(h_{i+1})).
\end{align*} 
Hence we have $h_i^+ = h_{i+1}^+$ and 
\[0 = \left\langle {N}(h_i)-{N}(h_i), \nu(h_i^-,h_i^+)\right\rangle = \left\langle {N}(h_{i+1})-{N}(h_i), \nu(h_i^-,h_{i+1}^+) \right\rangle .\]
Now using proposition \ref{14}, corollary \ref{nu+} and $h_i^+ = h_{i+1}^+$ we get that
\[\nu^+(h_{i+1}) = c_i\nu^+(h_i),\]
\[{N}(h_{i+1}) = {N}(h_i) + s_i\nu^+(h_i) + t_i\nu(h_i)\]
for some real numbers $c_i, s_i$ and $t_i$. Hence
\[\nu(h_i^-,h_i^+)=\nu(h_i^-,h_{i+1}^+).\]
Now using the fact that
\begin{align*}
\left\langle {N}(h_{i+1})-{N}(h_i), \nu(h_i^-,h_{i+1}^+)\right\rangle = 0
\end{align*}
we get $t = 0$. Hence we have 
\begin{align*}
{\mathcal{L}}^{+}_{\mathtt{N}(h_i)} = {\mathcal{L}}^{+}_{\mathtt{N}(h_{i+1})}.
\end{align*}
Therefore we conclude that
\begin{align*}
{\mathcal{L}}^{+}_{\mathtt{N}(h_1)} = {\mathcal{L}}^{+}_{\mathtt{N}(h_n)}.
\end{align*}
Now we show the other direction. Let $h\in\mathsf{U}_{\hbox{\tiny $\mathrm{rec}$}}\mathbb{H}$ such that $\mathtt{N}(h)\in{\mathcal{L}}^{+}_{\mathtt{N}(g)}$. Using proposition \ref{14} we get that $h^+ = g^+$. Let $\mathcal{V}_{g^-}$ be a connected bounded open neighborhood of $g^-$ in $\partial_\infty\mathbb{H}\setminus\{g^+\}$ containing the point $h^-$ and let $\mathcal{V}_{g^+}$ be a connected open neighborhood of $g^+$ in $\partial_\infty\mathbb{H}\setminus\{g^-\}$ such that the intersection $\mathcal{V}_{g^+}\cap\mathcal{V}_{g^-}$ is empty. We denote the sets $\mathcal{V}_{g^\pm} \cap \Lambda_\infty\Gamma$ respectively by $\mathcal{U}_{g^\pm}$, the open subset of $\mathsf{U}_{\hbox{\tiny $\mathrm{rec}$}}\mathbb{H}$ corresponding to the open set $\mathcal{U}_{g^-} \times \mathcal{U}_{g^+} \times \mathbb{R}$ by $\mathcal{U}_{g}$ and the open set $\mathtt{N}(\mathcal{U}_{g})$ around $\mathtt{N}(g)$ by $\mathcal{U}_{\mathtt{N}(g)}$. Now we consider the chart $\left(\mathcal{U}_{\mathtt{N}(g)}, {\amalg}_{\mathtt{N}(g)}\right)$ and notice that 
\[p^{+,0}\circ{\amalg}_{\mathtt{N}(g)}(\mathtt{N}(g)) = \left( g^+, 0 \right).\]
Since $\mathtt{N}(h)\in\mathcal{L}^+_{\mathtt{N}(g)}$, using the definition of $\mathcal{L}^+_{\mathtt{N}(g)}$ we get 
\begin{align*}
\left\langle {N}(h)-{N}(g), \nu(g^-,g^+)\right\rangle = 0.
\end{align*}
Now using corollary \ref{nu+} and the fact that $h^+ = g^+$ we obtain
\[\nu(g^-,g^+)=\nu(g^-,h^+).\]
Hence 
\begin{align*}
\left\langle {N}(h)-{N}(g), \nu(g^-,h^+)\right\rangle = 0
\end{align*}
and we finally have
\[p^{+,0}\circ{\amalg}_{\mathtt{N}(g)}(\mathtt{N}(g)) = p^{+,0}\circ{\amalg}_{\mathtt{N}(g)}(\mathtt{N}(h)).\]
Therefore we conclude that ${\mathcal{L}}^{+}$ defines a lamination with plaque neighborhoods given by the image of the open sets $\mathcal{U}_{g^-}$ for $g^-$ in $\Lambda_\infty\Gamma\setminus\{g^+\}$.
\end{proof}

\begin{proposition}\label{lps2}
Let $\mathcal{L}^{-,0}$ be as defined in definition \ref{lam2}. Then $\mathcal{L}^{-,0}$ is a lamination of $\mathsf{U}_{\hbox{\tiny $\mathrm{rec}$}}\mathbb{A}$. Moreover, it is the central lamination corresponding to the lamination $\mathcal{L}^-$.
\end{proposition}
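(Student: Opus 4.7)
The plan is to follow the blueprint of Proposition \ref{lps1} and then invoke the general central-lamination construction of Section 2.3. First I will prove, by mirroring the proof of Proposition \ref{lps1} with $\nu^+$ replaced everywhere by $\nu^-$ (and the positive horocycles $\tilde{\mathcal{H}}^+$ by the negative ones), that $\mathcal{L}^-$ is itself a lamination of $\mathsf{U}_{\hbox{\tiny $\mathrm{rec}$}}\mathbb{A}$. The required ingredients — the analogues of Lemma \ref{nulimit}, Corollary \ref{nu+} and Proposition \ref{14} for the negative limit section — follow from the same computations after the time reversal $t \mapsto -t$, which exchanges the roles of $\nu^+$ and $\nu^-$ and sends $u^+(t)$ to $u^-(t)$; in particular the analogue of the chart $\amalg_{\mathtt{N}(g)}$ of Proposition \ref{lps} is built from $\nu(h^-,g^+) = \nu^-(h)\boxtimes\nu^+(g)/\langle\nu^-(h),\nu^+(g)\rangle$.

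Second, I would verify flow-invariance and transversality of $\mathcal{L}^-$ to $\Phi_t$ on $\mathsf{U}_{\hbox{\tiny $\mathrm{rec}$}}\mathbb{A}$. Because $\Phi_t(X,v) = (X+tv,v)$ and, by Theorem \ref{N}, the $\Phi$-orbit through $\mathtt{N}(g)$ is
\[
\{\mathtt{N}(\tilde{\phi}_r g)\}_{r\in\mathbb{R}} = \{(N(g)+t\nu(g),\nu(g))\mid t\in\mathbb{R}\},
\]
direct substitution into Definition \ref{lam2} shows $\Phi_t \mathcal{L}^-_{\mathtt{N}(g)} = \mathcal{L}^-_{\Phi_t\mathtt{N}(g)}$ after relabeling $s_1 \mapsto s_1 + t s_2$, so $\mathcal{L}^-$ is $\Phi$-invariant. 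The flow direction is $\nu(g)$, which is linearly independent from the $\nu^-(g)$-direction parametrizing an $\mathcal{L}^-$-plaque; combining the $\mathcal{L}^-$-chart of Step 1 with an extra flow factor then yields a local product chart of the form $\mathcal{U}^-_{\mathtt{N}(g)}\times\mathcal{V}\times(-\epsilon,\epsilon)$ demanded for transversality. Positivity of $f$ in Theorem \ref{N}, together with compactness of $\mathsf{U}_{\hbox{\tiny $\mathrm{rec}$}}\Sigma$, guarantees that the reparametrization $s \mapsto \int_0^s f(\tilde{\phi}_r g)\, dr$ is a bijection $\mathbb{R}\to\mathbb{R}$, so every real shift along $\nu(g)$ is realized by some $\Phi$-time.

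With transversality established, the central lamination $(\mathcal{L}^-)^{,0}$ from Section 2.3 is automatically a lamination, its plaques being the $\Phi$-saturations of $\mathcal{L}^-$-plaques. It only remains to identify $(\mathcal{L}^-)^{,0}$ with $\mathcal{L}^{-,0}$ of Definition \ref{lam2}. For one inclusion, a point $\mathtt{N}(h)\in\mathcal{L}^{-,0}_{\mathtt{N}(g)}$ satisfies
\[
\nu(h) = \nu(g) + s_2\nu^-(g),\qquad N(h) = N(g) + s_1\nu^-(g) + t\nu(g),
\]
for some $s_1,s_2,t\in\mathbb{R}$; picking the unique $\Phi$-time $r$ with $\Phi_r \mathtt{N}(g) = (N(g)+t\nu(g),\nu(g))$, the two displayed equations are precisely the defining relations of $\mathcal{L}^-_{\Phi_r\mathtt{N}(g)}$, so $\mathtt{N}(h)$ lies in the $\Phi$-saturation of $\mathcal{L}^-_{\mathtt{N}(g)}$. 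The converse inclusion follows by running the algebra backwards.

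The main technical obstacle will be the bookkeeping that translates between $\tilde{\phi}$-time and $\Phi$-time via the Hölder function $f$ of Theorem \ref{N}; once this dictionary is in place, both assertions of the proposition drop out.
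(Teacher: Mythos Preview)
Your approach is correct but takes a noticeably different route from the paper.  The paper does \emph{not} first establish $\mathcal{L}^-$ as a lamination, build a new chart from $\nu(h^-,g^+)$, verify transversality, and then pass to the central lamination.  Instead it reuses the \emph{same} chart $\amalg_{\mathtt{N}(g)}$ already constructed in Proposition~\ref{lps} and simply observes that the leaves of $\mathcal{L}^{-,0}$ are exactly the level sets of the first coordinate $h^-$; since $p^-\circ\amalg_{\mathtt{N}(g)}(\mathtt{N}(h))=h^-$ does not depend on the basepoint $g$, Property~(1) of Definition~\ref{lam} is trivial, and Property~(2) follows from Proposition~\ref{14}(2) because $h^-=g^-$ characterises $\bigcup_t\mathcal{H}^-_{\tilde{\phi}_t g}$.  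The final clause (that $\mathcal{L}^{-,0}$ is the central lamination of $\mathcal{L}^-$) is dispatched in one line by appeal to Definition~\ref{lam2}.

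What this buys: the paper's argument is much shorter and, crucially, shows $\mathcal{L}^+$ and $\mathcal{L}^{-,0}$ in the \emph{same} chart $\amalg$, which is exactly what is needed for the local product structure in the subsequent theorem.  Your argument is more systematic and has the virtue of actually proving that $\mathcal{L}^-$ is a lamination (something the paper only uses implicitly), but it requires building a second chart and checking the transversality axiom, and you would still need to come back to $\amalg$ to get the product structure with $\mathcal{L}^+$.  Your time-reparametrisation bookkeeping via $f$ is correct but unnecessary for the proposition as stated: the identification of $\tilde{\mathcal{L}}^{-,0}_{\mathtt{N}(g)}$ with the $\Phi$-saturation of $\tilde{\mathcal{L}}^{-}_{\mathtt{N}(g)}$ is a direct algebraic computation (substituting $s_1\mapsto s_1+ts_2$) that does not require matching $\tilde{\phi}$-time to $\Phi$-time.
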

\begin{proof}
We show that the equivalence relation $\mathcal{L}^{-,0}$ on $\mathsf{U}_{\hbox{\tiny $\mathrm{rec}$}}\mathbb{A}$ satisfy properties (1) and (2) of definition \ref{lam} for the local homeomorphism $\amalg$.

\textit{Property} (1): Let $g_1, g_2$ be two points in $\mathsf{U}_{\hbox{\tiny $\mathrm{rec}$}}\mathbb{H}$, $h_1, h_2$ be two points in the intersection $\mathcal{U}_{g_1} \cap$ $\mathcal{U}_{g_2}$ and $p^{+,0}$ be the projection from $\mathcal{U}_{g^-}\times\mathcal{U}_{g^+}\times\mathbb{R}$ onto $\mathcal{U}_{g^+}\times\mathbb{R}$. We see that
\[p^{-}\circ{\amalg}_{\mathtt{N}(g_1)}(\mathtt{N}(h_1)) = p^{-}\circ{\amalg}_{\mathtt{N}(g_1)}(\mathtt{N}(h_2))\]
if and only if 
\[p^{-}\circ{\amalg}_{\mathtt{N}(g_2)}(\mathtt{N}(h_1)) = p^{-}\circ{\amalg}_{\mathtt{N}(g_2)}(\mathtt{N}(h_2))\]
since we have
\[p^{-}\circ{\amalg}_{\mathtt{N}(g_1)}(\mathtt{N}(h_1)) = h_1^- = p^{-}\circ{\amalg}_{\mathtt{N}(g_2)}(\mathtt{N}(h_1))\]
and
\[p^{-}\circ{\amalg}_{\mathtt{N}(g_1)}(\mathtt{N}(h_2)) = h_2^- = p^{-}\circ{\amalg}_{\mathtt{N}(g_2)}(\mathtt{N}(h_2)).\]

\textit{Property} (2):
Let $\{\mathtt{N}(h_i)\}_{i\in\{1, 2,..., n\}}$ be a sequence of points such that for all $i\in\{1,2,...,n-1\}$ we have 
\[\mathtt{N}(h_{i+1})\in\mathcal{U}_{\mathtt{N}(h_i)}\] 
and 
\[p^{-}\circ{\amalg}_{\mathtt{N}(h_i)}(\mathtt{N}(h_i)) = p^{-}\circ{\amalg}_{\mathtt{N}(h_i)}(\mathtt{N}(h_{i+1})).\]
Hence for all $i\in\{1,2,...,n-1\}$ we have $h_i^- = h_{i+1}^-$. Now using proposition \ref{14} we get that
\[{\mathcal{L}}^{-,0}_{\mathtt{N}(h_i)} = {\mathcal{L}}^{-,0}_{\mathtt{N}(h_{i+1})}\]
for all $i$ in $\{1,2,...,n-1\}$. Hence
\[{\mathcal{L}}^{-,0}_{\mathtt{N}(h_{1})} = {\mathcal{L}}^{-,0}_{\mathtt{N}(h_{n})}.\]
Now we show the other direction. Let $h\in\mathsf{U}_{\hbox{\tiny $\mathrm{rec}$}}\mathbb{H}$ such that $\mathtt{N}(h)\in{\mathcal{L}}^{-,0}_{\mathtt{N}(g)}$. Using proposition \ref{14} we get that $h^- = g^-$. Let $\mathcal{V}_{g^+}$ be a connected bounded open neighborhood of $g^+$ in $\partial_\infty\mathbb{H}\setminus\{g^-\}$ containing the point $h^+$ and let $\mathcal{V}_{g^-}$ be a connected open neighborhood of $g^-$ in $\partial_\infty\mathbb{H}\setminus\{g^+\}$ such that $\mathcal{V}_{g^+}\cap\mathcal{V}_{g^-}$ is empty. We denote the sets $\mathcal{V}_{g^\pm} \cap \Lambda_\infty\Gamma$ respectively by $\mathcal{U}_{g^\pm}$, the open subset of $\mathsf{U}_{\hbox{\tiny $\mathrm{rec}$}}\mathbb{H}$ corresponding to the open set $\mathcal{U}_{g^-} \times \mathcal{U}_{g^+} \times \mathbb{R}$ by $\mathcal{U}_{g}$ and the open set $\mathtt{N}(\mathcal{U}_{g})$ around $\mathtt{N}(g)$ by $\mathcal{U}_{\mathtt{N}(g)}$. Now we consider the chart $\left(\mathcal{U}_{\mathtt{N}(g)}, {\amalg}_{\mathtt{N}(g)}\right)$ and notice that 
\[p^{-}\circ{\amalg}_{\mathtt{N}(g)}(\mathtt{N}(g)) = g^- = h^- = p^{-}\circ{\amalg}_{\mathtt{N}(g)}(\mathtt{N}(h)).\]
Therefore we conclude that ${\mathcal{L}}^{-,0}$ defines a lamination with plaque neighborhoods given by the image of the open sets $\mathcal{U}_{g^+}\times\mathbb{R}$ for $g^+$ in $\Lambda_\infty\Gamma\setminus\{g^+\}$.

Now the fact that ${\mathcal{L}}^{-,0}$ is the central lamination corresponding to the lamination $\mathcal{L}^-$ follows from definition \ref{lam2}.
\end{proof}

\begin{theorem}
The laminations $({\mathcal{L}}^{+},{\mathcal{L}}^{-,0})$ and $({\mathcal{L}}^{-},{\mathcal{L}}^{+,0})$ define a local product structure on $\mathsf{U}_{\hbox{\tiny $\mathrm{rec}$}}\mathbb{A}$.
\end{theorem}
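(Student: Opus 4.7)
The plan is to reuse, almost verbatim, the local homeomorphism $\amalg_{\mathtt{N}(g)}\colon \mathcal{U}_{\mathtt{N}(g)}\to \mathcal{U}_{g^-}\times\mathcal{U}_{g^+}\times\mathbb{R}$ constructed in Proposition \ref{lps}, together with the plaque descriptions already extracted in the proofs of Propositions \ref{lps1} and \ref{lps2}. The key observation is that in this chart the two laminations already live on complementary factors, so the product chart is obtained simply by regrouping the three coordinates.

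More precisely, for the pair $(\mathcal{L}^+,\mathcal{L}^{-,0})$ I would proceed as follows. The proof of Proposition \ref{lps1} showed that the plaque of $\mathcal{L}^+$ through $\mathtt{N}(g)$ in the chart $\amalg_{\mathtt{N}(g)}$ is precisely the slice $\mathcal{U}_{g^-}\times\{(g^+,0)\}$, i.e.\ the fibre of the projection $p^{+,0}$. Dually, the proof of Proposition \ref{lps2} showed that the plaque of $\mathcal{L}^{-,0}$ through $\mathtt{N}(g)$ is the slice $\{g^-\}\times\mathcal{U}_{g^+}\times\mathbb{R}$, i.e.\ the fibre of the projection $p^-$. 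I would therefore set
\[
\mathcal{U}_1\defeq \mathcal{U}_{g^-},\qquad \mathcal{U}_2\defeq \mathcal{U}_{g^+}\times\mathbb{R},
\]
and define $\mathfrak{f}_{\mathtt{N}(g)}\defeq \amalg_{\mathtt{N}(g)}^{-1}\colon \mathcal{U}_1\times\mathcal{U}_2\to\mathcal{U}_{\mathtt{N}(g)}$. This is a homeomorphism by Proposition \ref{lps}, and it serves simultaneously as a chart for $\mathcal{L}^+$ (varying the first factor keeps $h^+$ and $t$ fixed) and for $\mathcal{L}^{-,0}$ (varying the second factor keeps $h^-$ fixed), which is exactly the local product structure condition.

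For the pair $(\mathcal{L}^-,\mathcal{L}^{+,0})$ I would invoke the statements symmetric to Propositions \ref{lps1} and \ref{lps2}, obtained by interchanging the roles of $\nu^+$ and $\nu^-$ (hence of $h^+$ and $h^-$) throughout; the same arguments go through using the other half of Proposition \ref{14} and Corollary \ref{nu+}. These identify the $\mathcal{L}^-$-plaques with the fibres $\{h^-\}\times\mathcal{U}_{g^+}\times\{t\}$ and the $\mathcal{L}^{+,0}$-plaques with the fibres $\mathcal{U}_{g^-}\times\{h^+\}\times\mathbb{R}$. Regrouping now as
\[
\mathcal{U}'_1\defeq \mathcal{U}_{g^+},\qquad \mathcal{U}'_2\defeq \mathcal{U}_{g^-}\times\mathbb{R},
\]
and composing $\amalg_{\mathtt{N}(g)}^{-1}$ with the obvious factor-swap homeomorphism $\mathcal{U}'_1\times\mathcal{U}'_2\to\mathcal{U}_{g^-}\times\mathcal{U}_{g^+}\times\mathbb{R}$ produces the desired chart.

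No serious obstacle is expected: all of the analytic content (existence of the three-parameter chart, independence of the three coordinates, identification of the plaques with coordinate fibres) has already been settled in Propositions \ref{lps}, \ref{lps1}, and \ref{lps2}. The only thing to check carefully is the bookkeeping that the plaque neighbourhoods exhibited in those proofs really coincide with the coordinate fibres of $\amalg$, so that regrouping the factors yields a genuine product chart for \emph{both} laminations simultaneously. That verification is immediate from the explicit formula for $\amalg_{\mathtt{N}(g)}$.
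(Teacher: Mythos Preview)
Your proposal is correct and follows exactly the paper's own approach: the paper's proof is a one-line citation of Propositions \ref{lps}, \ref{lps1}, and \ref{lps2}, and what you wrote is precisely the unpacking of how those three propositions combine to give the required product chart. The only cosmetic point is that for the pair $(\mathcal{L}^-,\mathcal{L}^{+,0})$ the original chart $\amalg_{\mathtt{N}(g)}$ (whose third coordinate involves $\nu(g^-,h^+)$) does not directly cut out the $\mathcal{L}^-$-plaques as coordinate fibres, so one really does need the symmetric chart you allude to; but you already flagged this, and the paper handles it with the same ``similarly'' handwave.
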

\begin{proof}
Using proposition \ref{lps}, \ref{lps1} and \ref{lps2} we get that $({\mathcal{L}}^{+},{\mathcal{L}}^{-,0})$ defines a local product structure. In a similar way one can show that $({\mathcal{L}}^{-},{\mathcal{L}}^{+,0})$ also defines a local product structure.
\end{proof}

\begin{proposition}
The laminations are equivariant under the action of $\Gamma$.
\end{proposition}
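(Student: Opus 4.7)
The plan is to unwind the explicit formulas for the leaves in Definitions \ref{lam1} and \ref{lam2} and transport them through the equivariance properties of $N$, $\nu$ and $\nu^\pm$ already established. Concretely, I would show that for every $\gamma\in\Gamma$ and every $g\in\mathsf{U}_{\hbox{\tiny rec}}\mathbb{H}$ we have
\[
\gamma\cdot\mathcal{L}^{\pm}_{\mathtt{N}(g)} \;=\; \mathcal{L}^{\pm}_{\mathtt{N}(\mathtt{L}(\gamma)g)},\qquad
\gamma\cdot\mathcal{L}^{\pm,0}_{\mathtt{N}(g)} \;=\; \mathcal{L}^{\pm,0}_{\mathtt{N}(\mathtt{L}(\gamma)g)}.
\]
Under the identification between $\mathsf{Aff}(\mathbb{A})$ and $\mathsf{GL}(\mathbb{V})\ltimes\mathbb{V}$, the element $\gamma$ acts on $\mathbb{A}\times\mathbb{V}$ by $\gamma\cdot(X,v)=(\mathtt{L}(\gamma)X+\mathtt{u}(\gamma),\,\mathtt{L}(\gamma)v)$, so in particular the linear part acts on both the base point and the direction.

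First I would take a generic point of $\tilde{\mathcal{L}}^{+}_{\mathtt{N}(g)}$, of the form $\bigl(N(g)+s_1\nu^+(g),\,\nu(g)+s_2\nu^+(g)\bigr)$, apply $\gamma$ and distribute $\mathtt{L}(\gamma)$ through. Using Theorem \ref{N}(2) in the form $\mathtt{L}(\gamma)N(g)+\mathtt{u}(\gamma)=N(\mathtt{L}(\gamma)g)$, together with \eqref{nu2} and \eqref{limit2} giving $\mathtt{L}(\gamma)\nu(g)=\nu(\mathtt{L}(\gamma)g)$ and $\mathtt{L}(\gamma)\nu^{+}(g)=\nu^{+}(\mathtt{L}(\gamma)g)$, the image rewrites as
\[
\bigl(N(\mathtt{L}(\gamma)g)+s_1\nu^+(\mathtt{L}(\gamma)g),\;\nu(\mathtt{L}(\gamma)g)+s_2\nu^+(\mathtt{L}(\gamma)g)\bigr),
\]
which by definition belongs to $\tilde{\mathcal{L}}^{+}_{\mathtt{N}(\mathtt{L}(\gamma)g)}$. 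Reversing the roles of $\gamma$ and $\gamma^{-1}$ gives the opposite inclusion, so the big partitions $\tilde{\mathcal{L}}^{\pm}$ and $\tilde{\mathcal{L}}^{\pm,0}$ of $\mathsf{U}\mathbb{A}$ are $\Gamma$-equivariant; the central versions are handled identically, the extra term $t\,\nu(g)$ transforming to $t\,\nu(\mathtt{L}(\gamma)g)$ by the same equations.

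To descend to the actual laminations of $\mathsf{U}_{\hbox{\tiny rec}}\mathbb{A}$, I would invoke the $\Gamma$-invariance of $\mathsf{U}_{\hbox{\tiny rec}}\mathbb{A}$ itself (the non-wandering set for $\Phi$ is preserved by the deck action, since $\Gamma$ acts by automorphisms commuting with $\Phi$). Consequently
\[
\gamma\cdot\mathcal{L}^{+}_{\mathtt{N}(g)}
=\gamma\cdot\bigl(\tilde{\mathcal{L}}^{+}_{\mathtt{N}(g)}\cap\mathsf{U}_{\hbox{\tiny rec}}\mathbb{A}\bigr)
=\bigl(\gamma\cdot\tilde{\mathcal{L}}^{+}_{\mathtt{N}(g)}\bigr)\cap\mathsf{U}_{\hbox{\tiny rec}}\mathbb{A}
=\tilde{\mathcal{L}}^{+}_{\mathtt{N}(\mathtt{L}(\gamma)g)}\cap\mathsf{U}_{\hbox{\tiny rec}}\mathbb{A}
=\mathcal{L}^{+}_{\mathtt{N}(\mathtt{L}(\gamma)g)},
\]
and analogously for the other three families.

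There is no real obstacle here; the whole statement is a direct consequence of the equivariance of the neutralised section and of $\nu,\nu^\pm$. The only minor point of care is that the formula must be read in the semidirect product: one must keep track that the translational part $\mathtt{u}(\gamma)$ is absorbed precisely by Theorem \ref{N}(2), while the linear part $\mathtt{L}(\gamma)$ takes care of each of the three vector contributions $\nu^\pm(g)$, $\nu(g)$ and the displacement along the leaf. Once this bookkeeping is written out cleanly, equivariance of all four laminations follows simultaneously.
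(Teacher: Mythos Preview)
Your proposal is correct and follows essentially the same argument as the paper: pick a generic point of $\tilde{\mathcal{L}}^{+}_{\mathtt{N}(g)}$, apply $\gamma$, and use the equivariance of $N$, $\nu$, $\nu^{\pm}$ (Theorem~\ref{N}(2), \eqref{nu2}, \eqref{limit2}) to land in $\tilde{\mathcal{L}}^{+}_{\gamma\cdot\mathtt{N}(g)}$, then intersect with the $\Gamma$-invariant set $\mathsf{U}_{\hbox{\tiny rec}}\mathbb{A}$. Your write-up is in fact slightly more explicit than the paper's (you separate $\mathtt{L}(\gamma)$ from $\mathtt{u}(\gamma)$, spell out the reverse inclusion, and treat the central leaves), but the substance is identical.
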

\begin{proof}
Let $Z$ be in $\mathsf{U}_{\hbox{\tiny $\mathrm{rec}$}}\mathbb{A}$ such that $Z = \mathtt{N}(g)$ for some $g\in\mathsf{U}_{\hbox{\tiny $\mathrm{rec}$}}\mathbb{H}$ and $W\in{\mathcal{L}}_Z^{+}$. Therefore there exist real numbers $s_1$, $s_2$ such that \[W = (\tilde{{N}}(g) + s_1 \nu^+(g), \nu(g) + s_2 \nu^+(g)).\] Now for all $\gamma$ in $\Gamma$ we get,
\begin{align*}
\gamma . Z & = \gamma . \mathtt{N}(g)\\
&= \mathtt{N}(\gamma . g)
\end{align*}
and 
\begin{align*}
\gamma . W &= \gamma . (\tilde{{N}}(g) + s_1 \nu^+(g), \nu(g) + s_2 \nu^+(g))\\
&= (\gamma . \tilde{{N}}(g) + s_1 . \gamma . \nu^+(g), \gamma . \nu(g) + s_2 . \gamma . \nu^+(g))\\
&= (\tilde{{N}}(\gamma . g) + s_1 \nu^+(\gamma . g), \nu(\gamma . g) + s_2 \nu^+(\gamma . g)).
\end{align*}
Therefore $\gamma . W\in\tilde{\mathcal{L}}^+_{\gamma . Z}$ and $\mathsf{U}_{\hbox{\tiny $\mathrm{rec}$}}\mathbb{A}$ is invariant under the action of $\Gamma$ implies that $\gamma . W\in{\mathcal{L}}^+_{\gamma . Z}$. Hence we get that for all $\gamma$ in $\Gamma$,
\[{\mathcal{L}}^+_{\gamma . Z} = \gamma . {\mathcal{L}}^+_Z.\]
Similarly one can show that for all $\gamma$ in $\Gamma$, 
\[{\mathcal{L}}^-_{\gamma . Z} = \gamma . {\mathcal{L}}^-_Z.\]
\end{proof}

\begin{proposition}
The laminations are equivariant under the geodesic flow.
\end{proposition}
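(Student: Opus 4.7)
The plan is to reduce flow-equivariance to a direct computation using the parametrization of the leaves in definitions \ref{lam1} and \ref{lam2}, combined with the three key transformation laws: Theorem \ref{N} for $N$, equation \ref{nu1} for $\nu$, and equation \ref{limit1} for $\nu^{\pm}$. The essential observation is that the flow $\Phi_t$ on the image $\mathtt{N}(\mathsf{U}_{\hbox{\tiny rec}}\mathbb{H})$ is simply a time-reparametrization of the flow $\tilde{\phi}$ on $\mathsf{U}_{\hbox{\tiny rec}}\mathbb{H}$, with speed governed by the H\"older function $f$.

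First I would fix $Z = \mathtt{N}(g)$ and $W = (N(g) + s_1\nu^+(g), \nu(g) + s_2\nu^+(g))$ in $\mathcal{L}^+_Z$, and compute directly
\[\Phi_t W = \bigl(N(g) + t\nu(g) + (s_1+ts_2)\nu^+(g),\ \nu(g) + s_2\nu^+(g)\bigr).\]
Next I would find the unique $h\in\mathsf{U}_{\hbox{\tiny rec}}\mathbb{H}$ with $\mathtt{N}(h) = \Phi_t Z$. Since $f$ is strictly positive, the map $t'\mapsto \tau(t')\defeq\int_0^{t'} f(\tilde{\phi}_s g)\,ds$ is a homeomorphism of $\mathbb{R}$; choose $t'\defeq\tau^{-1}(t)$ and set $h\defeq\tilde{\phi}_{t'}g$. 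Theorem \ref{N} and equation \ref{nu1} then give $\mathtt{N}(h) = (N(g)+t\nu(g),\nu(g)) = \Phi_t Z$.

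Now I would apply the scaling rule $\nu^+(\tilde{\phi}_{t'}g) = e^{t'}\nu^+(g)$ from equation \ref{limit1} to rewrite
\[\mathcal{L}^+_{\mathtt{N}(h)} = \bigl\{(N(g)+t\nu(g) + u_1 e^{t'}\nu^+(g),\ \nu(g) + u_2 e^{t'}\nu^+(g)) \mid u_1,u_2\in\mathbb{R}\bigr\}\cap\mathsf{U}_{\hbox{\tiny rec}}\mathbb{A}\]
and observe that the choice $u_1 = (s_1+ts_2)e^{-t'}$, $u_2 = s_2 e^{-t'}$ exhibits $\Phi_t W$ as an element of $\tilde{\mathcal{L}}^+_{\mathtt{N}(h)}$. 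Since $\mathsf{U}_{\hbox{\tiny rec}}\mathbb{A}$ is invariant under $\Phi$ (the non-wandering set of a flow is always flow-invariant), $\Phi_t W$ lies in $\mathcal{L}^+_{\Phi_t Z}$. Reversing the roles of $W$ and $\Phi_t W$ yields the opposite inclusion. The argument for $\mathcal{L}^-$ is identical, using $\nu^-(\tilde{\phi}_{t'}g) = e^{-t'}\nu^-(g)$ in place of the positive rule.

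The only nontrivial step is the identification $\Phi_t Z = \mathtt{N}(\tilde{\phi}_{t'} g)$, which I expect to be the main obstacle in the sense that everything else is a bookkeeping verification; but this identification is an immediate consequence of Theorem \ref{N} once one recognizes that $f>0$ forces $\tau$ to be a bijection. Everything else reduces to substituting the transformation laws of $N$, $\nu$, and $\nu^{\pm}$ into the defining expressions for the leaves.
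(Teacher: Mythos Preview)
Your proof is correct and follows essentially the same direct computation as the paper. The paper's version is slightly shorter: it skips the identification of $h=\tilde\phi_{t'}g$ and the use of equation \ref{limit1}, observing instead that since the second coordinate of $\tilde\Phi_t Z$ is still $\nu(g)$, the leaf $\tilde{\mathcal L}^+_{\tilde\Phi_t Z}$ is parametrized by the same null direction $\nu^+(g)$, so $\tilde\Phi_t W=\bigl((N(g)+t\nu(g))+(s_1+ts_2)\nu^+(g),\,\nu(g)+s_2\nu^+(g)\bigr)$ visibly lies in it; your reparametrization step via $\tau^{-1}$ is valid but unnecessary.
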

\begin{proof}
Let $Z$ be in $\mathsf{U}_{\hbox{\tiny $\mathrm{rec}$}}\mathbb{A}$ such that $Z = \mathtt{N}(g)$ for some $g\in\mathsf{U}_{\hbox{\tiny $\mathrm{rec}$}}\mathbb{H}$ and $W\in{\mathcal{L}}_Z^{+}$. Therefore there exist real numbers $s_1$, $s_2$ such that \[W = (\tilde{{N}}(g) + s_1 \nu^+(g), \nu(g) + s_2 \nu^+(g)).\] 
We have for all real number $t$,
\begin{align*}
\tilde{\Phi}_t Z &= \tilde{\Phi}_t \mathtt{N}(g)\\
&= ({N}(g) + t\nu(g), \nu(g))
\end{align*}
and
\begin{align*}
\tilde{\Phi}_t W &= \tilde{\Phi}_t ({N}(g) + s_1 \nu^+(g), \nu(g) + s_2 \nu^+(g))\\
&= ({N}(g) + s_1 \nu^+(g) + t . (\nu(g) + s_2 \nu^+(g)), \nu(g) + s_2 \nu^+(g))\\
&= (({N}(g) + t\nu(g)) + (s_1 + ts_2) \nu^+(g), \nu(g) + s_2 \nu^+(g)).
\end{align*}
Therefore for all real number $t$ we have $\tilde{\Phi}_t . W\in\tilde{\mathcal{L}}^+_{\tilde{\Phi}_t . Z}$ and $\mathsf{U}_{\hbox{\tiny $\mathrm{rec}$}}\mathbb{A}$ is invariant under the geodesic flow implies that $\tilde{\Phi}_t . W\in{\mathcal{L}}^+_{\tilde{\Phi}_t . Z}$. Hence we get that for all real number $t$,
\begin{align*}
{\mathcal{L}}^+_{\tilde{\Phi}_t . Z} = \tilde{\Phi}_t . {\mathcal{L}}^+_Z.
\end{align*}
Similarly one can show that for all real number $t$, 
\begin{align*}
{\mathcal{L}}^-_{\tilde{\Phi}_t . Z} = \tilde{\Phi}_t . {\mathcal{L}}^-_Z.
\end{align*}
\end{proof}
\begin{definition}\label{lem}
We denote the projection of $\mathcal{L}^{\pm},\mathcal{L}^{\pm,0}$ on the space $\mathsf{U}_{\hbox{\tiny $\mathrm{rec}$}}\mathsf{M}$ by $\underline{\mathcal{L}}^{\pm}, \underline{\mathcal{L}}^{\pm,0}$ respectively.
\end{definition}

Now we define the notion of a leaf lift. The leaf lift is a map from the leaves of the lamination through a point, to the tangent space of $\mathsf{U}\mathbb{A}$ at that point. We will use this leaf lift to compare distance between the metric $\tilde{d}$ and the norm on the tangent space on any point of the leaves. We define the leaf lift as follows:\\
The $\textit{positive}$ $\textit{leaf}$ $\textit{lift}$ is the map,
\begin{align}
i^+_{\mathtt{N}(g)} &: \tilde{\mathcal{L}}^{+}_{\mathtt{N}(g)} \longrightarrow \mathsf{T}_{\mathtt{N}(g)}\mathsf{U}\mathbb{A}\\
\notag({N}(g) + s_1 \nu^+(g) &,\nu(g) + s_2 \nu^+(g))
\longmapsto (s_1 \nu^+(g), s_2 \nu^+(g)).
\end{align}
and the $\textit{negative}$ $\textit{leaf}$ $\textit{lift}$ is the map,
\begin{align}
i^-_{\mathtt{N}(g)} &: \tilde{\mathcal{L}}^{-}_{\mathtt{N}(g)} \longrightarrow \mathsf{T}_{\mathtt{N}(g)}\mathsf{U}\mathbb{A}\\
\notag({N}(g) + s_1 \nu^-(g) &,\nu(g) + s_2 \nu^-(g))
\longmapsto (s_1 \nu^-(g), s_2 \nu^-(g)).
\end{align}
where we identify $\mathsf{T}_{\mathtt{N}(g)}\mathsf{U}\mathbb{A}$ with $\mathsf{T}_{{N}(g)}\mathbb{A} \times \mathsf{T}_{\nu(g)}\mathsf{S}^{1}$.

\subsection{Contraction Properties}

In this subsection we will prove that the leaves denoted by $\mathcal{L}^+$ contracts in the forward direction of the geodesic flow and the leaves denoted by $\mathcal{L}^-$ contracts in the backward direction of the geodesic flow. We will prove it only for the forward direction of the flow. The other case will follow similarly. We start with the following construction whose raison d'$\hat{\text{e}}$tre would be apparent in proposition \ref{lip}.

\begin{proposition}\label{prelip}
There exists a $\Gamma$-invariant map from $\mathsf{U}_{\hbox{\tiny $\mathrm{rec}$}}\mathbb{A}$ into the space of euclidean metrics on $\mathbb{R}^3\times\mathbb{R}^3$ sending $Z$ to $\|.\|_Z$ such that for all positive integer $n$, there exists a positive real number $t_n$ satisfying the following property: if $t>t_n$, $Z\in\mathsf{U}_{\hbox{\tiny $\mathrm{rec}$}}\mathbb{A}$ and $W\in\tilde{\mathcal{L}}^+_\text{Z}$ then
\begin{align*}
{\| i^+_{\tilde{\Phi}_t Z}(\tilde{\Phi}_t W) - i^+_{\tilde{\Phi}_t Z}(\tilde{\Phi}_t Z) \|}_{\tilde{\Phi}_t Z} \leqslant \frac{1}{2^n}{\| i^+_Z(W) - i^+_Z(Z)\|}_Z.
\end{align*}
\end{proposition}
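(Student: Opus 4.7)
The plan is to construct a continuous $\Gamma$-invariant Euclidean norm on $\mathsf{U}_{\hbox{\tiny $\mathrm{rec}$}}\mathbb{A}\times\mathbb{R}^3\times\mathbb{R}^3$ by averaging, and then convert the eigenvalue equation $\nu^+(\tilde\phi_tg)=e^t\nu^+(g)$ into the required contraction estimate. Fix any Euclidean norm $\|\cdot\|_E$ on $\mathbb{R}^3\times\mathbb{R}^3$. Pick a continuous $\psi\geq 0$ on $\mathsf{U}_{\hbox{\tiny $\mathrm{rec}$}}\mathbb{A}$, supported in $U$ and strictly positive on the fundamental domain $D$; since $\{\gamma U\}_\gamma$ is locally finite and covers $\mathsf{U}_{\hbox{\tiny $\mathrm{rec}$}}\mathbb{A}$, the sum $\Psi\defeq\sum_{\gamma}\psi\circ\gamma^{-1}$ is continuous, strictly positive and $\Gamma$-invariant, so $\rho_0\defeq\psi/\Psi$ satisfies $\sum_\gamma\rho_0(\gamma^{-1}Z)=1$ at every $Z$. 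I would then define $\|(a,b)\|_Z^2\defeq\sum_\gamma\rho_0(\gamma^{-1}Z)\|(\mathtt{L}(\gamma)^{-1}a,\mathtt{L}(\gamma)^{-1}b)\|_E^2$; local finiteness gives continuity, and the reindexing $\gamma\mapsto\eta\gamma$ yields the equivariance $\|(\mathtt{L}(\eta)a,\mathtt{L}(\eta)b)\|_{\eta Z}=\|(a,b)\|_Z$.

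Next I would compute the effect of the flow on the leaf lift. Writing $Z=\mathtt{N}(g)$ and $W=(N(g)+s_1\nu^+(g),\nu(g)+s_2\nu^+(g))\in\tilde{\mathcal{L}}^+_Z$, one has $i^+_Z(W)-i^+_Z(Z)=(s_1\nu^+(g),s_2\nu^+(g))$. Since $f$ is continuous, positive and $\mathtt{L}(\Gamma)$-invariant by theorem \ref{N}, it descends to a strictly positive continuous function on the compact quotient $\mathsf{U}_{\hbox{\tiny $\mathrm{rec}$}}\Sigma$, with finite maximum $f_{\max}$. For $t>0$ there is a unique $\tau=\tau(t,g)>0$ solving $\int_0^\tau f(\tilde\phi_sg)\,ds=t$, and $\tau\geq t/f_{\max}$. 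Setting $h\defeq\tilde\phi_\tau g$, theorem \ref{N} together with equation \ref{nu1} gives $\mathtt{N}(h)=(N(g)+t\nu(g),\nu(g))=\tilde\Phi_tZ$, while equation \ref{limit1} gives $\nu^+(h)=e^\tau\nu^+(g)$. A straightforward expansion of $\tilde\Phi_tW$ and the substitution $\nu^+(g)=e^{-\tau}\nu^+(h)$ in both components then yield $i^+_{\tilde\Phi_tZ}(\tilde\Phi_tW)-i^+_{\tilde\Phi_tZ}(\tilde\Phi_tZ)=e^{-\tau}\bigl((s_1+ts_2)\nu^+(h),\,s_2\nu^+(h)\bigr)$.

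To finish, I would prove uniform norm equivalence: for each $g$, the map $(\alpha,\beta)\mapsto\|(\alpha\nu^+(g),\beta\nu^+(g))\|_{\mathtt{N}(g)}$ is a norm on $\mathbb{R}^2$ whose equivalence constants with the standard Euclidean norm depend continuously on $g$; by the $\Gamma$-invariance of $\|\cdot\|_Z$, equation \ref{limit2}, and the equivariance $\mathtt{N}\circ\mathtt{L}(\gamma)=\gamma\mathtt{N}$ of theorem \ref{N}, these constants descend to the compact $\mathsf{U}_{\hbox{\tiny $\mathrm{rec}$}}\Sigma$, so there exist uniform $c,C>0$ with $c\sqrt{\alpha^2+\beta^2}\leq\|(\alpha\nu^+(g),\beta\nu^+(g))\|_{\mathtt{N}(g)}\leq C\sqrt{\alpha^2+\beta^2}$ for all $g$ and $(\alpha,\beta)$. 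Together with the trivial estimate $|s_1+ts_2|^2+s_2^2\leq 2(1+t)^2(s_1^2+s_2^2)$, the contraction ratio is dominated by $(C\sqrt{2}/c)(1+t)e^{-t/f_{\max}}$ uniformly in $Z$ and $W$; picking $t_n$ so that the right side is $\leq 2^{-n}$ for $t>t_n$ completes the proof. The delicate point, and the main obstacle, is the reparametrization $t\mapsto\tau(t,g)$: because $\tilde\Phi$ does not descend through $\mathtt{N}$ at unit speed but at the $f$-distorted speed of theorem \ref{N}, correctly identifying $\tilde\Phi_tZ$ with $\mathtt{N}(h)$ is precisely what unlocks the clean scaling $\nu^+(h)=e^\tau\nu^+(g)$ and lets the $e^{-\tau}$ factor dominate the Jordan-type $(s_1+ts_2)$ distortion on the leaf.
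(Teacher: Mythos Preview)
Your proposal is correct. The dynamical core of your argument coincides with the paper's: you identify $\tilde\Phi_t\mathtt{N}(g)$ with $\mathtt{N}(\tilde\phi_\tau g)$ via the $f$-reparametrization from Theorem~\ref{N}, use the eigenvalue relation $\nu^+(\tilde\phi_\tau g)=e^\tau\nu^+(g)$ to extract the factor $e^{-\tau}$, bound the Jordan-type distortion $(s_1+ts_2)^2+s_2^2\leq 2(1+t)^2(s_1^2+s_2^2)$, and finally use boundedness of $f$ on the compact quotient $\mathsf{U}_{\hbox{\tiny $\mathrm{rec}$}}\Sigma$ to get $\tau\geq t/f_{\max}$ so that $(1+t)e^{-\tau}\to 0$ uniformly.

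Where you genuinely diverge from the paper is in the construction of the $\Gamma$-invariant family $\|\cdot\|_Z$. The paper does not average: it simply declares the six vectors $(\nu^\alpha(g),0)$, $(0,\nu^\alpha(g))$ for $\alpha\in\{\cdot,+,-\}$ to be an orthonormal basis of $\mathsf{T}_{\mathtt{N}(g)}(\mathbb{A}\times\mathbb{V})$. Equivariance then follows immediately from equations~\ref{nu2} and~\ref{limit2} and Theorem~\ref{N}, and one gets the exact identity $\|i^+_Z(W)-i^+_Z(Z)\|_Z=\sqrt{s_1^2+s_2^2}$ with no constants. Your partition-of-unity averaging of a fixed $\|\cdot\|_E$ also produces a $\Gamma$-invariant norm, but it is not adapted to the frame, which is why you need the extra compactness step to obtain uniform constants $c,C$ with $c\sqrt{\alpha^2+\beta^2}\leq\|(\alpha\nu^+(g),\beta\nu^+(g))\|_{\mathtt{N}(g)}\leq C\sqrt{\alpha^2+\beta^2}$. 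Both routes are valid; the paper's is shorter and tailored to the situation, while yours is a standard soft construction that would work in settings where no natural moving frame is available.
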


\begin{proof}
Let $\langle | \rangle_{\mathtt{N}(g)}$ be a positive definite bilinear form on the tangent space $\mathsf{T}_{\mathtt{N}(g)}(\mathbb{A} \times \mathbb{V})$ satisfying the following properties,
\begin{align*}
&\text{1. }\langle (\nu^{\alpha}(g),0)| (\nu^{\beta}(g),0) \rangle_{\mathtt{N}(g)} = \langle (0,\nu^{\alpha}(g))| (0,\nu^{\beta}(g)) \rangle_{\mathtt{N}(g)} = {\delta}_{\alpha\beta},\\
&\text{2. }\langle (\nu^{\alpha}(g),0)| (0,\nu^{\beta}(g)) \rangle_{\mathtt{N}(g)} = \langle (0,\nu^{\alpha}(g))| (\nu^{\beta}(g),0) \rangle_{\mathtt{N}(g)} = 0.
\end{align*}
where ${\delta}_{\alpha\beta}$ is the dirac delta function with $\alpha,\beta$ in $\{., +, -\}$. We define the map $\|.\|$ as follows, 
\begin{align*}
\| X \|_{\mathtt{N}(g)} \defeq \sqrt{\langle X | X \rangle}_{\mathtt{N}(g)},
\end{align*}
where $X$ is in $\mathsf{T}_{\mathtt{N}(g)}(\mathbb{A} \times \mathbb{V})$.
Now from equation \ref{nu2}, equation \ref{limit2} and theorem \ref{N} we get that $\|.\|$ is $\Gamma$-invariant, that is,
\begin{align*}
\| \gamma X \|_{\gamma\mathtt{N}(g)} = \| X \|_{\mathtt{N}(g)}.
\end{align*}
Let $Z = \mathtt{N}(g)$ be in $\mathsf{U}_{\hbox{\tiny $\mathrm{rec}$}}\mathbb{A}$ and $W\in\tilde{\mathcal{L}}^+_\text{Z}$. Therefore there exists real numbers $s_1$ and $s_2$ such that 
\[W = ({N}(g) + s_1\nu^+(g) ,\nu(g) + s_2\nu^+(g) ).\] 
Hence the norm is
\begin{align} \label{15}
&{\| i^+_Z(W) - i^+_Z(Z)\|}_Z = {\| (s_1\nu^+(g), s_2\nu^+(g)) \|}_Z = \sqrt{s_1^2 + s_2^2}.
\end{align}
We note that $\tilde{\Phi}_t Z = ({N}(g) + t\nu(g),\nu(g))$ and using theorem \ref{N} we get that there exists a positive real number $t_1$ such that 
\[{N}(g) + t\nu(g)={N}(\tilde{\phi}_{t_1}g).\] 
Moreover $t$ and $t_1$ are related by the following formula,
\begin{align*}
t = \int\limits_{0}^{t_1} {f}(\tilde{\phi}_s g)ds.
\end{align*}
Therefore we have
\begin{align*}
\| i^+_{\tilde{\Phi}_t Z}(\tilde{\Phi}_t W) &- i^+_{\tilde{\Phi}_t Z}(\tilde{\Phi}_t Z)\|_{\tilde{\Phi}_t Z} = {\| ((s_1 + ts_2)\nu^+(g), s_2\nu^+(g)) \|}_{\tilde{\Phi}_t Z}\\
&= {\| ((s_1 + ts_2)\nu^+(g), s_2\nu^+(g)) \|}_{\mathtt{N}(\phi_{t_1}g)}\\
&= {\sqrt{( s_1 + ts_2 )^2 + s_2^2}} \text{ . } {\|(\nu^+(g),0) \|}_{\mathtt{N}(\phi_{t_1}g)}\\
&= {\sqrt{( s_1 + ts_2 )^2 + s_2^2}} \text{ . } {\| e^{-t_1}(\nu^+(\phi_{t_1}g),0) \|}_{\mathtt{N}(\phi_{t_1}g)}
\end{align*}
Hence the norm is
\begin{align}\label{16}
\| i^+_{\tilde{\Phi}_t Z}(\tilde{\Phi}_t W) - i^+_{\tilde{\Phi}_t Z}(\tilde{\Phi}_t Z)\|_{\tilde{\Phi}_t Z} \notag &= {\sqrt{( s_1 + ts_2 )^2 + s_2^2}}\text{ . }e^{-t_1}\\
&\leqslant \sqrt{2}\sqrt{s_1^2 + s_2^2}(1+t)e^{-t_1}.
\end{align}
We also know that ${\mathsf{U}_{\hbox{\tiny $\mathrm{rec}$}} \Sigma}$ is compact. Hence ${f}$ is bounded on $\mathsf{U}_{\hbox{\tiny $\mathrm{rec}$}}\mathbb{H}$. Therefore there exists a constant $c_1$ such that
\[t = \int\limits_{0}^{t_1} {f}(\tilde{\phi}_s g)ds \leqslant  \int\limits_{0}^{t_1} c_1 ds = c_1t_1.\]
We choose a constant $c$ bigger than $\max\{1,2c_1\}$ and get that
\begin{align}\label{17}
(1 + t) e^{-t_1} \leqslant c e^{-\frac{t}{2c_1}}.
\end{align}
Now by combining equation \ref{15}, inequalities \ref{16} and \ref{17} we get that 
\begin{align*}
{\| i^+_{\tilde{\Phi}_t Z}(\tilde{\Phi}_t W) - i^+_{\tilde{\Phi}_t Z}(\tilde{\Phi}_t Z) \|}_{\tilde{\Phi}_t Z} \leqslant \sqrt{2}c e^{-\frac{t}{2c_1}}{\| i^+_Z(W) - i^+_Z(Z)\|}_Z.
\end{align*}
Hence for all positive integer $n$, there exists $t_n\in\mathbb{R}$ such that if $t>t_n$, $Z\in\mathsf{U}_{\hbox{\tiny $\mathrm{rec}$}}\mathbb{A}$ and $W\in\mathcal{L}^+_\text{Z}$ then
\begin{align*}
{\| i^+_{\tilde{\Phi}_t Z}(\tilde{\Phi}_t W) - i^+_{\tilde{\Phi}_t Z}(\tilde{\Phi}_t Z) \|}_{\tilde{\Phi}_t Z} \leqslant \frac{1}{2^n}{\| i^+_Z(W) - i^+_Z(Z)\|}_Z.
\end{align*}
\end{proof}

\begin{proposition} \label{lip}
Let $d$ be a $\Gamma$-invariant distance on $\mathsf{U}_{\hbox{\tiny $\mathrm{rec}$}}\mathbb{A}$ which is locally bilipschitz equivalent to an euclidean distance and let $\|.\|$ be the $\Gamma$-invariant map from $\mathsf{U}_{\hbox{\tiny $\mathrm{rec}$}}\mathbb{A}$ to the space of euclidean metrics on $\mathbb{R}^3 \times \mathbb{R}^3$ as constructed in the proof of proposition \ref{prelip}. There exist positive constants $K$ and $\alpha$ such that for any $Z\in\mathsf{U}_{\hbox{\tiny $\mathrm{rec}$}}\mathbb{A}$ and for any $W\in\mathcal{L}^+_{Z}$, the following statements are true,
\begin{align*}
&\text{1. If  }d{(W,Z)} \leqslant \alpha, \text{ then }{\| i^+_Z(Z) - i^+_Z(W) \|}_{Z} \leqslant Kd{(W,Z)},\\
&\text{2. If  } {\| i^+_Z(Z) - i^+_Z(W) \|}_{Z} \leqslant \alpha, \text{ then } d{(W,Z)} \leqslant K{\| i^+_Z(Z) - i^+_Z(W) \|}_{Z}.
\end{align*}
\end{proposition}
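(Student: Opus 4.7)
The plan is to exploit the $\Gamma$-invariance of both $d$ and $\|\cdot\|$ to reduce the statement to a compact fundamental domain, and then to compare both quantities to a common auxiliary Euclidean distance. The key computational observation is that for $Z=\mathtt{N}(g)$ and a point $W=(N(g)+s_1\nu^+(g),\nu(g)+s_2\nu^+(g))\in\tilde{\mathcal{L}}^+_Z$, we have on the one hand
\[
\|i^+_Z(W)-i^+_Z(Z)\|_Z=\sqrt{s_1^2+s_2^2}
\]
directly from the construction of $\|\cdot\|$ in proposition \ref{prelip}, and on the other hand, for any fixed Euclidean distance $d_E$ on $\mathbb{A}\times\mathbb{V}$,
\[
d_E(Z,W)=|\nu^+(g)|_E\sqrt{s_1^2+s_2^2}.
\]
Hence, along leaves of $\mathcal{L}^+$, the leaf-lift norm and the Euclidean distance differ only by the scalar factor $|\nu^+(g)|_E$.

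First I would fix a precompact fundamental domain $\overline{D}\subset\mathsf{U}_{\hbox{\tiny $\mathrm{rec}$}}\mathbb{A}$, as in the construction of $\tilde d$, and a reference Euclidean distance $d_E$. Since $\overline{D}$ is precompact and $\nu^+(g)\neq 0$ for every $g$, the continuous function $g\mapsto|\nu^+(g)|_E$ is bounded above and away from zero on the set of $g$ with $\mathtt{N}(g)\in\overline{D}$. This gives two positive constants $C_1,C_2$ with $C_1\leqslant|\nu^+(g)|_E\leqslant C_2$ there. Moreover the local bilipschitz equivalence between $d$ and $d_E$, together with the compactness of $\overline{D}$, yields by a standard finite subcover argument a uniform radius $\alpha_0>0$ and a constant $K_0$ such that whenever $Z\in\overline{D}$ and $\min(d(W,Z),d_E(W,Z))\leqslant\alpha_0$ one has $\tfrac{1}{K_0}d_E(W,Z)\leqslant d(W,Z)\leqslant K_0\,d_E(W,Z)$. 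Combining this with the identity above gives both inequalities of the proposition for $Z\in\overline{D}$, with constants depending only on $K_0,C_1,C_2$.

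Next I would transfer the statement to arbitrary $Z\in\mathsf{U}_{\hbox{\tiny $\mathrm{rec}$}}\mathbb{A}$ by writing $Z=\gamma Z_0$ with $Z_0\in\overline{D}$ and $\gamma\in\Gamma$. The distance $\tilde d$ is $\Gamma$-invariant, and $\|\cdot\|$ is $\Gamma$-invariant by its construction in proposition \ref{prelip}. The leaf lift is $\Gamma$-equivariant, because $\gamma\cdot\nu^+(g)=\nu^+(\gamma g)$, $\gamma N(g)=N(\gamma g)$ and $\gamma\nu(g)=\nu(\gamma g)$. Consequently
\[
d(W,Z)=d(\gamma^{-1}W,Z_0),\qquad \|i^+_Z(W)-i^+_Z(Z)\|_Z=\|i^+_{Z_0}(\gamma^{-1}W)-i^+_{Z_0}(Z_0)\|_{Z_0},
\]
and $\gamma^{-1}W\in\mathcal{L}^+_{Z_0}$. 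Applying the bound obtained on $\overline{D}$ to the pair $(Z_0,\gamma^{-1}W)$ then gives the desired bound for $(Z,W)$, with the same constants $K$ and $\alpha$.

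The main subtlety to watch for is that $d_E$ is only an auxiliary object and is \emph{not} $\Gamma$-invariant, so it may only be used while we are sitting over $\overline{D}$; the bridge to the general case must be built entirely out of the $\Gamma$-invariant objects $d$ and $\|\cdot\|$. A related nuisance is ensuring that the radius $\alpha$ is chosen small enough that $W$ (respectively its translate $\gamma^{-1}W$) actually falls within the neighborhood where the uniform bilipschitz comparison between $d$ and $d_E$ applies, but this is automatic since the hypotheses of both parts of the proposition control exactly the relevant quantity.
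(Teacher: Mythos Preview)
Your argument is correct and follows the same overall architecture as the paper: reduce by $\Gamma$-invariance to a compact fundamental domain, obtain uniform constants there via compactness and a finite subcover/Lebesgue number argument, and then transport back by equivariance.

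The implementation differs in one genuine way. The paper introduces, for each $Z$, the Euclidean distance $d_Z$ on $\mathsf{U}_{\hbox{\tiny $\mathrm{rec}$}}\mathbb{A}$ associated to the inner product $\|\cdot\|_Z$, observes that $d_Z(W,Z)=\|i^+_Z(W)-i^+_Z(Z)\|_Z$, and then first proves that the whole family $\{d_Z\}_{Z\in D}$ is mutually $C$-bilipschitz for a single constant $C$ (by continuity of the change-of-basis constants on the compact $D\times D$). Only afterwards does it run the finite-cover and Lebesgue-number argument to compare $d$ with each $d_Z$. You instead fix a single reference Euclidean distance $d_E$ and use the explicit identity $d_E(Z,W)=|\nu^+(g)|_E\sqrt{s_1^2+s_2^2}$ along a leaf (for a product-form $d_E$), so that the comparison between $d_E$ and $\|\cdot\|_Z$ reduces to bounding the continuous scalar $|\nu^+(g)|_E$ above and below on $\overline{D}$. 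This is a more direct route: it exploits the specific geometry of the leaves of $\mathcal{L}^+$ rather than treating $\|\cdot\|_Z$ as an abstract continuously varying metric. The paper's approach, by contrast, would work verbatim for any continuous $\Gamma$-invariant family of Euclidean norms, without needing the leaf to be a straight line in the ambient affine space. Either way the constants and radii obtained are uniform over $\overline{D}$ and hence, by $\Gamma$-invariance, over all of $\mathsf{U}_{\hbox{\tiny $\mathrm{rec}$}}\mathbb{A}$.
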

\begin{proof}
Since $\Gamma$ acts cocompactly on $\mathsf{U}_{\hbox{\tiny $\mathrm{rec}$}}\mathbb{A}$ and both $d$ and $\|.\|$ are $\Gamma$-invariant, it suffices to prove the above assertion for $Z$ in a compact subset $D$ of $\mathsf{U}_{\hbox{\tiny $\mathrm{rec}$}}\mathbb{A}$, where $D$ is the closure of a suitably chosen fundamental domain.\\
We can define an euclidean distance $d_Z$ on $\mathsf{U}_{\hbox{\tiny $\mathrm{rec}$}}\mathbb{A}$, uniquely using the euclidean metric $\|.\|_Z$ on $\mathbb{R}^3 \times \mathbb{R}^3$, by taking the embedding of $\mathsf{U}_{\hbox{\tiny $\mathrm{rec}$}}\mathbb{A}$ in  $\mathbb{A} \times \mathbb{R}^3$. We notice that for any  $Z$ in $\mathsf{U}_{\hbox{\tiny $\mathrm{rec}$}}\mathbb{A}$ and for any $W$ in $\mathcal{L}^+_{Z}$, $d_Z(W,Z)$ is equal to $\|i^+_Z(W)-i^+_Z(Z)\|_Z$. Now, any two euclidean distances are bilipschitz equivalent with each other and by our hypothesis, $d$ is locally bilipschitz equivant to an euclidean distance. Therefore, in particular, $d$ is locally bilipschitz equivalent with $d_Z$ for $Z$ in $D$, that is, there exist constants $K_Z$ depending on $Z$, and open sets $U_Z$ around $Z$, such that the distance $d_Z$ and $d$ are $K_Z$ bilipschitz equivalent with each other on $U_Z$.\\
Let $C_{(X,Y)}$ for any $X$ and $Y$ in $D$, be a constant such that the distance $d_X$ and $d_Y$ are $C_{(X,Y)}$ bilipschitz equivalent with each other. It follows from the construction of the norm $\|.\|$, as done in proposition \ref{prelip}, that we can choose the constants $C_{(X,Y)}$ in such a way that $C_{(X,Y)}$ vary continuously on $(X,Y)$. As $D$ is compact it follows that $C_{(X,Y)}$ is bounded above by some constant $C$. Hence, for all $X$ and $Y$ in $D$, $d_X$ and $d_Y$ are $C$ bilipschitz equivalent with each other.\\
Now, we consider the open cover of $D$ by the open sets $U_Z$. As $D$ is compact, there exist points $Z_1, Z_2,..., Z_n$ in $D$ such that $U_{Z_1}, U_{Z_2},..., U_{Z_n}$ covers $D$. Let $\beta$ be the Lebesgue number of this cover for the distance $d$ and $K_0$ be the maximum of $K_{Z_1}, K_{Z_2},..., K_{Z_n}$. Therefore, for any $Z$ in $D$, the open ball of radius $\beta$ around $Z$ for the metric $d$, denoted by $B_d(Z,\beta)$, lies inside $U_{Z_j}$ for some $j$ in $\lbrace 1,2,...,n \rbrace$. Hence, $d$ and $d_{Z_j}$ are $K_0$ bilipschitz equivalent with each other on $B_d(Z,\beta)$. As $d_Z$ and $d_{Z_j}$ are $C$ bilipschitz equivalent with each other, it follows that $d$ and $d_Z$ are $CK_0$ bilipschitz equivalent with each other on $B_d(Z,\beta)$. Moreover, we note that the constants $\beta$, $C$, $K_0$ and hence also $CK_0$, does not depend on $Z$. Therefore, $d$ and $d_Z$ are $CK_0$ bilipschitz equivalent with each other on $B_d(Z,\beta)$, for all $Z$ in $D$.\\
As any two distances $d_X$ and $d_Y$, for all $X$, $Y$ in $D$ are $C$ bilipschitz equivalent with each other. Without loss of generality we can choose a point $X$ in $D$ and consider the distance $d_X$. The note that the set $\{B_d(Z,\beta): Z \in D\}$ is an open cover of $D$. Let $\beta_1$ be a Lebesgue number for this cover for the metric space $(D,d_X)$. Therefore, the open ball $B_{d_X}(Y_1,\beta_1)$ for any $Y_1$ in $D$, lies inside an open ball $B_d(Y_2,\beta)$ for some point $Y_2$ in $D$. Now, as $d$ and $d_Z$ are $CK_0$ bilipschitz equivalent with each other on the ball $B_d(Z,\beta)$ for all $Z$ in $D$, it follows that $d$ and $d_X$ are $CK_0$ bilipschitz equivalent with each other on the ball $B_{d_X}(Y_2,\beta_1)$. As $Y_2$ was chosen arbitrarily we have that $d$ and $d_X$ are $CK_0$ bilipschitz equivalent with each other on the ball $B_{d_X}(Y,\beta_1)$, for all $Y$ in $D$.\\
Now, we know that $d_X$ and $d_Z$ are $C$ bilipschitz equivalent with each other. Therefore we get that $d$ and $d_Z$ are $CK_0$ bilipschitz equivalent with each other on the ball $B_{d_Z}(Y,\frac{\beta_1}{C})$, for all $Y$ in $D$. In particular one has, $d$ and $d_Z$ are $CK_0$ bilipschitz equivalent with each other on the ball $B_{d_Z}(Z,\frac{\beta_1}{C})$. Finally, set $\alpha$ to be $\min \lbrace \frac{\beta_1}{C}, \beta\rbrace$ and $K$ to be $CK_0$ to get that for any $Z$ in $\mathsf{U}_{\hbox{\tiny $\mathrm{rec}$}}\mathbb{A}$ and $W$ in $\mathcal{L}^+_\text{Z}$ we have,
\begin{align*}
&\text{1. If  }d{(W,Z)} \leqslant \alpha, \text{ then }{\| i^+_Z(Z) - i^+_Z(W) \|}_{Z} \leqslant Kd{(W,Z)},\\
&\text{2. If  } {\| i^+_Z(Z) - i^+_Z(W) \|}_{Z} \leqslant \alpha, \text{ then } d{(W,Z)} \leqslant K{\| i^+_Z(Z) - i^+_Z(W) \|}_{Z}.
\end{align*}.
\end{proof}

\begin{theorem}
Let $\mathcal{L}^{\pm}$ be two laminations on $\mathsf{U}_{\hbox{\tiny $\mathrm{rec}$}}\mathbb{A}$ as defined in definitions \ref{lam1}, \ref{lam2} and let $\tilde{d}$ be the $\Gamma$ invariant metric, as defined in definition \ref{dist}. Under these assumptions, for the metric $\tilde{d}$ on $\mathsf{U}_{\hbox{\tiny $\mathrm{rec}$}}\mathbb{A}$ we have that
\begin{align*}
&\text{1. }\mathcal{L}^{+} \text{ is contracted in the forward direction of the geodesic flow, and, }\\
&\text{2. }\mathcal{L}^{-} \text{ is contracted in the backward direction of the geodesic flow.}
\end{align*}
\end{theorem}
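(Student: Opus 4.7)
The plan is to derive contraction in the metric $\tilde{d}$ by sandwiching it, via Proposition \ref{lip}, between the infinitesimal norm $\|\cdot\|_Z$ on the leaf lift, and then invoking the uniform exponential contraction of that norm along the flow supplied by Proposition \ref{prelip}. I will treat the forward contraction of $\mathcal{L}^+$ in detail; the statement for $\mathcal{L}^-$ under the backward flow is obtained by the obvious sign-reversed analogues of both propositions, whose proofs are formally identical after swapping $\nu^+ \leftrightarrow \nu^-$ and $t \leftrightarrow -t$.

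Setup: let $K$ and $\alpha$ be the constants provided by Proposition \ref{lip}, with $K \geqslant 1$ without loss of generality. First choose an integer $n \geqslant 1$ large enough that both $K^2/2^n < 1/2$ and $K\alpha/2^n \leqslant \alpha$ hold, and set $T_0 \defeq t_n$ for the threshold $t_n$ coming from Proposition \ref{prelip}. For each $Z \in \mathsf{U}_{\hbox{\tiny $\mathrm{rec}$}}\mathbb{A}$ take the chart neighborhood $\mathcal{W}_Z$ to be the open $\tilde{d}$-ball of radius $\alpha/2$ around $Z$; by the triangle inequality any two points $Y, W \in \mathcal{W}_Z$ satisfy $\tilde{d}(Y, W) \leqslant \alpha$, which is the range of validity of Proposition \ref{lip}(1).

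Core estimate: given $Y, W \in \mathcal{W}_Z$ with $W \in \mathcal{L}^+_Y$, Proposition \ref{lip}(1) applied at basepoint $Y$ gives
\[
\|i^+_Y(W) - i^+_Y(Y)\|_Y \leqslant K \tilde{d}(Y, W).
\]
For $t > T_0$, Proposition \ref{prelip} applied at $Y$ yields
\[
\|i^+_{\tilde{\Phi}_t Y}(\tilde{\Phi}_t W) - i^+_{\tilde{\Phi}_t Y}(\tilde{\Phi}_t Y)\|_{\tilde{\Phi}_t Y} \leqslant \frac{1}{2^n}\|i^+_Y(W) - i^+_Y(Y)\|_Y \leqslant \frac{K}{2^n}\tilde{d}(Y, W) \leqslant \frac{K\alpha}{2^n} \leqslant \alpha.
\]
Hence Proposition \ref{lip}(2), applied at basepoint $\tilde{\Phi}_t Y$, gives
\[
\tilde{d}(\tilde{\Phi}_t Y, \tilde{\Phi}_t W) \leqslant K \cdot \frac{K}{2^n}\tilde{d}(Y, W) = \frac{K^2}{2^n}\tilde{d}(Y, W) < \frac{1}{2}\tilde{d}(Y, W),
\]
which is exactly the contraction property for $\mathcal{L}^+$, with the threshold $T_0$ and the radius $\alpha/2$ independent of $Y, W$ and $Z$.

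The uniformity of $K$ and $\alpha$ over $Z$ is automatic from the $\Gamma$-invariance of $\tilde{d}$ and $\|\cdot\|$ combined with compactness of $\mathsf{U}_{\hbox{\tiny $\mathrm{rec}$}}\mathsf{M}$, already used in the proof of Proposition \ref{lip}. I do not anticipate any serious obstacle beyond bookkeeping: the real contraction mechanism is encoded in Proposition \ref{prelip}, the metric/norm dictionary is encoded in Proposition \ref{lip}, and the argument above is their clean composition, the only care needed being the order in which the constants are fixed ($K, \alpha$ first, then $n$ satisfying the two inequalities above, then $T_0 \defeq t_n$). The statement then descends immediately to the quotient $\mathsf{U}_{\hbox{\tiny $\mathrm{rec}$}}\mathsf{M}$ via the laminations $\underline{\mathcal{L}}^{\pm}$ of Definition \ref{lem}, yielding Theorem \ref{mainthm1}.
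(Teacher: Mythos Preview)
Your proposal is correct and follows essentially the same approach as the paper: fix $K,\alpha$ from Proposition~\ref{lip}, choose $n$ so that $K/2^n\leqslant 1$ and $K^2/2^n\leqslant 1/2$, set $T_0=t_n$ from Proposition~\ref{prelip}, then chain Proposition~\ref{lip}(1), Proposition~\ref{prelip}, and Proposition~\ref{lip}(2) to obtain $\tilde d(\tilde\Phi_tY,\tilde\Phi_tW)\leqslant\tfrac{1}{2}\tilde d(Y,W)$. Your version is slightly more explicit in setting up the chart as a ball of radius $\alpha/2$ so that an arbitrary pair $Y,W$ in the chart satisfies $\tilde d(Y,W)\leqslant\alpha$, which matches the formal definition of contraction more directly than the paper's phrasing, but the substance is identical.
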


\begin{proof}
Let $\|.\|$ be the $\Gamma$-invariant map from $\mathsf{U}_{\hbox{\tiny $\mathrm{rec}$}}\mathbb{A}$ to the space of euclidean metrics on $\mathbb{R}^3 \times \mathbb{R}^3$ as constructed in the proof of proposition \ref{prelip} and let $K$ and $\alpha$ be as in the proposition \ref{lip} for the distance $\tilde{d}$. We choose a positive integer $n$ such that 
\begin{align*}
\frac{K}{2^n} < 1 \text{ , } \frac{K^2}{2^n} \leqslant \frac{1}{2}.
\end{align*}
Let $t_n$ be the constant as in proposition \ref{prelip} for our chosen $n$. Also let $Z$ be in $\mathsf{U}_{\hbox{\tiny $\mathrm{rec}$}}\mathbb{A}$ and $W$ be in $\mathcal{L}^+_{Z}$, so that $\tilde{d}(W,Z) \leqslant \alpha$. Then using proposition \ref{lip} we get
\begin{align*}
{\| i^+_Z(W) - i^+_Z(Z) \|}_{Z} \leqslant K\tilde{d}{(W,Z)}.
\end{align*}
Furthermore, using proposition \ref{prelip} we get for all $t>t_n$ that
\begin{align*}
&{\| i^+_{\tilde{\Phi}_t Z}(\tilde{\Phi}_t W) - i^+_{\tilde{\Phi}_t Z}(\tilde{\Phi}_t Z) \|}_{\tilde{\Phi}_t Z} \leqslant \frac{1}{2^n}{\| i^+_Z(W) - i^+_Z(Z)\|}_Z.
\end{align*}
It follows that
\begin{align*}
{\| i^+_{\tilde{\Phi}_t Z}(\tilde{\Phi}_t W) - i^+_{\tilde{\Phi}_t Z}(\tilde{\Phi}_t Z) \|}_{\tilde{\Phi}_t Z} \leqslant \frac{K\alpha}{2^n} \leqslant \alpha.
\end{align*}
Hence again using proposition \ref{lip} we have
\begin{align*}
\tilde{d}(\tilde{\Phi}_t W, \tilde{\Phi}_t Z) \leqslant K {\| i^+_{\tilde{\Phi}_t Z}(\tilde{\Phi}_t W) - i^+_{\tilde{\Phi}_t Z}(\tilde{\Phi}_t Z) \|}_{\tilde{\Phi}_t Z}.
\end{align*}
Combining the above inequalities, for all $t>t_n$ we get
\begin{align} \label{convergence}
\tilde{d}(\tilde{\Phi}_t W, \tilde{\Phi}_t Z) &\leqslant \frac{K^2}{2^n} \tilde{d}{(W,Z)} \leqslant \frac{1}{2}\ \tilde{d}{(W,Z)}.
\end{align}
Hence $\mathcal{L}^+$ is contracted in the forward direction of the geodesic flow. The proof of the contraction of $\mathcal{L}^-$ follows similarly.
\end{proof}

\subsection{Metric Anosov structure on the quotient}

Let us now consider what happens in the quotient, that is, $\mathsf{U}_{\hbox{\tiny $\mathrm{rec}$}}\mathsf{M}$. Let $Z$ be in $\mathsf{U}_{\hbox{\tiny $\mathrm{rec}$}}\mathbb{A}$ and $\epsilon$ be a positive real number. We define, 
\begin{align*}
\mathcal{L}^{\pm}_\epsilon(Z) \defeq \mathcal{L}^{\pm}_Z \cap B_{\tilde{d}}(Z, \epsilon),
\end{align*}
and
\begin{align*}
\mathcal{K}_\epsilon(Z) \defeq {\Pi}_{Z} \left(\mathcal{L}^+_\epsilon(Z)\times \mathcal{L}^-_\epsilon(Z) \times (-\epsilon,\epsilon)\right) \subset \mathsf{U}_{\hbox{\tiny $\mathrm{rec}$}}\mathbb{A}
\end{align*}
where ${\Pi}_{Z}$ is the local product structure at $Z$ defined by the stable and unstable leaves.

We know that there exists a positive real number $\epsilon_0$ such that for any non identity element $\gamma$ of $\Gamma$ and for $Z$ in $\mathsf{U}_{\hbox{\tiny $\mathrm{rec}$}}\mathbb{A}$ we have,
\begin{align*}
\gamma (\mathcal{K}_{\epsilon_0}(Z)) \cap \mathcal{K}_{\epsilon_0}(Z) = \emptyset .
\end{align*}
\begin{proof}[Proof of Theorem \ref{mainthm1}]
Let us fix $\alpha$ as in proposition \ref{lip} and let $\epsilon_1$ be from the open interval $\left(0,\min\left\lbrace\alpha,\frac{\epsilon_0}{2}\right\rbrace\right)$. Now let $z$ be any point of $\mathsf{U}_{\hbox{\tiny $\mathrm{rec}$}}\mathsf{M}$ and let $Z$ be a point in $\mathsf{U}_{\hbox{\tiny $\mathrm{rec}$}}\mathbb{A}$ in the preimage of $z$. Our choice of $\epsilon_1$ gives us that the inequality \ref{convergence} holds for the geodesic flow on $\mathsf{U}_{\hbox{\tiny $\mathrm{rec}$}}\mathbb{A}$ for the points in the chart $\mathcal{K}_{\epsilon_1}(Z)$. Hence the inequality \ref{convergence} also holds for the geodesic flow on $\mathsf{U}_{\hbox{\tiny $\mathrm{rec}$}}\mathsf{M}$ for points in the chart which is in the projection of $\mathcal{K}_{\epsilon_1}(Z)$.

Therefore $\underline{\mathcal{L}}^+$, the projection of $\mathcal{L}^{+}$ in $\mathsf{U}_{\hbox{\tiny $\mathrm{rec}$}}\mathsf{M}$, is contracted in the forward direction of the geodesic flow on $\mathsf{U}_{\hbox{\tiny $\mathrm{rec}$}}\mathsf{M}$. A similar proof holds for $\underline{\mathcal{L}}^-$, the projection of $\mathcal{L}^{-}$ in $\mathsf{U}_{\hbox{\tiny $\mathrm{rec}$}}\mathsf{M}$, too.
\end{proof}

\section{Anosov representations}

In this section we define the notion of an Anosov representation in the context of the non-semisimple Lie group $\mathsf{G} \defeq \mathsf{SO}^0(2,1)\ltimes\mathbb{R}^3$. 

\subsection{Pseudo-Parabolic subgroups}

Let $\mathbb{X}$ be the space of all affine null planes. We observe that $\mathsf{G}$ acts transitively on $\mathbb{X}$. Hence for all $P\in\mathbb{X}$ we have
\[\mathbb{X} = \mathsf{G}.P \cong \mathsf{G}/\mathsf{Stab}_{\mathsf{G}}(P).\]
\begin{definition}\label{levi}
If $P\in\mathbb{X}$ then we define
\[\mathsf{P}_P\defeq\mathsf{Stab}_{\mathsf{G}}(P).\]
We call $\mathsf{P}_P$ a $\textit{pseudo-parabolic}$ subgroup of $\mathsf{G}$.
\end{definition}
Let $\mathsf{V}(P)$ denote the vector space underlying a null plane $P$, let $v_0\defeq(1,0,0)^t$ and $v_0^\pm\defeq(0,\pm1,1)^t$ and let $\mathcal{C}$ be the upper half of $\mathsf{S}^0\backslash\{0\}$. Now we consider the space
\[\mathcal{N}\defeq\{(P_1,P_2)\mid P_1,P_2\in\mathbb{X}, \mathsf{V}(P_1)\neq \mathsf{V}(P_2)\}\]
and define the following map
\begin{align*}
v : \mathcal{N} &\longrightarrow \mathsf{S}^{1}\\
(P_1,P_2) &\longmapsto v(P_1,P_2)
\end{align*}
where $v(P_1,P_2)\in\mathsf{V}(P_1)\cap\mathsf{V}(P_2)\cap\mathsf{S}^{1}$ is such that if $v(Q_1)\in\mathsf{V}(Q_1)\cap\mathcal{C}$ and $v(Q_2)\in\mathsf{V}(Q_2)\cap\mathcal{C}$ then $(v(Q_1),v(Q_1,Q_2),v(Q_2))$ gives the same orientation as $(v_0^+,v_0,v_0^-)$. We observe that
\[v(P_1,P_2)=-v(P_2,P_1).\]

\begin{proposition}\label{open}
The space $\mathcal{N}$ is the unique open $\mathsf{G}$ orbit in $\mathbb{X}\times\mathbb{X}$ for the diagonal action of $\mathsf{G}$ on $\mathbb{X}\times\mathbb{X}$.
\end{proposition}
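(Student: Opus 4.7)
The plan is to establish the statement in three steps: first that $\mathcal{N}$ is open and $\mathsf{G}$-invariant, second that $\mathsf{G}$ acts transitively on $\mathcal{N}$, and third that no $\mathsf{G}$-orbit lying entirely in the complement can be open.

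For the first step, the assignment $P\mapsto\mathsf{V}(P)$ from $\mathbb{X}$ to the space of linear null $2$-planes in $\mathbb{R}^{2,1}$ is continuous and $\mathsf{G}$-equivariant, since the translation subgroup $\mathbb{R}^3$ acts trivially on underlying vector spaces while $\mathsf{SO}^0(2,1)$ acts in the usual way. Thus the condition $\mathsf{V}(P_1)\neq\mathsf{V}(P_2)$ cuts out an open, $\mathsf{G}$-invariant subset, which is precisely $\mathcal{N}$.

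For the second step, I would use the $\mathsf{SO}^0(2,1)$-equivariant bijection between null $2$-planes and null lines in $\mathbb{R}^{2,1}$: each null $2$-plane $V$ has a unique $1$-dimensional null radical $\ell\subset V$, and $V=\ell^\perp$. Null lines correspond to points of $\partial\mathbb{H}\cong\mathsf{S}^1$, on which $\mathsf{SO}^0(2,1)\cong\mathsf{PSL}(2,\mathbb{R})$ acts $2$-transitively. Hence, given two pairs $(P_1,P_2),(Q_1,Q_2)\in\mathcal{N}$ with $P_i=p_i+\mathsf{V}(P_i)$ and $Q_i=q_i+\mathsf{V}(Q_i)$, one can choose $L\in\mathsf{SO}^0(2,1)$ such that $L\mathsf{V}(P_i)=\mathsf{V}(Q_i)$ for $i=1,2$. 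Setting $W_i\defeq\mathsf{V}(Q_i)$, it then remains to find a translation $t\in\mathbb{R}^3$ such that $Lp_i+t-q_i\in W_i$, equivalently
\[
t\in\bigl((q_1-Lp_1)+W_1\bigr)\cap\bigl((q_2-Lp_2)+W_2\bigr).
\]
Since $W_1\neq W_2$ are distinct $2$-dimensional subspaces of $\mathbb{R}^3$ we have $W_1+W_2=\mathbb{R}^3$, so the two non-parallel affine $2$-planes on the right-hand side intersect in a nonempty affine line; any such $t$ produces the required $g=(L,t)\in\mathsf{G}$.

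For the third step, $\mathbb{X}$ fibers over the circle of null lines via $P\mapsto\mathsf{V}(P)$ with $1$-dimensional affine fibers (the transverse translations), so $\dim\mathbb{X}=2$ and $\dim(\mathbb{X}\times\mathbb{X})=4$. The complement $\mathbb{X}\times\mathbb{X}\setminus\mathcal{N}$, consisting of pairs sharing a common underlying null plane, is parametrized by one null direction and two transverse translation parameters and thus has dimension $3$. Since $\mathcal{N}$ is a single $\mathsf{G}$-orbit by step two and no orbit contained in the $3$-dimensional complement can be open in $\mathbb{X}\times\mathbb{X}$, $\mathcal{N}$ is the unique open orbit. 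The main subtlety I anticipate is step two — organizing the bijection between null planes and null lines cleanly and separating the $\mathsf{SO}^0(2,1)$-adjustment from the translation adjustment — but once that is done the argument reduces to $2$-transitivity of $\mathsf{PSL}(2,\mathbb{R})$ on $\partial\mathbb{H}$ together with the elementary identity $W_1+W_2=\mathbb{R}^3$.
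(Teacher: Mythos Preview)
Your proof is correct and follows essentially the same strategy as the paper: show $\mathcal{N}$ is open, establish transitivity by first using $\mathsf{SO}^0(2,1)$ to align the linear data and then a translation to align the affine data, and deduce uniqueness from the smallness of the complement. The paper phrases the linear step via transitivity of $\mathsf{SO}^0(2,1)$ on the spacelike unit sphere $\mathsf{S}^1$ (through the map $v$ and a chosen point on $P_1\cap P_2$) rather than via $2$-transitivity on null lines plus the identity $W_1+W_2=\mathbb{R}^3$, and it deduces uniqueness from density of $\mathcal{N}$ rather than a dimension count, but these are equivalent viewpoints.
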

\begin{proof}
We start by observing that $\mathcal{N}$ is open and dense in $\mathbb{X}\times\mathbb{X}$.
Now let $(P_1,P_2)$ and $(Q_1,Q_2)$ be two arbitrary points in $\mathcal{N}$. We consider the vector $v(P_1,P_2)\in\mathsf{S}^{1}$ corresponding to the point $(P_1,P_2)$ and the vector $v(Q_1,Q_2)\in\mathsf{S}^{1}$ corresponding to the point $(Q_1,Q_2)$. Now as $\mathsf{SO}^0(2,1)$ acts transitively on $\mathsf{S}^{1}$ we get that there exist $g\in\mathsf{SO}^0(2,1)$ such that 
\[v(Q_1,Q_2)=g.v(P_1,P_2).\]
We choose $X(Q_1,Q_2)\in Q_1\cap Q_2$ and $X(P_1,P_2)\in P_1\cap P_2$ and observe that 
\[(e,X(Q_1,Q_2)-O)\circ(g,0)\circ(e,X(P_1,P_2)-O)^{-1}.P_1=Q_1,\]
\[(e,X(Q_1,Q_2)-O)\circ(g,0)\circ(e,X(P_1,P_2)-O)^{-1}.P_2=Q_2,\]
where $e$ is the identity element in $\mathsf{SO}^0(2,1)$. Therefore $\mathcal{N}$ is an open $\mathsf{G}$ orbit in $\mathbb{X}\times\mathbb{X}$. Now as $\mathbb{X}\times\mathbb{X}$ is connected the result follows.
\end{proof}

Let $\mathsf{N}$ be the space of oriented space like affine lines. We think of $\mathsf{N}$ as the space $\mathsf{U}\mathbb{A}/\sim$ where $(X,v)\sim(X_1,v_1)$ if and only if $(X_1,v_1)=\tilde{\Phi}_t(X,v)$ for some $t\in\mathbb{R}$. We denote the equivalence class of $(X,v)$ by $[(X,v)]$. Now let us consider the following map
\begin{align*}
\imath^\prime : \mathcal{N} &\longrightarrow \mathsf{N}\\
(P_1,P_2) &\longmapsto [(X(P_1,P_2),v(P_1,P_2))]
\end{align*}
where $X(P_1,P_2)$ is any point in $P_1\cap P_2$. We observe that $\imath^\prime$ gives a $\mathsf{G}$ equivariant map.

Let us denote the plane passing through $X$ with underlying vector space generated by the vectors $w_1$ and $w_2$ by $P_{X,w_1,w_2}$. Now we consider another map
\begin{align*}
\imath : \mathsf{U}\mathbb{A} &\longrightarrow \mathcal{N}\\
(X,v) &\longmapsto (P_{X,v,v^+},P_{X,v,v^-})
\end{align*}
where $v^\pm\in\mathcal{C}$ such that $\langle v^\pm\mid v\rangle = 0$ and $(v^+,v,v^-)$ gives the same orientation as $(v_0^+,v_0,v_0^-)$. We observe that $\imath$ is a $\mathsf{G}$ equivariant map. Now as $P_{X+tv,v,v^+}=P_{X,v,v^+}$ and $P_{X+tv,v,v^-}=P_{X,v,v^-}$ we get that the map $\imath$ gives rise to a map, which we again denote by $\imath$,
\[\imath :\mathsf{N} \longrightarrow \mathcal{N}.\] 
Moreover, we observe that $\imath\circ\imath^\prime=\mathsf{Id}$ and $\imath^\prime\circ\imath=\mathsf{Id}$.

\subsection{Geometric Anosov structure}

Geometric Anosov structures were first intoduced by Labourie in \cite{orilab}. In this subsection we give an appropriate definition of geometric Anosov property and show that $(\mathsf{U}_{\hbox{\tiny $\mathrm{rec}$}}\mathsf{M},\mathcal{L})$ admits a geometric Anosov structure.

Let $(P^+,P^-)\in\mathcal{N}$ such that $P^+\defeq P_{O,v_0,v_0^+}$ and $P^-\defeq P_{O,v_0,v_0^-}$. We denote $\mathsf{Stab}_\mathsf{G}(P^\pm)$ respectively by $\mathsf{P}^\pm$.
We note that the pair $\mathbb{X}^\pm\defeq\mathsf{G}/\mathsf{P}^\pm$ gives a pair of continuous foliations on the space $\mathsf{N}$ whose tangential distributions $\mathsf{E}^\pm$ satisfy
\[\mathsf{TN} = \mathsf{E}^+\oplus\mathsf{E}^-.\]
\begin{definition}
We say that a vector bundle $\mathsf{E}$ over a compact topological space whose total space is equipped with a flow $\{\varphi_t\}_{t\in\mathbb{R}}$ of bundle automorphisms is $\textit{contracted}$ by the flow as $t\to\infty$ if for any metric $\|.\|$ on $\mathsf{E}$, there exists positive constants $t_0$, $A$ and $c$ such that for all $t>t_0$ and for all $v$ in $\mathsf{E}$ we have
\begin{align*}
\|\varphi_t(v)\| \leqslant Ae^{-ct}\|v\|.
\end{align*}
\end{definition}
\begin{definition}
Let $\mathcal{L}$ denote the orbit foliation of $\mathsf{U}_{\hbox{\tiny $\mathrm{rec}$}}\mathsf{M}$ under the flow $\Phi$.
We say that $(\mathsf{U}_{\hbox{\tiny $\mathrm{rec}$}}\mathsf{M},\mathcal{L})$ admits a $\textit{geometric}$ $(\mathsf{N}, \mathsf{G})$-$\textit{Anosov structure}$ if there exist a map
\[F: \widetilde{\mathsf{U}_{\hbox{\tiny $\mathrm{rec}$}}\mathsf{M}}\longrightarrow\mathsf{N}\]
such that the following holds:
\begin{enumerate}
\item For all $\gamma\in\Gamma$ we have $F\circ\gamma=\gamma\circ F$,
\item For all $t\in\mathbb{R}$ we have $F\circ\tilde{\Phi}_t=F$,
\item By the flow invariance, the bundles $F^\pm\defeq F^*\mathsf{E}^\pm$ are equipped with a parallel transport along the orbits of $\tilde{\Phi}$. The bundle $F^+$ gets contracted by the lift of the flow $\tilde{\Phi}_t$ as $t\to\infty$ and $F^-$ gets contracted by the lift of the flow $\tilde{\Phi}_t$ as $t\to-\infty$.
\end{enumerate} 
\end{definition}

\begin{proof}[Proof of Theorem \ref{geomano}]
Let us define the map $F$ as follows:
\begin{align*}
F: \widetilde{\mathsf{U}_{\hbox{\tiny $\mathrm{rec}$}}\mathsf{M}} &\longrightarrow\mathsf{N}\\
(X,v) &\longmapsto [(X,v)]
\end{align*}
We note that the map $F$ is clearly $\Gamma$-equivariant and is also invariant under the flow $\tilde{\Phi}$. Now we observe that 
\[\mathsf{T}_{\imath\left([(X,v)]\right)}\mathsf{G}/\mathsf{P}^-\cong\mathbb{R}.v^+\oplus\mathbb{R}.v^+\]
and 
\[\mathsf{T}_{\imath\left([(X,v)]\right)}\mathsf{G}/\mathsf{P}^+\cong\mathbb{R}.v^-\oplus\mathbb{R}.v^-\]
where $v^+,v^-\in\mathcal{C}$ such that $\langle v^\pm\mid v\rangle=0$ and $(v^+,v,v^-)$ gives the same orientation as $(v_0^+,v_0,v_0^-)$.

Now using proposition \ref{prelip} we notice that $F^+$ gets contracted by the lift of the flow $\tilde{\Phi}_t$ as $t\to\infty$ and $F^-$ gets contracted by the lift of the flow $\tilde{\Phi}_t$ as $t\to-\infty$. Moreover, as $\mathsf{U}_{\hbox{\tiny $\mathrm{rec}$}}\mathsf{M}$ is compact we have that the convergence is independent of the choice of the metric.
\end{proof}

\end{document}